\documentclass[12pt, a4paper]{article}

\usepackage{a4,amsmath,amssymb, amsthm, latexsym, color, graphicx,url}
\usepackage{amsfonts}
\usepackage{tikz}
\usepackage{float}
\usepackage{psfrag}
\usepackage{verbatim}
\usetikzlibrary{arrows.meta}
\usetikzlibrary{decorations.markings}
\usetikzlibrary{calc}

\usepackage{authblk}
\usepackage{fullpage}
\usepackage{enumitem}
\usepackage{xcolor}
\newtheorem{theorem}{Theorem}[section]
\newtheorem{proposition}[theorem]{Proposition}
\newtheorem{lemma}[theorem]{Lemma}
\newtheorem{corollary}[theorem]{Corollary}

\theoremstyle{definition}
\newtheorem{example}[theorem]{Example}
\newtheorem{definition}[theorem]{Definition}
\newtheorem{remark}[theorem]{Remark}

\newtheorem{conjecture}[theorem]{Conjecture}

\newtheorem*{TP}{Tiling Problem}

\title{On optimal $(v,5,3,1)$ optical orthogonal codes}
\author[1]{Laura Johnson}
\author[2]{Lorenzo Mella}
\author[3]{Anita Pasotti}
\author[4]{Emma Smith}
\affil[1]{School of Mathematics, University of Bristol, Bristol, BS8 1UG, UK, \texttt{laura.marie.johnson@bristol.ac.uk}}
\affil[2]{Dip. di Scienze Fisiche, Informatiche, Matematiche, Universit\`a degli Studi di Modena e Reggio Emilia, Via Campi 213/A, I-41125 Modena, Italy, \texttt{lorenzo.mella@unipr.it}}
\affil[3]{DICATAM - Sez. Matematica, Universit\`a degli Studi di Brescia, Via Branze 43, I-25123 Brescia, Italy, \texttt{anita.pasotti@unibs.it}}
\affil[4]{Information Security Group, Royal Holloway, University of London, Egham, Surrey, TW20 0EX, UK, \texttt{Emma.Smith.2020@live.rhul.ac.uk}}
\begin{document}
\maketitle

\begin{abstract}
In this paper, we establish a tight upper bound for the size of a $(v,5,3,1)$ optical orthogonal code (OOC) and call such a code optimal when it achieves this upper bound. In addition, if an optimal OOC has the property that all non-identity elements of $\mathbb{Z}_v$ are covered by the list of differences of its codeword-sets then it is called perfect. In this paper, we present direct and recursive constructions for infinite classes of  optimal and perfect $(v,5,3,1)$-OOCs. 
\end{abstract}

\section{Introduction}
Optical orthogonal codes (OOCs, for short) were originally investigated in the context of multi-access communication systems \cite{BW,CSW}. 
Although OOCs are usually defined as a collection of binary sequences, for the purpose of this paper we prefer to adopt an equivalent set notation. A $(v,k,\lambda_a,\lambda_c)$-OOC is a collection $\mathcal{C} = \{C_1,\ldots,C_s\}$ of $k$-subsets of $\mathbb{Z}_v$ (called \textit{codeword-sets}) satisfying the following conditions: 
\begin{gather}
   |C_i \cap (C_i+t)| \leq \lambda_a \,\ \ {\rm for}\,\ 1 \leq i \leq s \,\ \ {\rm and}\,\ \ 1 \leq t \leq v-1; \label{AutoProp}\\
  |C_i \cap (C_j+t)| \leq \lambda_c \,\ \ {\rm for}\,\  1 \leq i < j \leq s \,\ \ {\rm and}\,\ \ 0 \leq t \leq v-1. \label{CrossProp}
\end{gather}
Conditions (\ref{AutoProp}) and (\ref{CrossProp}) are referred to as the \emph{autocorrelation property} and the \emph{cross-correlation property}, respectively. The size $k$ of the subsets is also called the \emph{weight} of the codeword-sets. The integer $s$, namely the number of codeword-sets in $\mathcal{C}$, is called the \emph{size} of the code.
A $(v,k,\lambda_a,\lambda_c)$-OOC with $\lambda_a=\lambda_c=\lambda$ is simply denoted as a $(v,k,\lambda)$-OOC.
The most investigated class of OOCs is that of $(v,k,1)$-OOCs with large size. This interest is motivated both by their applications and by their strong connections with difference families, see for example \cite{AB1,B,BP2,CZ,CZ2,W}. Several results have also been obtained for $(v,k,2)$-OOCs; see, for instance, \cite{AM,CC,FCJ}.

Despite the considerable attention devoted to  $(v,k,\lambda)$-OOCs, the case in which the two correlation parameters are distinct is equally worthy of investigation.  From both an applied and a theoretical viewpoint, the most significant and challenging situation arises when the two correlation parameters are small. Indeed, low autocorrelation enables correct decoding in an asynchronous communication system,  whilst low cross-correlation prevents collisions between users; furthermore, maximising the number of codeword-sets means that more users can use a given bandwidth. For these reasons, many papers have focused on $(v,k,\lambda_a,1)$-OOCs of large size. In \cite{YF}, it is shown  that the size of a $(v,k,\lambda_a,1)$-OOC, with $\lambda_a\neq 1$, cannot exceed 
$\lambda_a \lfloor \frac{v-1}{k} \rfloor$ but this bound is, in general, far from being tight. Moreover, we recall that a $(v,k,k-1,1)$-OOC is nothing but a conflict-avoiding code, introduced in \cite{TR} and subsequently investigated in many papers. It was therefore natural to focus on  $(v,4,2,1)$-OOCs, see \cite{BT12,BMP,MB,WC}, and $(v,5,2,1)$-OOCs, see \cite{BT13,BPW}. In particular, a sharp upper bound for the size of a $(v,4,2,1)$-OOC and a $(v,5,2,1)$-OOC has been established of being either the ceiling or the floor of $\frac{v}{8}$, see \cite{MB}, and being either the ceiling or the floor of $\frac{v}{12}$, see \cite{BPW}, respectively. In both cases, the authors say that the code is \emph{optimal} if its size attains the corresponding upper bound.

Motivated by these results, we believe it is natural to consider $(v,5,3,1)$-OOCs and to determine a sharp upper bound on their size. In the next section, after introducing the necessary notation and terminology, we prove that this bound is either the ceiling or the floor of $\frac{v}{10}$ and, consistently with the previous terminology, we say that a $(v,5,3,1)$-OOC is optimal if it achieves this bound. Then, in Section \ref{sec:graph}, we describe optimal codes in terms of suitable graph labellings establishing a connection with difference families over graphs. The fourth section is devoted to direct constructions for optimal and perfect $(v,5,3,1)$-OOCs, while in Section \ref{sec:recursive}, we propose a recursive construction.

\section{A bound on the maximum possible size of a $(v,5,3,1)$-OOC}\label{sec:bound}

In this section, we establish a tight upper bound on the size of a $(v,5,3,1)$-OOC. 
Firstly, we introduce some basic definitions and terminology. Given a subset $C$ of $\mathbb{Z}_v$,
by $\Delta C$ we denote the \emph{list of differences} of $C$, namely the multiset containing the differences $x-y$
for any ordered pair $(x,y)$ of distinct elements of $C$. Using the terminology and notation introduced in \cite{BPW}, it is often useful to consider the underlying set of $\Delta C$, also called the \emph{support}, denoted by $\bar \Delta C$. If $|\bar \Delta C|=\delta$, we say that $C$ is of \emph{type} $t(C)=\delta$. Obviously, if $|C|=k$ then $t(C)\leq |\Delta C|=k(k-1)$. Moreover, we denote by $\mu(C)$ the maximum multiplicity in $\Delta C$, more formally:
$$\mu(C)=\max \{\mu_{\Delta C}(i) \mid 1\leq i \leq v-1\}$$
where by $\mu_{\Delta C}(i)$ we mean the multiplicity of $i$ in $\Delta C$.  \\
Given a set $\mathcal{C}$ of subsets of $\mathbb{Z}_v$, we define the multiset $\bar \Delta \mathcal{C}=\bigcup_{C\in \mathcal{C}}\bar \Delta C$ to be the union of the supports of all subsets of $\mathcal C$.
If for any $C_i,C_j \in\mathcal C$, with $C_i\neq C_j$, we have $\Delta C_i \cap \Delta C_j =\emptyset$, one says that $\mathcal{C}$ is \emph{difference-disjoint}. Clearly, in this case, the multiset $\bar \Delta \mathcal{C}$ is a set. In order to understand if a subset $C$ of $\mathbb{Z}_v$ could be a codeword-set of a $(v,k,\lambda_a,1)$-OOC, we need to 
check the multiplicity of each element of $\mathbb{Z}_v$ in $\Delta C$. 

It is easy to see that, using the above notation, Conditions (\ref{AutoProp}) and (\ref{CrossProp}) can be written in the following equivalent way, which is useful to establish the bound on the size of the code.

Condition (\ref{AutoProp}) is equivalent to requiring that no element of $\mathbb{Z}_v$ appears more than $\lambda_a$ times in the list of differences of any codeword-set, namely:
\begin{gather}
\mu(C_i)\leq \lambda_a\ {\rm for}\  1 \leq i \leq s . \label{AutoPropBis}
\end{gather}

Also, if $\lambda_c=1$, such in the case under investigation, Condition (\ref{CrossProp}) is equivalent to asking that $\mathcal{C}$ is difference-disjoint, namely that 
\begin{gather}
\Delta C_i \cap \Delta C_j =\emptyset\ {\rm for}\ 1 \leq i<j \leq s. \label{CrossPropBis}
\end{gather}

Now, in order to establish the bound, we will investigate the possible types that each $C \in \mathcal{C}$ can take.
Since $k =5$, the multiset $\Delta C$ contains $20$ elements. As $\lambda_a = 3$, we want each element of $\mathbb{Z}_v$ to occur at most $3$ times in $\Delta C$, and therefore, by a basic pigeonhole argument, the type of each set $C$ must be at least $7$.

As already remarked in \cite{BMP}, a codeword-set $C$ of a $(v,k,\lambda_a,1)$-OOC $\mathcal{C}$ is of odd type if and only if $v$
is even and $\frac{v}{2}\in \Delta C$. Therefore, it is immediate that it is not restrictive to assume that a codeword-set of odd type contains $0$ and $\frac{v}{2}$. Furthermore, since the lists of differences of distinct codeword-sets of $\mathcal{C}$ should be disjoint,
$\mathcal{C}$ may have at most one codeword-set of odd type.
Using the same terminology introduced in \cite{BPW}, we say that two subsets $C$ and $C'$ of $\mathbb{Z}_v$ are \emph{equivalent} if we have
$C'=m\cdot C+t$ for a suitable pair $(m,t)$ with $\gcd(m,v)=1$. By means of long and tedious calculations one can check that the following lemma holds. Here we give only a sketch of the proof.

\begin{lemma} \label{lem:type7}
Let $C$ be a $5$-subset of $\mathbb{Z}_v$. Then $\mu(C)\leq 3$ and $|\bar{\Delta} C|=7$ if and only if 
$$v\equiv 0 \pmod 8\ \textrm{and, up to equivalence,} \ C=\left\{0,\frac{v}{8},-\frac{v}{8},\frac{v}{4},\frac{v}{2}\right\}.$$
\end{lemma}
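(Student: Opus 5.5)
The "if" direction is a direct check. Put $a=\frac{v}{8}$, so $C=\{0,a,-a,2a,4a\}$ lives in the cyclic subgroup $\langle a\rangle\cong\mathbb{Z}_8$. Write the $20$ differences as multiples of $a$ modulo $8$. They are $\pm1$ three times each, $\pm2$ three times each, $\pm3$ three times each, and $4$ twice. So the support is $\{\pm a,\pm2a,\pm3a,4a\}$, the type is $7$, and the maximum multiplicity is $3$. Since equivalence $C\mapsto mC+t$ with $\gcd(m,v)=1$ preserves both type and multiplicity profile, the whole class works.

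For the "only if" direction I would use counting. There are $20$ differences, each element of $\bar\Delta C$ has multiplicity at most $3$, and $|\bar\Delta C|=7$. The elements of $\bar\Delta C$ occur in pairs $\{d,-d\}$ with equal multiplicity, and $7$ is odd. Hence (as recalled from \cite{BMP}) $v$ is even and $\frac v2\in\bar\Delta C$. The other six elements form three pairs $\pm d_1,\pm d_2,\pm d_3$ of multiplicity at most $3$, contributing at most $18$. The element $\frac v2$ has even multiplicity, since its differences come in pairs $x-y,\,y-x$, and that multiplicity is at most $3$, so it is $2$ (it cannot be $0$ because it lies in the support). Therefore each $\pm d_i$ has multiplicity exactly $3$. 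Translating, we may assume $0,\frac v2\in C$. Since $\frac v2$ has multiplicity exactly $2$, $\{0,\frac v2\}$ is the only pair in $C$ at distance $\frac v2$.

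Next I would exploit the multiplicity-$3$ condition. For a difference $d$ with multiplicity $3$, the directed graph on $C$ with arcs $x\to x+d$ has exactly $3$ arcs. It is a union of paths and cycles, and a cycle would force $d$ to have small order $r$ with an $r$-cycle in $C$. Let $C=\{0,\frac v2,x,y,z\}$. Among the $10$ unordered pairs, one gives $\frac v2$, and the other $9$ split into three classes of $3$, one for each $\pm d_i$. I would run a case analysis on how the pairs $\{0,x\}$ and $\{\frac v2,x+\frac v2\}$ interact. Note that if $x\in C$ then $x-\frac v2=x+\frac v2$ is a difference, so $x+\frac v2\equiv$ some $\pm d_j$. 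The aim is to derive linear relations among $x,y,z$ modulo $v$ and show that they force $x,y,z\in\langle\frac v8\rangle$ with $8\mid v$. Once everything lies in the subgroup of order $8$, the problem reduces to $5$-subsets of $\mathbb{Z}_8$ containing $0,4$ with all seven nonzero differences. A short enumeration up to multipliers $\{1,3,5,7\}$ and translation leaves only $\{0,1,7,2,4\}$, up to equivalence; for instance, $\{0,1,2,3,4\}$ fails because $4$ then appears twice but $1$ appears four times.

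The main obstacle is the case analysis showing that all elements lie in $\langle\frac v8\rangle$. I would organise it by the arc structure of the three difference graphs (each a path of length $3$, a path of length $2$ plus an arc, three disjoint arcs, or a $3$-cycle), using the fact that each of the $9$ non-antipodal pairs is used exactly once. I would first try to show that one $d_i$ satisfies $2d_i=\frac v2$, which forces order $4$ and then propagates to order $8$. This is the "long and tedious" part the authors allude to, and in practice I would partly verify it by a computer search over small symbolic configurations.
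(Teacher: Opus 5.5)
Your ``if'' direction is complete. The first half of the ``only if'' direction is correct and matches the paper's set-up: $v$ is even, $\frac{v}{2}$ occurs exactly twice in $\Delta C$, the six other support elements occur exactly three times each, and one may take $C=\{0,\frac{v}{2},x,y,z\}$. Your final enumeration inside $\mathbb{Z}_8$ is also fine.

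The gap is the step in between, which is the whole content of the lemma: that $x,y,z\in\langle\frac{v}{8}\rangle$, and in particular $8\mid v$. You only announce this as an aim. You do not prove that some $d_i$ satisfies $2d_i=\frac{v}{2}$, nor say how this would ``propagate to order $8$''. The split by arc structures is neither shown to be exhaustive nor resolved in a single case. Verifying it ``partly'' by computer does not prove it for all $v$.

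For comparison, the paper fixes a concrete exhaustive split. After relabelling so that neither $y$ nor $z$ equals $-x$, the difference $x$ needs two further coincidences, and the pair $\{\frac{v}{2},x\}$ can supply at most one of them. So, up to swapping $y$ and $z$, one of $x=y+\frac{v}{2}$, $x=\frac{v}{2}-y$, $y=2x$, $x=y-z$ must hold. The first three are excluded and the last forces the stated set.

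A much shorter way to close the gap, which does not even use $\mu(C)\leq 3$, is Kneser's theorem applied to $C+(-C)$. Since $|C-C|=|\bar\Delta C|+1=8=2|C|-2$, the stabiliser $H$ of $C-C$ is nontrivial and $8\geq 2|C+H|-|H|$. As $C-C$ is a union of $H$-cosets, $|H|\in\{2,4,8\}$. If $|H|=2$, then $|C+H|\geq 6$ and hence $8\geq 10$. If $|H|=4$, then $|C+H|\geq 8$ and hence $8\geq 12$. So $|H|=8$ and $C-C=H$ is the subgroup of order $8$; thus $8\mid v$ and, with $0\in C$, $C\subseteq C-C=\langle\frac{v}{8}\rangle$.

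Your $\mathbb{Z}_8$ enumeration then finishes the proof, and it becomes immediate via the complement $E=\mathbb{Z}_8\setminus C$. For $d\neq 0$ one has $|C\cap(C+d)|=2+|E\cap(E+d)|$, so $\mu(C)\leq 3$ says exactly that the six differences of the $3$-set $E$ are distinct, i.e.\ $\Delta E=\pm\{1,2,3\}$. This forces $E$ to be a translate of $\{0,1,3\}$ or of $-\{0,1,3\}$.
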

\begin{proof}
Let $C$ be a codeword-set of type $7$ of a $(v,5,3,1)$-OOC. By above remark we can assume $C=\{0,\frac{v}{2},x,y,z\}$. Without loss of generality, we can assume $x\neq -y$. Since $$\Delta C=\pm\left\{\frac{v}{2},x,y,z,\frac{v}{2}+x,\frac{v}{2}+y,\frac{v}{2}+z,x-y,x-z,y-z\right\}$$
since $C$ has type $7$, each non-involution element must occur exactly $3$ times in the multiset $\Delta C$ and the involution element must occur exactly twice in $\Delta C$. Moreover, since $x,y,z$ are all distinct and $x \neq -y$ by assumption,  we consider the following choices for $x$:\\
Case 1. $x=\frac{v}{2}+y$ which implies $x-y = \frac{v}{2}$ which is an immediate contradiction. \\
Case 2. $x=\frac{v}{2}-y$ which eventually implies $x-z = \frac{v}{2}$, leading to a contradiction.\\ 
Case 3. $x=y-x$ which implies $z=\frac{v}{2}+x$, which leads to a contradiction.\\
Case 4. $x=y-z$, then $C=\{0,\frac{v}{2},x,y,y-x\}$ could be of type $7$ if and only if $C=\left\{0,\frac{v}{8},-\frac{v}{8},\frac{v}{4},\frac{v}{2}\right\}$.
\end{proof}

The following lemmas ensure that in a $(v,5,3,1)$-OOC we can have at most two codeword-sets of type 8. 
\begin{lemma}\label{lem:type8a}
    If $C$ is a codeword-set of type $8$ of a $(v,5,3,1)$-OOC, then it is not restrictive to assume that $C$ is a set of the form $C=\{0,a,-a,b,c\}.$
 \end{lemma}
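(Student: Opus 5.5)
Since $8$ is even, a codeword-set $C$ of type $8$ has even type. By the remark preceding Lemma \ref{lem:type7}, this means $\frac{v}{2}\notin\Delta C$ (when $v$ is even). Every element of $\bar\Delta C$ is therefore a non-involution, and $\bar\Delta C$ splits into $4$ pairs $\{d,-d\}$. Each element $d$ and its negative $-d$ occur with the same multiplicity in $\Delta C$. The $20$ differences are thus distributed over $4$ symmetric pairs with multiplicity at most $3$ each, by (\ref{AutoPropBis}). Hence the multiplicities $m_1,\dots,m_4\le 3$ of the four pairs satisfy $m_1+m_2+m_3+m_4=10$. It follows that at least two pairs have multiplicity $3$, or more precisely that the multiplicity vector is $(3,3,3,1)$ or $(3,3,2,2)$. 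In particular, some $d\in\bar\Delta C$ has $\mu_{\Delta C}(d)\ge 2$; indeed some $d$ has multiplicity $3$.

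The plan is to exploit this high multiplicity to find a ``symmetric'' triple in $C$. Take $d$ with $\mu_{\Delta C}(d)=3$. Then there are three ordered pairs $(x_i,y_i)$ of elements of $C$ with $x_i-y_i=d$. Consider the directed graph on the $5$ points of $C$ with an arc $y\to y+d$ whenever both endpoints lie in $C$. It has $3$ arcs, and each vertex has in-degree and out-degree at most $1$, so it is a union of directed paths (a cycle would need $d$ of order at most $5$ with all of a coset inside $C$, which I will handle separately). With $3$ arcs on $5$ vertices, either there is a path of length at least $2$, or the arcs form a matching. A matching of $3$ arcs would need $6$ vertices, which is impossible. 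Hence there are $y,y+d,y+2d\in C$. Translating by $-(y+d)$, which preserves equivalence and the OOC properties, we get $0,d,-d\in C$. Setting $a=d$ and letting $b,c$ be the remaining two elements gives $C=\{0,a,-a,b,c\}$.

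The main obstacle is the degenerate case where the arcs close into a cycle. This happens when $d$ has order $3$, $4$ or $5$ and $C$ contains a full coset of $\langle d\rangle$ ($5d=0$ would force $C$ to be a coset, of type $4$). Order $2$ is excluded since $d\neq\frac v2$. If $d$ has order $3$, then $\{y,y+d,y+2d\}\subseteq C$ already gives a $3$-term progression, so the conclusion still holds. If $d$ has order $4$, $C$ contains $\{y,y+d,y+2d,y+3d\}$, which again contains three consecutive terms $y,y+d,y+2d$. So the argument goes through in all cases. One point needs verifying: the path argument only requires $3$ arcs among $5$ vertices with in- and out-degree at most one, and this always forces a vertex of in-degree and out-degree $1$ (a path of length two). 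I would state this counting cleanly: the $3$ arcs have $3$ tails and $3$ heads among $5$ points, so some point is both a head and a tail. That point is $y+d$, centred at $0$ after translation.
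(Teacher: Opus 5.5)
Your proof is correct, but it takes a different route from the paper.

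\textbf{The paper's route.} It argues by contradiction. Assuming no translate of $C=\{0,x_1,x_2,x_3,x_4\}$ contains a triple $\{0,a,-a\}$, it extracts three forbidden relations: $x_i\neq -x_j$, $x_i\neq 2x_j$ and $x_i-x_j\neq x_k-x_i$. The first, together with $|\bar\Delta C|=8$, gives $\bar\Delta C=\pm\{x_1,x_2,x_3,x_4\}$. It then runs a case analysis on where $x_1-x_3$ and $x_1-x_4$ can lie in this set, reaching a contradiction each time.

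\textbf{Your route.} You argue directly:
\begin{itemize}
\item The symmetry $\mu_{\Delta C}(d)=\mu_{\Delta C}(-d)$ and the absence of $\frac{v}{2}$ give four pair-multiplicities $m_i\le 3$ with $\sum m_i=10$. So some $d$ has multiplicity exactly $3$.
\item The three $d$-arcs have three distinct tails and three distinct heads, which cannot be disjoint inside a $5$-set. This produces $z$ with $z\pm d\in C$.
\item Since $2d\neq 0$, the points $z-d,z,z+d$ are distinct.
\end{itemize}

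\textbf{Comparison.} Your version is shorter and avoids the case split and symmetry reduction entirely. It also tells you which $a$ works: any difference of multiplicity $3$, centred at a point that is both a head and a tail. As a by-product it yields the multiplicity profile $(3,3,3,1)$ or $(3,3,2,2)$, which is useful input for the classification in Lemma \ref{lem:type8}. The paper's argument stays with explicit equations among the $x_i$, in the same style as the other computations of Section 2. However, it gives no such structural information.

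\textbf{A simplification.} Your discussion of directed cycles is unnecessary. With exactly three $d$-arcs, a cycle could only be a $3$-cycle, since orders $4$ and $5$ would require more arcs. In any case, the head/tail pigeonhole count you give at the end already covers that situation. You can drop the cycle paragraph and keep only the final counting argument.
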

 \begin{proof}
 By way of contradiction, suppose that there exists $C=\{0,x_1,x_2,x_3,x_4\}$ of type $8$ that has no translate of the form $\{0,a,-a,b,c\}$. It is not hard to see that this implies that:
 \begin{itemize}
     \item[(a)] $x_i\neq -x_j$ for every $i\neq j$;
     \item[(b)] $x_i\neq 2x_{j}$ for every $i\neq j$;
     \item[(c)] $x_i-x_j\neq x_k-x_i$ for every $i,j,k$.
 \end{itemize}
 Since $|\bar\Delta C|=8$, by (a) we get that $\bar\Delta C=\pm\{x_1,x_2,x_3,x_4\}$. Hence $x_1-x_3\in \Delta C$ should belong to 
 $\pm\{x_1,x_2,x_3,x_4\}$. Clearly, we cannot have $x_1-x_3\in\{x_1,-x_3\}$ and by (b) we have that $x_1-x_3\not\in\{-x_1,x_3\}$. Hence we can assume that one of the following two cases occurs:\\
 Case 1. $x_1-x_3= x_2$.\\
 If $x_1-x_4=x_2$ then we get $x_3=x_4$, a contradiction. If $x_1-x_4=x_3$, then we get $x_2=x_4$, another contradiction.
 This implies that $x_1-x_4=-x_2$ or $x_1-x_4=-x_3$, in both cases we get a contradiction in view of (c).\\
 Case 2. $x_1-x_3= -x_2$.\\
 From this we have  $x_1-x_4\in \{x_2,x_3,-x_2,-x_3\}$.  If $x_1-x_4=x_2$ we get a contradiction in view of (c). If $x_1-x_4=x_3$ we get a contradiction in view of (a). If $x_1-x_4=-x_2$ then $x_3=x_4$, another contradiction. Finally, if $x_1-x_4=-x_3$ we get a contradiction in view of (c).
 \end{proof}

  The next lemma could be verified keeping in mind the previous one, in order to reduce the  long and tedious calculations.

\begin{lemma} \label{lem:type8}
Let $C$ be a $5$-subset of $\mathbb{Z}_v$. Then $\mu(C)\leq 3$ and $|\bar{\Delta} C|=8$ if and only if one of the following conditions holds: 
\begin{itemize}
\item[(a)] $v \equiv 0 \pmod {10} \ \textrm{and, up to equivalence,} \ C=\left\{0,\frac{v}{10},\frac{v}{5},-\frac{v}{5},\frac{2v}{5}\right\}, $
\item[(b)] $v \equiv 0 \pmod {9} \ \textrm{and, up to equivalence,} \ C=\left\{0,\frac{v}{9},\frac{2v}{9},-\frac{2v}{9},\frac{v}{3}\right\} \textrm{or}\\ C=\left\{0,\frac{v}{9},-\frac{v}{9},-\frac{2v}{9},\frac{v}{3}\right\}, \
\textrm{or}\ C=\left\{0,\frac{v}{9},\frac{2v}{9},-\frac{2v}{9},-\frac{v}{3}\right\} \textrm{or}\ 
C=\left\{0,\frac{v}{9},\frac{2v}{9},\frac{v}{3},-\frac{v}{3}\right\}.$
\end{itemize}
\end{lemma}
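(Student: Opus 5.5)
We have to prove both directions. The ``if'' direction is a direct computation: for each listed set we compute $\Delta C$ and check that its support has $8$ elements, each with multiplicity at most $3$. Equivalence $C'=mC+t$ with $\gcd(m,v)=1$ preserves both the type and $\mu$, so it suffices to check the representatives. For (a), write $u=v/10$, so $C=u\{0,1,2,8,4\}$ in $\mathbb{Z}_v$. The $20$ differences then fall into $\pm\{1,2,3,4\}u$, with multiplicities summing to $20$ over $8$ elements, each at most $3$ (in fact $2$ or $3$). For (b), write $u=v/9$. Each of the four sets lies in $u\{0,\dots,8\}$ with support $\pm\{1,2,3,4\}u$; we tabulate the multiplicities and verify none exceeds $3$.

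For the ``only if'' direction we use Lemma~\ref{lem:type8a} and write $C=\{0,a,-a,b,c\}$. Since type $8$ is even, $v/2\notin\Delta C$ (by the remark on odd type), so every element of the support pairs with its negative. Thus $\bar\Delta C=\pm\{d_1,d_2,d_3,d_4\}$. The ten differences $x-y$ with $x>y$ in a fixed labelling, together with their negatives, give multiplicities $m_1,\dots,m_4$ with $\sum m_i=10$ and $m_i\le 3$. Hence the multiplicity pattern is $(3,3,3,1)$ or $(3,3,2,2)$. The ten differences up to sign are
\[
a,\ 2a,\ b,\ c,\ b-a,\ b+a,\ c-a,\ c+a,\ b-c.
\]
Here $a$ appears twice, since $a-0=0-(-a)$, so the list has length $10$. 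The differences $a$ and $2a$ are already in the support. Each of $b,c,b\pm a,c\pm a,b-c$ must then coincide up to sign with few others, so that only four classes remain.

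The main case analysis proceeds as follows. First decide whether $2a\equiv\pm a$. This is impossible, since it would give $a=0$, $3a=0$ (so $a=v/3$), or $v/2$ in the support. So $a$ and $2a$ are two distinct classes. The remaining seven differences $b,c,b-a,b+a,c-a,c+a,b-c$ must fill out the multiplicities using at most two new classes beyond $\pm a,\pm 2a$. We branch on the value of $b$ up to sign: $b\in\{\pm 2a\}$ (note $b=\pm a$ is excluded since $b\ne\pm a$ as elements), or $b$ a new class $d_3$. Similarly, by the symmetry $b\leftrightarrow c$ and $a\to -a$, we branch on $c$. In each branch, the relations forced on $b\pm a$ and $c\pm a$ become linear equations in $a$ (and possibly $b$). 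These pin down $b,c$ as small multiples of $a$ and force an order condition $ra=0$. We expect the surviving possibilities to be $a$ of order $10$ or $9$ (respectively order $5$ or $9$ after rescaling). Applying a multiplier $m$ coprime to $v$ then normalises $a$ to $v/10$, $v/5$, or $v/9$ and matches one of the listed sets; sets outside the list are discarded because they violate $\mu\le 3$ or give type $\ne 8$.

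The expected obstacle is bookkeeping. We must ensure the branching is exhaustive and that each algebraic solution is checked against the requirement that the five elements be distinct and that $v/2\notin\Delta C$. We also need to recognise when two solutions are equivalent. Such solutions would otherwise produce spurious extra cases, which is presumably why (b) lists four inequivalent representatives: for $v$ divisible by $9$ the multipliers $\pm1,\pm2,\pm4$ on $\mathbb{Z}_9$ do not identify them. To control this, we would organise the analysis by which multiplicity pattern occurs and which differences realise multiplicity $3$, reducing each branch to a finite computation in $\mathbb{Z}_{r}$ with $r\le 10$.
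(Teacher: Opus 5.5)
Your overall route (direct verification for ``if'', then Lemma~\ref{lem:type8a} and a case analysis on $C=\{0,a,-a,b,c\}$ for ``only if'') is what the paper indicates. Your ``if'' computations and the reduction to the patterns $(3,3,3,1)$, $(3,3,2,2)$ are fine.

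The genuine gap is your first branching step: $2a=-a$, i.e.\ $3a=0$, is \emph{not} impossible. The fourth set in (b) is itself of the form $\{0,a,-a,b,c\}$ with $a=\frac v3$, $b=\frac v9$, $c=\frac{2v}9$. There the triangle $\{0,a,-a\}$ gives $\pm a$ multiplicity $3$, and the other seven differences fill three further classes. So your count ``at most two new classes beyond $\pm a,\pm 2a$'' rests on a false premise.

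This branch has to be treated, and it is short:
\begin{itemize}
\item Put $H=\{0,a,-a\}$, the subgroup of order $3$. The differences $b,b-a,b+a$ form the coset $b+H$. If two of them were opposite, then $2b\in H$, so $x\mapsto -x$ would permute $b+H$ and fix an element, putting $\frac v2$ in $\Delta C$.
\item Hence $b+H$ occupies the three remaining classes. The differences $c,c\pm a$ form $c+H$, which must equal $b+H$ or $-b+H$. The former gives $b-c\in\{\pm a\}$, i.e.\ multiplicity $4$ for $\pm a$, so $c+H=-b+H$.
\item Then $b-c$ must lie in $\pm(b+H)$. It cannot lie in $b+H$, since $c\notin H$, so it lies in $-b+H$. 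This forces $3b\in H$, and since $b\notin H$, $b$ has order $9$.
\item Rescaling to $b=\frac v9$ gives $C=\{0,\pm\frac v3,\frac v9,c\}$ with $c\in\{\frac{2v}9,\frac{5v}9,\frac{8v}9\}$. All three are equivalent to the fourth set of (b).
\end{itemize}
Alternatively, you could prove that every such set also has a symmetric translate with $3a\neq 0$. That is true here: subtracting $\frac v9$ gives $\{0,\pm\frac v9,\frac{2v}9,\frac{5v}9\}$. But it is an extra claim which Lemma~\ref{lem:type8a} does not supply.

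Two further predictions in your sketch are off and would mislead the bookkeeping.
\begin{itemize}
\item The branch equations need not make $b,c$ multiples of $a$. Set (a) in its listed form has $a=\frac v5$ and $c=2a$, but $b=\frac v{10}=\frac v2-2a\notin\langle a\rangle$. The relations only yield $2b=a$, which fixes $b$ up to adding $\frac v2$.
\item The four sets in (b) are not pairwise inequivalent. The second is obtained from the first by $x\mapsto -x+\frac v9$, and the third by $x\mapsto mx+\frac v9$ for any unit $m\equiv 5\pmod 9$. Only the fourth, which contains a coset of the subgroup of order $3$ (so $\pm\frac v3$ has multiplicity $3$ instead of $2$), is genuinely different.
\end{itemize}
The second point does not affect the truth of the lemma, since ``only if'' just needs each solution to be equivalent to some listed set. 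But a correct case analysis should end with two equivalence classes in (b), not four.
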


We can only have one of the codeword-sets in case \emph{(b)} within our OOC because the sets are not difference-disjoint. Therefore, we can only obtain a $(v,5,3,1)$-OOC with two type $8$ codeword-sets if $v$ is divisible by $90$. Otherwise, we have at most one codeword-set of type $8$.

Our final lemma deals with codeword-sets of type $9$. Also here, keep in mind that, since the type is odd, we can assume that $C$ contains $0$ and $\frac{v}{2}$.

\begin{lemma} \label{lem:type9}
Let $C$ be a $5$-subset of $\mathbb{Z}_v$. Then $\mu(C)\leq 3$ and $|\bar{\Delta} C|=9$ if and only if
\begin{itemize}
\item[(a)]
$v\equiv 0 \pmod {10}\ \textrm{and, up to equivalence,} \ C=\left\{0,\frac{v}{10},-\frac{v}{10},\frac{v}{5},\frac{v}{2}\right\}
\textrm{or}\\ \ C=\left\{0,\frac{v}{10},\frac{v}{5},-\frac{v}{5},\frac{v}{2}\right\}\textrm{or}\
C=\left\{0,\frac{v}{10},\frac{v}{5},\frac{3v}{10},\frac{v}{2}\right\} \textrm{or}\ 
C=\left\{0,\frac{v}{10},\frac{v}{5},\frac{2v}{5},\frac{v}{2}\right\},$
\item[(b)] $v\equiv 0 \pmod {12}\ \textrm{and, up to equivalence,} \ 
C=\left\{0,\frac{v}{6},\frac{v}{4},\frac{v}{3},\frac{v}{2}\right\}
\textrm{or}\ 
C=\{0,\frac{v}{12},-\frac{v}{4}, \frac{5v}{12}, \frac{v}{2}\}.$
\end{itemize}
\end{lemma}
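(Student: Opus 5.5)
The converse ("if") direction is a direct computation. For each of the six listed sets, I will write out the $20$ elements of $\Delta C$ as multiples of $v/10$ or $v/12$. I then check that exactly nine distinct values occur and that none has multiplicity exceeding $3$. For example, for $C=\{0,\frac{v}{10},-\frac{v}{10},\frac{v}{5},\frac{v}{2}\}$, written in units of $v/10$, the set is $\{0,1,-1,2,5\}$. Its differences are $\pm1,\pm1,\pm2,\pm5,\pm2,\pm3,\pm4,\pm1,\pm6,\pm3$, which gives support $\{\pm1,\pm2,\pm3,\pm4,5\}$ of size $9$, since $\pm6=\mp4$. All multiplicities are at most $3$. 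The remaining five sets are handled the same way.

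For the direct ("only if") direction, I follow the strategy of Lemma~\ref{lem:type7}. Since the type is odd, $v$ is even, and by the remark before Lemma~\ref{lem:type7} we may take $C=\{0,\frac{v}{2},x,y,z\}$. Then
$$\Delta C=\pm\left\{\tfrac{v}{2},x,y,z,\tfrac{v}{2}+x,\tfrac{v}{2}+y,\tfrac{v}{2}+z,x-y,x-z,y-z\right\}.$$
The support consists of $\frac{v}{2}$ together with four pairs $\pm u_1,\dots,\pm u_4$. The $20$ differences split as $2\cdot m_0+2(m_1+\dots+m_4)=20$, where $m_0$ is the multiplicity of the involution $\frac{v}{2}$ and $m_i\le 3$. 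The involution occurs in pairs from $\pm\frac{v}{2}$, and any further occurrences arise from coincidences like $x-y=\frac{v}{2}$. Each coincidence of that kind also forces the pair $\{x,y\}$ to be mapped to itself by translation by $\frac{v}{2}$, so $C$ is closed under translation by $\frac{v}{2}$ on part of its elements. So the counting gives the possibilities $(m_0;m_1,\dots,m_4)$ with $m_0\in\{1,2,\dots\}$ (in units of pairs) and $\sum m_i=10-m_0$, each $m_i\le3$. This forces either $m_0=1$ with multiplicities $(3,3,3,0)$ (impossible, since four classes are needed), so $(3,3,2,2)$ or $(3,3,3,1)$, or $m_0=2$ with $\sum m_i=8$, giving $(3,3,1,1)$, $(3,2,2,1)$ or $(2,2,2,2)$, and so on. I will first split on whether $\frac{v}{2}$ appears among $x-y,x-z,y-z$, i.e.\ whether $C$ contains a further pair $\{w,w+\frac{v}{2}\}$. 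If it does, say $y=x+\frac{v}{2}$, then $C=\{0,\frac{v}{2},x,x+\frac{v}{2},z\}$ and the differences are highly structured. I expect this branch to lead to the $v\equiv0\pmod{12}$ family, such as $\{0,\frac{v}{4},\frac{v}{2},\ldots\}$-type configurations, or to contradictions.

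In the main branch, the pair $\{0,\frac{v}{2}\}$ is the only antipodal pair. Here I mimic the case analysis of Lemma~\ref{lem:type7}, with multiplicities up to $3$ now allowed but with one extra distinct class. The element $x$ must coincide (up to sign) with enough of the other nine listed differences. So I run through the choices $x=\pm(\frac{v}{2}\pm y)$, $x=\pm(y-x)$ (i.e.\ $y=2x$ or $x=-x+\ldots$), $x=\pm(y-z)$, and so on, using the equivalence $C\mapsto mC+t$ to normalise. Each surviving choice imposes linear relations among $x,y,z$ modulo $v$. Because the type is small, these relations force $x,y,z$ to be multiples of $v/d$ with $d\in\{10,12\}$, giving the listed sets, or force a collision among $0,\frac{v}{2},x,y,z$.

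The main obstacle is keeping this case analysis finite and exhaustive. To control it, I will first use a counting observation: with type $9$ and $\mu\le3$, at least two of the four non-involution classes have multiplicity $3$. I will start the analysis from a class of multiplicity $3$, which pins down many linear relations at once, and then solve each resulting small linear system modulo $v$.
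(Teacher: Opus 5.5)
The paper gives no proof of this lemma beyond the normalisation $\{0,\frac{v}{2}\}\subseteq C$, so your plan follows the route it implicitly suggests. Your ``if'' direction is fine: all six listed sets do have type $9$ and $\mu=3$. The ``only if'' plan, however, rests on counting claims that are wrong. Each pair of $C$ at distance $\frac{v}{2}$ contributes $\frac{v}{2}$ twice to $\Delta C$, so $\mu(C)\le 3$ forces $\{0,\frac{v}{2}\}$ to be the \emph{only} such pair. Your branch $y=x+\frac{v}{2}$ therefore dies at once ($\frac{v}{2}$ would have multiplicity $4$). It cannot produce family (b), whose sets each contain a single antipodal pair. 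The other $18$ differences fill four classes $\pm u_i$ with multiplicities $(3,3,2,1)$ or $(3,2,2,2)$; your lists $(3,3,2,2)$ and $(3,3,3,1)$ sum to $10$, not $9$. In particular, the pruning observation you intend to rely on, that at least two classes have multiplicity $3$, is false. Your own example $\{0,1,-1,2,5\}$ (in units of $\frac{v}{10}$) has pattern $(3,2,2,2)$, as do five of the six listed sets, so the analysis you describe would discard most of the genuine solutions.

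The decisive step is only announced: showing that the coincidences force $C$ into $\langle\frac{v}{10}\rangle$ or $\langle\frac{v}{12}\rangle$, and that exactly the listed sets survive. The second half cannot be proved, because it is false. For $v=10$, $C=\{0,1,3,5,7\}$ has multiplicities $1,3,2,3,2,3,2,3,1$ on $1,\dots,9$, hence type $9$ and $\mu(C)=3$. Its only differences of multiplicity $1$ are $\pm1$. Among the four sets in (a), only $\{0,1,2,3,5\}$ has differences of multiplicity $1$, namely $\pm4$. Since $\Delta(mC+t)=m\,\Delta C$ and $4m\not\equiv\pm1\pmod{10}$, $C$ is equivalent to none of them. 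Moreover, the first two sets of (a) are equivalent via $x\mapsto 3x+\frac{v}{2}$; presumably the first was meant to be $\{0,\frac{v}{10},-\frac{v}{10},\frac{3v}{10},\frac{v}{2}\}$, which is $3C$. So a complete analysis must end with four classes for $10\mid v$ and the two listed ones for $12\mid v$. In units of $\frac{v}{10}$, the four classes are those of $\{0,1,2,3,5\}$, $\{0,1,2,8,5\}$, $\{0,1,2,4,5\}$ and $\{0,1,3,5,9\}$.

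To make the case analysis finite rather than open-ended, write $C=\{0,\frac{v}{2},a,b,c\}$ and let $\pi:\mathbb{Z}_v\to\mathbb{Z}_{v/2}$ be reduction. The differences $\pm a,\pm(a+\frac{v}{2})$ fill the whole fibre $\pi^{-1}(\pm\pi(a))$, and likewise for $b$ and $c$. Only $8$ non-involution differences are available, so after relabelling $\pi(b)=-\pi(a)$. If $\pi(c)$ is not the involution of $\mathbb{Z}_{v/2}$, the two fibres of $\pi(a)$ and $\pi(c)$ already exhaust the support. Requiring $\pi(a-b),\pi(a-c),\pi(b-c)\in\pm\{\pi(a),\pi(c)\}$ then forces $\pi(a)$ to have order $5$, so $C\subseteq\langle\frac{v}{10}\rangle$. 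If $\pi(c)$ is the involution, then $c=\pm\frac{v}{4}$, and the remaining conditions leave only $C\subseteq\langle\frac{v}{12}\rangle$ or $C\subseteq\langle\frac{v}{8}\rangle$; the latter has type at most $7$. What remains is a finite check in $\mathbb{Z}_{10}$ and $\mathbb{Z}_{12}$.
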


Therefore, we get that, in general, $\mathbb{Z}_v$ has many $5$-subsets $C$ with $\mu(C)\leq 3$ and $|\bar{\Delta} C|= 10$.
All these lemmas are crucial to prove the following result.

\begin{theorem}\label{thm:bound}
If $s$ is the size of a $(v,5,3,1)$-OOC, then we have:
$$s\leq \left\{ \begin{array}{l} 
\lceil{v\over10}\rceil\ \textrm{for}\ \ v\equiv8 \pmod{40} \ \textrm{or}\ \ v\equiv 9 \pmod{90} \ \textrm{or}\ v\equiv 108 \pmod{180} \\ \ \ \ \ \ \ \ \ \ \ \ \textrm{or}\ \ v\equiv216 \pmod{360} \\
\lfloor{v\over10}\rfloor\  \textrm{otherwise}\
\end{array} \right.$$

\end{theorem}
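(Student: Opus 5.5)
The plan is a counting argument on supports. By (\ref{CrossPropBis}) the sets $\bar\Delta C_1,\dots,\bar\Delta C_s$ are pairwise disjoint subsets of $\mathbb{Z}_v\setminus\{0\}$, so
$$\sum_{i=1}^s t(C_i)\le v-1 .$$
Every codeword-set has type at least $7$. Lemmas \ref{lem:type7}, \ref{lem:type8} and \ref{lem:type9} list all sets of type $7,8,9$, so any other codeword-set has type at least $10$. Write $n_j$ for the number of codeword-sets of type $j$. The inequality becomes
$$10s\le v-1+3n_7+2n_8+n_9 ,$$
and the task is to bound the correction $3n_7+2n_8+n_9$ in each residue class of $v$.

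First I will collect the restrictions on the $n_j$. Sets of type $7$ and type $9$ have odd type, so each contains the involution $v/2$. Since at most one codeword-set has odd type, $n_7+n_9\le 1$, and this is possible only for $v$ even. By Lemma \ref{lem:type7}, $n_7=1$ forces $8\mid v$. By Lemma \ref{lem:type9}, $n_9=1$ forces $10\mid v$ or $12\mid v$. By Lemma \ref{lem:type8} and the remark after it, $n_8\le 1$ unless $90\mid v$, where $n_8\le 2$. Also $n_8\ge1$ forces $9\mid v$ or $10\mid v$.

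Next I will check pairwise compatibility using the explicit sets. In each case I will test whether the supports are disjoint, together with all their equivalents $m\cdot C+t$, where only the multiplier $m$ matters for the support. The key observation is that the type-$7$ set has support $\pm\{v/8,v/4,3v/8\}\cup\{v/2\}$, which lies in the subgroup of order $8$. The type-$8$ sets of case (a) lie in the subgroup of order $10$, those of case (b) in the subgroup of order $9$, and the type-$9$ sets in the subgroups of order $10$ or $12$. Two sets whose supports lie in subgroups $H,K$ can only be difference-disjoint if the parts of their supports inside $H\cap K$ are disjoint. For instance, a type-$8$ set from case (a) and a type-$9$ set from case (a) both sit in the order-$10$ subgroup. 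Their supports have sizes $8$ and $9$, but that subgroup has only $9$ nonzero elements, so they clash. Checking such intersections determines, for each $v$, the maximum $M(v)$ of $3n_7+2n_8+n_9$. The bound is then $s\le\lfloor (v-1+M(v))/10\rfloor$.

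Finally I will do the residue bookkeeping. Write $v=10q+r$ with $0\le r\le 9$. Then $\lfloor(v-1+M)/10\rfloor$ exceeds $\lfloor v/10\rfloor$ exactly when $r+M\ge 11$, or when $r=0$ and $M\ge1$ (here $v/10$ is an integer, so the ceiling equals the floor and the bound is still $\lfloor v/10\rfloor$ only if $r-1+M<10$).
\begin{itemize}
\item $v\equiv 8\pmod{40}$: here $8\mid v$ and $r=8$, so $M\ge 3$ from the type-$7$ set, giving $r+M\ge11$. This yields $\lceil v/10\rceil$.
\item $v\equiv 9\pmod{90}$: here $v$ is odd with $9\mid v$ and $r=9$, so $M=2$ from one type-$8$ set of case (b), giving the ceiling.
\item $v\equiv108\pmod{180}$ and $v\equiv216\pmod{360}$: these should be the even cases with $r=8$ or $6$ where combinations such as type $8$(b) plus type $7$ or type $9$(b) give $M$ large enough.
\end{itemize}
In all other classes I will verify that $r+M\le 10$, or that $r=0$. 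In the case $r=0$ I have to use that $v/10$ is an integer and that the correction $M$ is at most $3$, which gives at most $v/10$ when $M\le1$. When $M$ is larger with $r=0$, I need a slightly finer argument, for example that the types must sum to at most $v-1$ with all types even except possibly one.

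The main obstacle is the compatibility analysis: showing that combinations such as type $7$ with type $8$, or type $9$ with two type-$8$ sets, either clash or occur only in the listed classes. This includes checking all multipliers coprime to $v$, not just the representatives. I would first reduce each support to a union of orbits of nonzero elements in the relevant cyclic subgroup. Disjointness then becomes a finite check in $\mathbb{Z}_{\mathrm{lcm}}$, with $\mathrm{lcm}\in\{72,90,120,360\}$, which explains the moduli $40,90,180,360$ appearing in the statement.
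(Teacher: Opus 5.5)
Your inequality $10s\le v-1+3n_7+2n_8+n_9$ and the constraints in your second paragraph are exactly what the paper's proof uses. The paper splits by $10\mid v$, $9\mid v$ and parity rather than by $r$, but the bookkeeping is the same. The problem is where you put the work. The pairwise compatibility analysis you call the main obstacle is unnecessary. Meanwhile the step that actually carries the theorem, namely $s\le\lfloor v/10\rfloor$ outside the four listed classes, is only promised. It follows in a few lines from the constraints you already wrote down:
\begin{itemize}
\item If $v$ is odd, then $n_7=n_9=0$, and $n_8\le1$ with $n_8=1$ only if $9\mid v$. So $M\le2$, and $r+M\ge11$ forces $r=9$ and $9\mid v$, i.e.\ $v\equiv9\pmod{90}$.
\item If $r\in\{2,4,6,8\}$, then $10\nmid v$. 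So $n_8\le1$ with $n_8=1$ only if $9\mid v$, $n_9=1$ only if $12\mid v$, and $M\le5$. Now $r+M\ge11$ forces one of two situations. Either $r=6$ and $n_7=n_8=1$, so $72\mid v$ and $v\equiv216\pmod{360}$. Or $r=8$ and either $n_7=1$, giving $v\equiv8\pmod{40}$, or $n_8=n_9=1$, giving $36\mid v$ and $v\equiv108\pmod{180}$.
\end{itemize}
In the four listed classes you only need $s\le q+1$, which follows from $M\le7$. Checking that $M$ can be large there only shows that counting does not exclude $q+1$, and an upper bound does not require that.

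Your treatment of $r=0$ is also wrong, in three ways. First, for $1\le M\le10$ one has $\lfloor(10q-1+M)/10\rfloor=q$, so $r=0$ with $M\ge1$ does not push the bound above $\lfloor v/10\rfloor$. Second, your claim that $M\le 3$ is false. For $360\mid v$, the type-$7$ set (support $=$ all nonzero elements of the order-$8$ subgroup) and one type-$8$ set of each kind have pairwise disjoint supports, giving $M=7$. Third, no finer argument is needed, since $10s\le 10q-1+7$ already gives $s\le q$.
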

\begin{proof}
Let $\mathcal{C}=\{C_1,\ldots,C_s\}$ be a $(v,5,3,1)$-OOC of size $s$ and let $\delta_i$ be the type of $C_i$.
We have $v-1\geq \delta_1+\ldots+\delta_s$. Let $v=10q+r$ be the Euclidean division of $v$ by $10$, hence
$q=\lfloor{v\over10}\rfloor$ and $0\leq r\leq 9$. We recall that $\mathcal{C}$ may have at most one codeword-set of odd type, hence at most one of type $7$ or $9$. Additionally, by Lemma \ref{lem:type8} there are at most two codeword-sets of type $8$. Therefore, there are at most $3$ codeword-sets with type less than $10$. This implies $v-1\geq 10(s-3) + 23 $, namely
$10q+r+6\geq 10s$, which implies $s\leq q+1$.

From now on, we will assume that the size is as large as possible, namely $s=q+1$. In order to prove the thesis, it is sufficient 
to show that either $v\equiv8 \pmod{40}$ or $v\equiv 9\pmod {90}$ or $v\equiv 108 \pmod{180}$ or $v\equiv216\pmod{360}$. 

We split the proof into four cases.
\begin{itemize}
\item[Case 1:] $v\equiv 0\pmod{10}$.\\
We have at most one codeword-set of odd type and at most two codeword-sets of type $8$.\\
Case 1a. Suppose that a codeword-set of type $7$ exists. As there is at most one codeword-set of odd type, there is no codeword-set of type $9$.
Then  $v-1\geq 7+8+8+10(s-3)$. Recalling that $v=10q+r$ and $s=q+1$, we obtain $r\geq 4$, against the hypothesis that $r=0$.\\
Case 1b. Suppose now that a codeword-set of type $7$ does not exist.
Then  $v-1\geq 9+8+8+10(s-3)$,  which, as before, leads to a contradiction.
\item[Case 2:] $v\not\equiv 0\pmod{9}$ and $v\not\equiv 0\pmod{10}$.\\
By previous lemmas, we have at most one codeword-set of type $7$, no codeword-sets of type $8$ and at most one codeword-set of type $9$.\\
Case 2a. Suppose that a codeword-set of type $7$ exists. As there is at most one codeword-set of odd type, there is no codeword-set of type $9$.
Then $v-1\geq 7 +10(s-1)$, which implies $r\geq 8$; recalling that $v$ is even, we get
$r=8$. By Lemma \ref{lem:type7} we have $v\equiv 0 \pmod{8}$ and so we must have $v\equiv 8 \pmod{40}$.\\
Case 2b. Suppose now that a codeword-set of type $7$ does not exist.
Then $v-1\geq 9 + 10(s-1)$, which implies $r\geq 10$, a contradiction.
\item[Case 3:] $v\equiv 0\pmod{9}$, $v\not\equiv 0\pmod{10}$ and $v$ odd.\\
Since $v$ is odd, there is no codeword-set of odd type. By Lemma \ref{lem:type8}, there is at most one codeword-set of type $8$.
Then $v-1\geq 8+10(s-1)$, which implies $r\geq 9$, that is $r=9$. Hence $v\equiv 9\pmod{90}$. 
\item[Case 4:] $v\equiv 0\pmod{9}$, $v\not\equiv 0\pmod{10}$ and $v$ even.\\
We have at most one codeword-set of type odd and one codeword-set of type $8$. \\
Case 4a. Suppose that a codeword-set of type $7$ exists. As there is at most one codeword-set of odd type, there is no codeword-set of type $9$.
Then  $v-1\geq 7+8+10(s-2)$, which implies $r\geq 6$, which means $r=6,8$.
If $r=6$, then we have $v\equiv 6\pmod{10}$, $ v\equiv 0\pmod 8$ (by Lemma \ref{lem:type7}) and $v\equiv 0\pmod 9$. Therefore, using the Chinese Remainder Theorem, we have $v\equiv 216 \pmod {360}$. Analogously, if $r=8$, then we have $v\equiv 8\pmod{10}$, $ v\equiv 0\pmod 8$ and $v\equiv 0\pmod 9$, which simultaneously hold only for $v\equiv 288 \pmod {360}$. Note that this class of values of $v$ is included in $v\equiv 8\pmod{40}$ that results from Case 2a.\\
Case 4b. Suppose now that a codeword-set of type $7$ does not exist.
Then  $v-1\geq 8+9+10(s-2)$, which implies $r\geq 8$. As $v$ is even, we must have $r=8$, that is $v\equiv 8 \pmod {10}$. Additionally, we have $v\equiv 0 \pmod {9}$, and $v\equiv 0 \pmod {12}$ by Lemma \ref{lem:type9}. Therefore, using the Chinese Remainder Theorem, we have $v\equiv 108 \pmod {180}$. \qedhere
\end{itemize}
\end{proof}
As already remarked, in general, $\mathbb{Z}_v$ has many $5$-subsets $C$ with $\mu(C)\leq 3$ and $|\bar{\Delta} C|= 10$. In the following proposition we establish all codeword-sets of type $10$. As usual, given $x \in \mathbb{Z}_v$, by $ord(x)$ we denote the order of $x$ in the group.

\begin{proposition}\label{prop:tipo10}
     Let $C$ be a $5$-subset of $\mathbb{Z}_v$. Then $\mu(C)\leq 3$ and $|\bar{\Delta} C|=10$ if and only if
    \begin{itemize}
          \item[(a)] $C=\{0, x,-x, 3x, -3x\}$ where $ord(x)\geq 11$ and $ord(x) \neq 12$, up to equivalence;
                \item[(b)]  $C=\{0, x,2x, -2x, 3x\}$  where $ord(x)\geq 11$, up to equivalence;
           \item[(c)]      $C=\{0, x,2x, 4x, 5x\}$  where $ord(x)\geq 11$, up to equivalence;
        \item[(d)] $v\equiv 0 \pmod 3$,  $ord(x) \not\in \{1,2,3,4,6,9\}$, and, up to equivalence, $C=\{0,\frac{v}{3}, \frac{2v}{3}, x,-x\}$;
        \item[(e)]  $v\equiv 0 \pmod{11}$ and, up to equivalence, $ C= \{0,\frac{v}{11},\frac{2v}{11},\frac{3v}{11},\frac{6v}{11}\}$ or $C=\{0,\frac{v}{11},\frac{2v}{11},\frac{4v}{11},\frac{7v}{11}\}$;
        \item[(f)] $v\equiv 0\pmod{12}$, and, up to equivalence, $C=\{0,\frac{v}{12},\frac{v}{6},\frac{v}{3},\frac{3v}{4}\}$.
    \end{itemize}
\end{proposition}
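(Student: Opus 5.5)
Since $|\Delta C|=20$ and $|\bar\Delta C|=10$, the condition $\mu(C)\le 3$ forces a very rigid multiplicity pattern. Type $10$ is even, so by the remark preceding Lemma \ref{lem:type7} either $\frac v2\notin\Delta C$, or $\frac v2$ occurs an even number of times. The support splits into pairs $\pm d$, and $\mu_{\Delta C}(d)=\mu_{\Delta C}(-d)$. Hence the $20$ differences are distributed over $5$ pairs $\{\pm d\}$ (or over involutions) with multiplicities in $\{1,2,3\}$ summing to $10$ in each half.

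The possible multiplicity profiles of the five pairs are therefore $(2,2,2,2,2)$, $(3,2,2,2,1)$, $(3,3,2,1,1)$ and $(3,3,3,1,0)$. The last one is excluded, since a pair of multiplicity $0$ is not in the support. The involution $\frac v2$, if present, must be counted separately.

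The first reduction is to normalise $C$ as in Lemma \ref{lem:type8a}. If some difference has multiplicity $\ge 2$, I translate so that $C=\{0,a,-a,b,c\}$ or $C=\{0,a,2a,b,c\}$. I expect that the pattern $x_i-x_j=x_k-x_i$ (a $3$-term arithmetic progression) or the symmetric pattern can always be assumed. To justify this I would rerun the argument of Lemma \ref{lem:type8a}: if conditions (a), (b), (c) there all hold, every difference among $x_1,\dots,x_4$ must still land in a set of size at most $10$, and counting shows some coincidence is forced. In fact the pigeonhole gives more: a support of size $10$ with $20$ entries means repeated differences, and a repeat $x_i-x_j=x_k-x_l$ with four distinct indices can be reduced, via the translate $C-x_j$, to one of the two normal forms above, or else it leads to an explicit case.

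Next, I would write $C=\{0,a,\pm a\text{ or }2a,b,c\}$ and list its ten differences up to sign. I then impose that exactly five classes occur, each with multiplicity at most $3$. This is a finite case analysis: each equation equates two differences linearly in $a,b,c$, giving $b,c$ as small multiples of $a$ (yielding families (a), (b), (c), with $C\subseteq\langle x\rangle$). Alternatively, an equation can force a torsion relation such as $3b=0$ or $11a=0$, yielding the exceptional families (d), (e) and (f), with elements of order $3$, $11$ and $12$.

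For each family obtained, I would compute $\Delta C$ explicitly as a multiset of multiples of $x$. I then determine exactly which orders of $x$ cause additional collapses. A collapse either lowers the type to $7$, $8$ or $9$ (these are already classified in Lemmas \ref{lem:type7}, \ref{lem:type8} and \ref{lem:type9}), or pushes some multiplicity above $3$. This is how the conditions $ord(x)\ge 11$, $ord(x)\neq 12$ in (a), and $ord(x)\notin\{1,2,3,4,6,9\}$ in (d) arise. For instance, in (a) the differences are $\pm x,\pm 2x,\pm3x,\pm4x,\pm6x$ with multiplicities $2,2,3,2,1$. When $ord(x)=12$, $6x$ is an involution and $\pm4x$ merges with $\mp 8x$, so the type changes. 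Small orders make elements coincide outright.

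The converse direction is direct verification of each listed set. The main obstacle is the exhaustive forward case analysis. It is essential to organise it so that no configuration is missed, in particular the configurations not contained in a cyclic subgroup generated by one element, which produce (d). I would handle this by splitting according to whether $C$ contains a nontrivial subgroup coset (giving $\{0,\frac v3,\frac{2v}3\}$) or not. I would also use the multiplier equivalence $C\mapsto mC$ to fix the generator in each family.
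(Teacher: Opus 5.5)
The gap is in the normalisation that your whole case analysis rests on. Your profile count and overall strategy match the paper's. But the claim that $C$ may be assumed to contain a $3$-term progression is not justified, and the two justifications you sketch do not work. Your two normal forms are translates of each other, $\{0,a,-a,b,c\}+a=\{0,a,2a,a+b,a+c\}$. So the claim is exactly that two edges of $K_5$ with the same label $\pm d$ share a vertex.

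\emph{Translation does not help.} A repeat $x_i-x_j=x_k-x_l$ with four distinct indices is a parallelogram $x_i+x_l=x_j+x_k$. Translation preserves this shape, so $C-x_j=\{0,a,b,a+b,\ast\}$ and no progression appears.

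\emph{Rerunning Lemma~\ref{lem:type8a} fails too.} There the support has size $8$, which forces $\bar\Delta C=\pm\{x_1,\dots,x_4\}$ and hence forces every inner difference $x_i-x_j$ into that set. With support $10$ there is an extra class $\pm d$ that can absorb up to three inner differences. Moreover, a coincidence $x_i-x_j=\pm x_k$ with $k\notin\{i,j\}$ is again a parallelogram. Conditions (a)--(c) of that lemma, which jointly say ``no $3$-term progression'', do not exclude it. Pigeonhole gives you repeated differences, not repeats at a common vertex.

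For the profiles $(3,3,2,1,1)$ and $(3,2,2,2,1)$ the repair is immediate, and it is what the paper does. Three edges of $K_5$ with the same label $\pm a$ cannot be pairwise disjoint. They cannot form a star either, since a vertex $y$ has only the two neighbours $y\pm a$. So they form one of three shapes:
\begin{itemize}
\item a triangle, with $3a=0$, giving the coset $\{0,\frac v3,\frac{2v}3\}$;
\item a path, giving $\{0,a,2a,3a,b\}$;
\item a path of length two plus a disjoint edge, giving $\{0,a,2a,a+b,2a+b\}$.
\end{itemize}
These one-parameter cases yield (a)--(d) and the first set in (e).

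For $(2,2,2,2,2)$, where the second set in (e) and the set in (f) live, there is no pigeonhole. The paper only says the reasoning is similar, so you need a real argument. One that works is the following. Suppose no two equal-label edges meet. Then the five label classes are pairs of disjoint edges partitioning $E(K_5)$, and each vertex is missed by exactly one class. Up to relabelling, this is the near-one-factorisation $M_i=\{\{c_{i+1},c_{i-1}\},\{c_{i+2},c_{i-2}\}\}$, $i\in\mathbb{Z}_5$. It gives $c_{i+1}-c_{i-1}=\epsilon_i(c_{i+2}-c_{i-2})$ with $\epsilon_i=\pm1$. A direct computation shows:
\begin{itemize}
\item $\epsilon_j=\epsilon_{j+1}=1$ forces $c_j=c_{j+1}$;
\item $\epsilon_j=\epsilon_{j+1}=\epsilon_{j-2}=-1$ forces $c_{j-1}=c_{j+2}$.
\end{itemize}
No sign pattern on a $5$-cycle avoids both. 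Only then is your normal form $C=\{0,a,2a,b,c\}$ legitimate. After that, the remaining eight labels must pair into four classes of size exactly two, which is a finite check.

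Two minor slips. An even type means $\frac v2\notin\Delta C$ outright, so there is no involution to count separately. And in (a) it is $2x$, not $3x$, that has multiplicity $3$.
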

\begin{proof}
(a) We have $\bar\Delta C = \pm\{x,2x,3x,4x,6x\}$. In order to have a type $10$ codeword-set these quantities must be pairwise distinct: assume that $ix = \pm jx$ for some $i,j \in \{1,2,3,4,6\}$; then $ord(x) \mid (i-j)$ or $ord(x) \mid (i+j)$. 
So, a codeword-set of type $10$ implies $ord(x) >10$. Since $ord(x) = 12$ would imply that $6x$ is an involution, we also require $ord(x) \neq 12$.
The codeword-set $C$ is then of type $10$ whenever $ord(x) \geq 11$, and $ord(x) \neq 12$.

(b) and (c) We have $\bar\Delta C = \pm\{x,2x,3x,4x,5x\}$; as for case (a), these differences are not distinct for $ord(x) \mid (i-j)$ or $ord(x) \mid (i+j)$ with $i,j \in \{1,2,3,4,5\}$, hence $ord(x) \geq 10$. As for case (a),  $ord(x) = 10$ would imply that $5x$ is an involution, so the statement follows for $ord(x)\geq 11$.

(d)  We have $\bar\Delta C=\pm\{\frac{v}{3},x+\frac{v}{3},x-\frac{v}{3}, x,2x\}$. A direct examination of the possible collisions shows that these differences are distinct for:
\[
2x,3x,4x,6x \neq 0, \qquad x,2x,3x \neq \pm \frac{v}{3}. 
\]
An easy check shows that these conditions are verified for $ord(x) \not\in \{4,6,9\}$. The condition $ord(x) \not \in \{1,2,3\}$ guarantees that the elements in $C$ are distinct.

(e) and (f) By direct computation we get respectively: 
\[
\begin{aligned}&\bar\Delta C = \pm\left\{\frac{v}{11},\frac{2v}{11},\frac{3v}{11}, \frac{4v}{11}, \frac{5v}{11}\right\},\\
&\bar\Delta C = \pm \left\{ \frac{v}{12}, \frac{v}{6}, \frac{v}{4}, \frac{v}{3}, \frac{5v}{12}\right\}.
\end{aligned}
  \]

We now show the proof for the other direction.
In order to have a type $10$ codeword-set $C$ with autocorrelation at most $3$,  the multiplicities of the elements in $\bar\Delta C$ must form an integer partition of $10$ having exactly $5$ terms, where each multiplicity is not greater than $3$. A direct check shows that the partitions of $10$ satisfying these properties are
\begin{equation}\label{eq:partitions}
(3,3,2,1,1) \quad (3,2,2,2,1) \quad(2,2,2,2,2).
\end{equation}
In order to classify admissible codeword-sets, we construct all the vertex labelings of the complete graph $K_5$ whose induced set of differences, up to sign, have multiplicities within one of the integer partitions of \eqref{eq:partitions}.
Since this is a quite laborious check, here we consider in detail the case where an edge label has multiplicity $3$.

Assume that there is a difference $a$ with multiplicity $3$. It can be easily seen that the set of edges with label $\pm a$ is either a cycle of length $3$, a path of length $3$ or a path of length $2$ with an independent edge. In what follows, we examine each case.

If the edges with  label $\pm a$ form a cycle of the length three, then $a=\frac{v}{3}$ and the codeword-set has the form $\{0,a,2a,b,c\}=\{0,a,-a,b,c\}$ for some $b,c \in \mathbb{Z}_v$. We have:
\[
\bar\Delta \{0,a,2a,b,c\} = \pm \{a,b,c,b-a,b+a,c-a,c+a,b-c\}.
\]
It can be easily seen that we may assume, up to relabelling, that $b-a$ is an edge label with multiplicity $2$ or $3$. By direct computations, we have that the admissible equalities are:
\[
\begin{array}{l|l}
\text{Equality}& \text{Codeword-set} \\ \hline
b-a = -c & C^1 = \{0,a,2a,b,a-b\} \\
b-a = -c+a & C^2 = \{0,a,2a,b,2a-b\} \\
b-a = -c-a & C^3 = \{0,a,2a,b,-b\} \\
b-a = -b+c & C^4 = \{0,a,2a,b,2b-a\} \\
\end{array}
\]
The codeword-sets $C^1$ and $C^2$ are equivalent to $C^3$. $C^3$ is of type $10$ for $4b,6b,9b \neq 0$, hence constructing the family of codeword-sets of point (d). A case-by-case examination shows that $C^4$ cannot be completed to a type $10$ codeword-set for any choice of $b$.

If the edges with  label $\pm a$ form a path of length three, then the codeword-set is $\{0,a,2a,3a,b\}$ for some $b \in \mathbb{Z}_v$.  We have:
\[
\bar\Delta \{0,a,2a,3a,b\} = \pm \{a,2a,3a,b-a,b-2a,b-3a,b\}.
\]
If $3a$ has multiplicity greater than $1$, either $3a = -2a$, or at least one of the edges joining $b$ with $\{0,a,2a,3a\}$ has label $3a$. In the former case, $5a=0$, and it follows that $b=4a$, gaining a codeword-set not of type $10$. In the latter, $b\in \{-3a,-2a,5a,6a\}$: the cases $b=-2a$ and $b=5a$ are equivalent, giving the admissible codeword-set $\{0,a,2a,-2a,3a\}$, while $b=-3a$ and $b=6a$ are equivalent, and they give an admissible codeword-set when $ord(a)=11$, that is up to equivalence $\{0,\frac{v}{11},\frac{2v}{11},\frac{3v}{11},\frac{6v}{11}\}$.
Assume now that $3a$ has multiplicity $1$. It can be directly verified that no further edges may receive the label $2a$, and the resulting integer partition is $(3,2,2,2,1)$. A case-by-case check shows that the unique admissible codeword-set has $2b=3a$: setting $x = b-a$ results in the codeword-set $\{0,x,-x,3x,-3x\}$, which is the family of point (a).

If the edges with the label $\pm a$ form a path of length $2$ with an independent edge, the codeword-set is $\{0,a,2a,a+b,2a+b\}$ for some $b \in \mathbb{Z}_v$, and:
\[
\bar\Delta\{0,a,2a,a+b,2a+b\} = \pm \{a,b,a+b,2a,2a+b,a-b\}.
\]
When examining the possible collisions, we have the following admissible cases:
\[
\begin{aligned}
2a = -2a-b,& \qquad 2a = b-a, \\
2a+b = -b-a, &\qquad a-b = b.\\
\end{aligned}
\]
If $b=-4a$ or $b = 3a$ we obtain, up to equivalence, the codeword-set $\{0, a,2a,4a,5a\}$, that has type $10$ when $ord(a)\geq 11$. 
If $2b = -3a$ or $a = 2b$ the resulting codeword-set is $\{0,b,2b,-2b,3b\}$, that is in case (b).

One can reason in a similar way when each difference has multiplicity two in $K_5$.
\end{proof}

\subsection{Optimal $(v,5,3,1)$-OOC of small order}

Following the terminology used in several papers, for instance \cite{BMP, BPW,MB,WC},  we will say that a $(v,5,3,1)$-OOC $\mathcal{C}$ is \emph{optimal} when its size reaches the upper bound given by Theorem \ref{thm:bound}. We will also say that $\mathcal{C}$ is \emph{perfect} when it is optimal and the only missing difference in $\Delta \mathcal{C}$ is zero. We have to point out that in \cite{BT16} the authors use the term optimal with a different meaning. Indeed, they say that a $(v,k,\lambda_a,\lambda_c)$-OOC is optimal when it has maximum size. Note that an optimal code according to our definition is also an optimal code according to the definition given in \cite{BT16}, while the converse in general is not true. 
Looking at Table 1 of \cite{BT16}, in view of the bound established in Theorem \ref{thm:bound}, we can establish that there exists an optimal $(v,5,3,1)$-OCC with $29\leq v\leq 100$ if and only if $v=29,37,39,49,55,58,59,65,67,68,69,75,77,78,79,84,87,89,95,97,98,99$. For all other values of $v$ with $29\leq v\leq 100$ there exists a $(v,5,3,1)$-OOC with fails optimality by missing just one codeword-set.
Other optimal $(v,5,3,1)$-OOCs of small order can be found noticing that for some values of $v$ the blocks of an optimal $(v,5,2,1)$-OOCs can also be viewed as the blocks of an optimal $(v,5,3,1)$-OOC. Into details, the following holds.
\begin{proposition}
Let $v\in\{12,13,14,15,16,17,18,19,29,37,39\}$, then
there exists an optimal  $(v,5,3,1)$-OOC. 
\end{proposition}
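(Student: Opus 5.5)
The plan is to exploit the remark preceding the statement: a $(v,5,2,1)$-OOC is in particular a $(v,5,3,1)$-OOC. Lowering the autocorrelation bound from $2$ to $3$ only weakens condition (\ref{AutoPropBis}), and condition (\ref{CrossPropBis}) is the same. So it suffices to show two things for each listed $v$: that an optimal $(v,5,2,1)$-OOC exists, and that its size equals the bound of Theorem \ref{thm:bound}.

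First I compute the bound of Theorem \ref{thm:bound} for the listed values. None of $12,\dots,19,29,37,39$ is $\equiv 8 \pmod{40}$, $\equiv 9\pmod{90}$, $\equiv108\pmod{180}$ or $\equiv 216\pmod{360}$. Hence the bound is $\lfloor v/10\rfloor$, which gives $s=1$ for $12\le v\le 19$ and $s=2,3,3$ for $v=29,37,39$.

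Next I compare with the $(v,5,2,1)$ bound of \cite{BPW}, which is $\lfloor v/12\rfloor$ or $\lceil v/12\rceil$. For $12\le v\le 19$ we have $\lfloor v/12\rfloor=1$. For $v=29,37,39$ we have $\lceil v/12\rceil=3,4,4$, so the $(5,2,1)$ bound is at least $2,3,3$ respectively. It therefore remains to exhibit codes of the required sizes.

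For $12\le v\le 19$ one set suffices. I take $C=\{0,1,2,4,7\}$ and check $\mu(C)\le 3$ in $\mathbb{Z}_v$ for each $v$. Its differences are $1,2,4,7,1,3,6,2,5,3$. These already have multiplicity at most $2$ over the integers, and reduction mod $v\ge 12$ only identifies $d$ with $v-d$. An alternative is to cite the optimal $(v,5,2,1)$-OOCs in \cite{BPW}.

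For $v=29,37,39$ I exhibit explicit difference-disjoint families:
\begin{itemize}
\item two sets in $\mathbb{Z}_{29}$;
\item three sets in $\mathbb{Z}_{37}$ and in $\mathbb{Z}_{39}$.
\end{itemize}
Each set must have $\mu\le 2$ (or $\le 3$), and the supports must be pairwise disjoint. The natural source is again the optimal $(v,5,2,1)$-OOCs listed in \cite{BPW} or \cite{BT13}, which I would quote. Otherwise I would find them by a short computer search and list them, verifying disjointness of $\bar\Delta C_i$ directly.

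The main obstacle is the three cases $v=29,37,39$, which need packings of nearly all nonzero differences. For instance, for $v=37$ three sets of type $\ge 10$ use at least $30$ of the $36$ nonzero residues. Finding such families by hand is delicate, so I would rely on the published $(v,5,2,1)$ tables, or on a computer search with verification displayed.
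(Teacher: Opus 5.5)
Your proposal is correct and follows essentially the paper's route: the optimal $(v,5,2,1)$-OOCs of \cite{BPW} are $(v,5,3,1)$-OOCs whose size $\lfloor v/12\rfloor$ coincides with the bound $\lfloor v/10\rfloor$ of Theorem~\ref{thm:bound} for every listed $v$, and your explicit set $\{0,1,2,4,7\}$ for $12\le v\le 19$ is only a cosmetic variation on citing \cite{BPW} there too. For that set, make the multiplicity count explicit, since ``multiplicity at most $2$ over the integers'' alone would not rule out $2+2=4$: an entry $d$ and an entry $-d'$ of $\Delta C$ (with $1\le d,d'\le 7$) coincide in $\mathbb{Z}_v$ only if $d+d'=v\ge 12$, which forces $d,d'\in\{5,6,7\}$ (each of multiplicity one), so in fact $\mu(C)\le 2$.
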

\begin{proof}
Let $v$ be as in the statement, in \cite{BPW} (see Table 1) the authors prove that an optimal $(v,5,2,1)$-OOC exists for these values of $v$.
Also, by Theorem 2.1 of \cite{BPW}, for these values of $v$,  an upper bound for the size of a $(v,5,2,1)$ is $\lfloor{\frac{v}{12}}\rfloor$, and this is the size of the optimal $(v,5,2,1)$-OOC constructed.
By the hypothesis on $v$, we have that $\lfloor{\frac{v}{10}}\rfloor=\lfloor{\frac{v}{12}}\rfloor$ and hence, in view of Theorem \ref{thm:bound}, the blocks of the optimal $(v,5,2,1)$-OOCs constructed in \cite{BPW} are also the blocks of optimal $(v,5,3,1)$-OOCs.
\end{proof}

\begin{example}
It is easy to check that $C_1=\{0,1,-1,6,-6\}$, $C_2=\{0,8,-8,11,-11\}$ and $C_3=\{0,10,-10,14,-14\}$ are the blocks of a perfect $(37,5,2,1)$-OOC. Since a $(37,5,3,1)$-OOC with four blocks cannot exist by Theorem \ref{thm:bound}, we have that $\{C_1,C_2,C_3\}$ is also a perfect $(37,5,3,1)$-OOC.
\end{example}



\section{Cycle/bull/tadpole difference families and $(v,5,3,1)$-OOCs}\label{sec:graph}

In \cite{MB} the authors show that in most cases an optimal $(v,4,2,1)$-OOC can be described in terms of a suitable graph labeling, establishing, in this way, a connection between OOCs and cyclic decompositions of a complete multipartite graph. This idea has been further developed in \cite{BMP}. In this section we describe optimal $(v,5,3,1)$-OOCs in terms of suitable \emph{cycle labelings} or, equivalently, of \emph{bull labelings} or even in terms of \emph{tadpole labelings}.

\begin{figure}[ht]
    \centering 
     \includegraphics[width=0.3\textwidth]{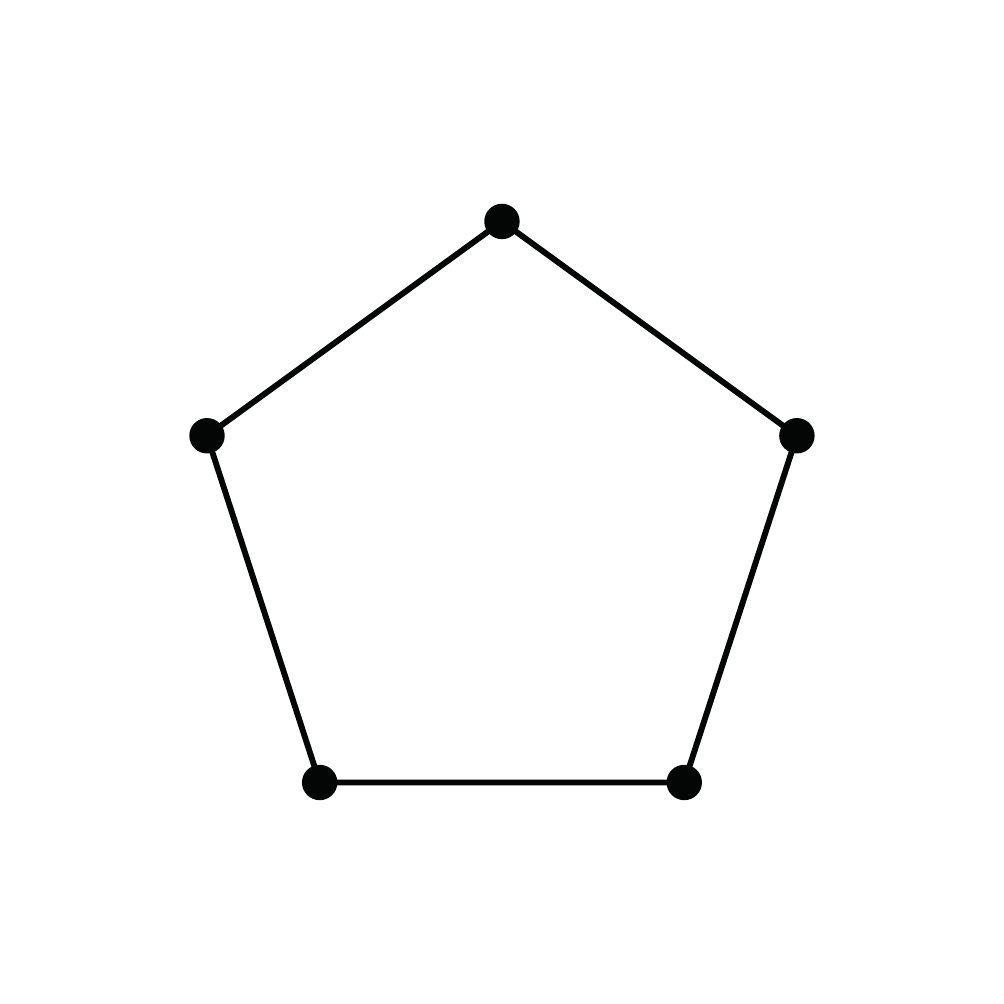}\quad 
    \includegraphics[width=0.3\textwidth]{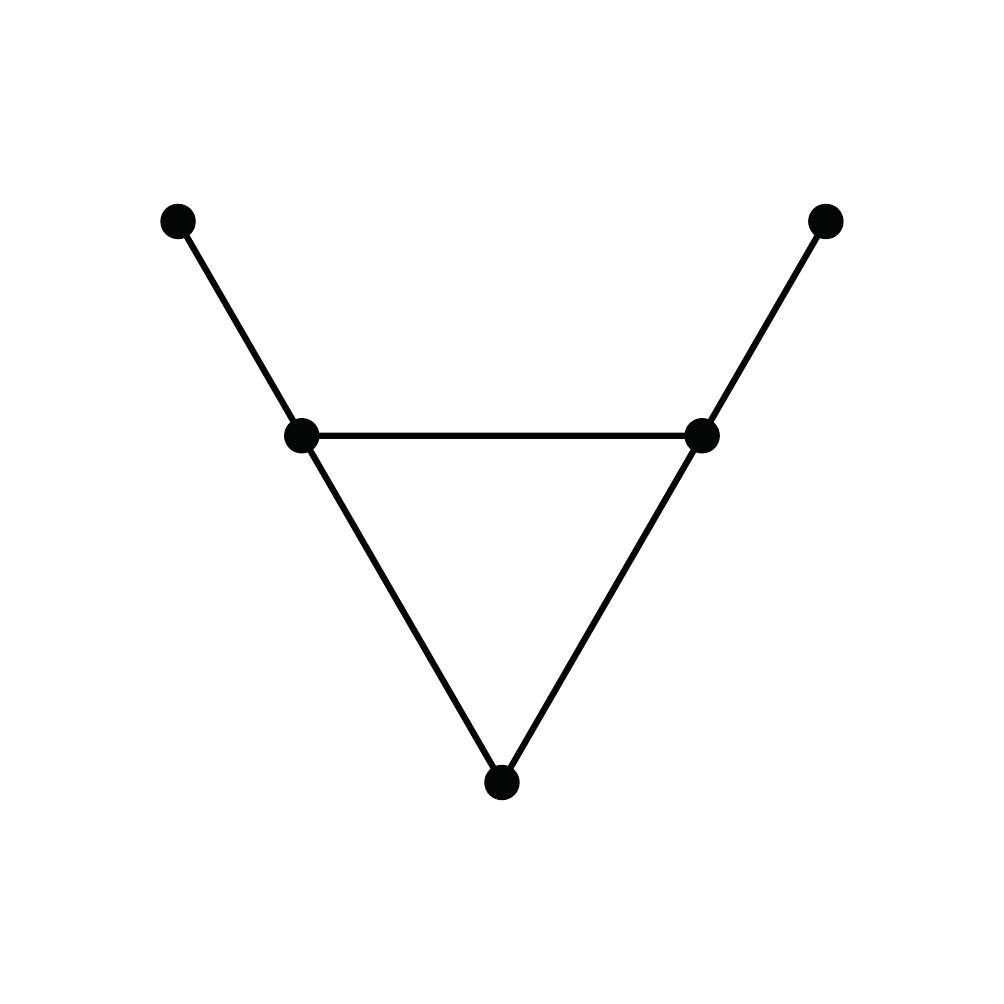} 
      \caption{The cycle of length five $C_5$ and the bull graph $G$.}
    \label{fig:C5B}  
\end{figure}

\begin{figure}[ht]
    \centering 
     \includegraphics[width=0.4\textwidth]{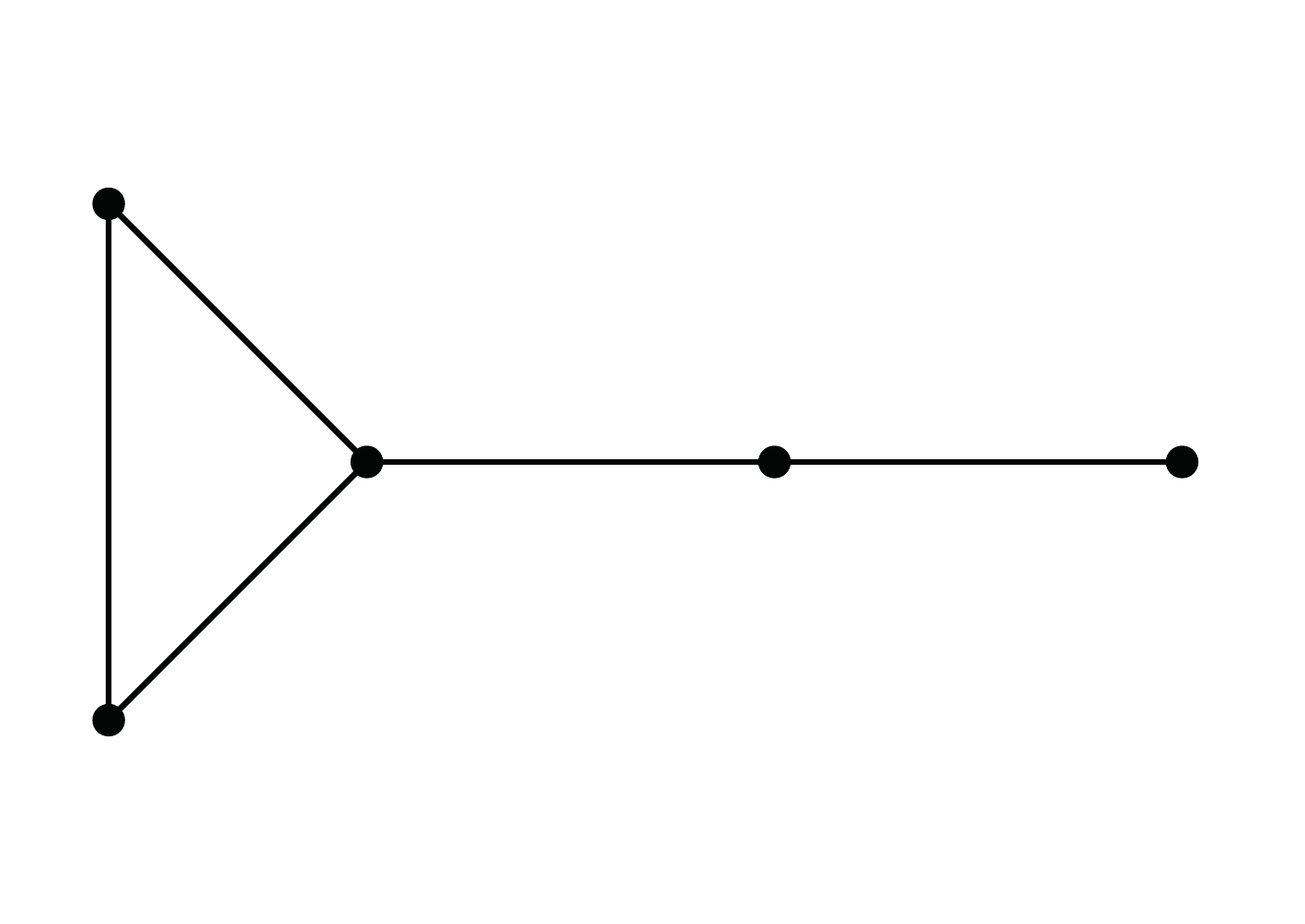}\quad 
    \includegraphics[width=0.4\textwidth]{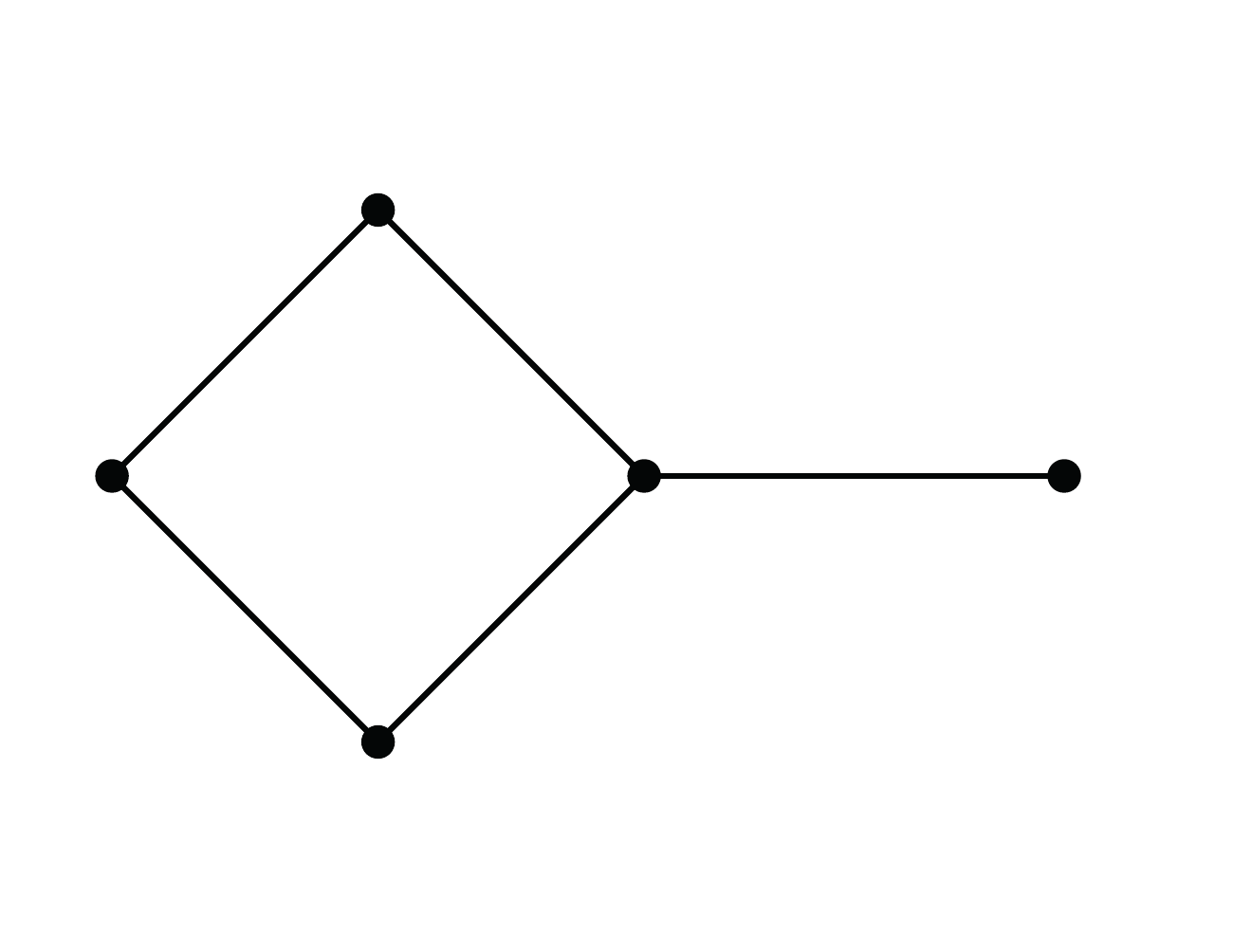} 
    \caption{The tadpole graphs $T_{3,2}$ and $T_{4,1}$.}
    \label{fig:tadpole}
\end{figure}

The graphs $C_5$ and $G$ of Figure \ref{fig:C5B} are the cycle of length $5$ and the so called \emph{bull graph}, respectively. Also, we recall that the $(m,n)$-\emph{tadpole graph}, denoted by $T_{m,n}$, is the graph consisting of a cycle of length $m\geq3$ and a path on $n$ vertices connected by a bridge. $T_{3,2}$ and $T_{4,1}$ are represented in Figure \ref{fig:tadpole}. We say that a cycle $C_5$ with vertices in $\mathbb{Z}_v$ is \emph{good} if it is of the form shown in Figure \ref{fig:C5xGx} for some element $x\in \mathbb{Z}_v$. We will denote such a good cycle by $C_5^x$. Analogously, we say that the bull graph with vertices in $\mathbb{Z}_v$ is \emph{good} if it is of the form shown in Figure \ref{fig:C5xGx} for some $x\in \mathbb{Z}_v$ and we will denote it by $G^x$. Finally, we say that the tadpole graphs $T_{3,2}$ and $T_{4,1}$ with vertices in $\mathbb{Z}_v$ are \emph{good} if they are of the form shown in Figure \ref{fig:tadpolex} for some $x\in \mathbb{Z}_v$ and we will denote them by $T_{3,2}^x$ and $T_{4,1}^x$, respectively. 

\begin{figure}[ht]
    \centering 
     \includegraphics[width=0.3\textwidth]{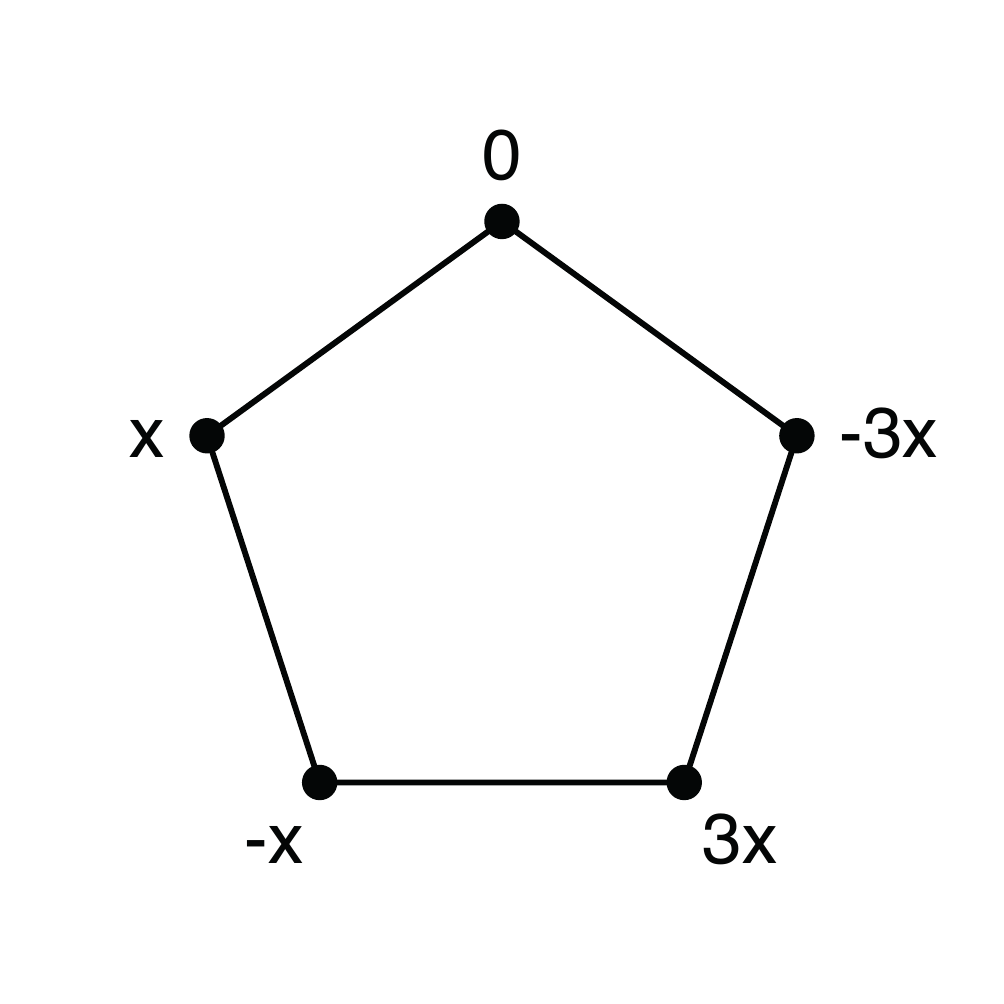}\quad\quad
    \includegraphics[width=0.3\textwidth]{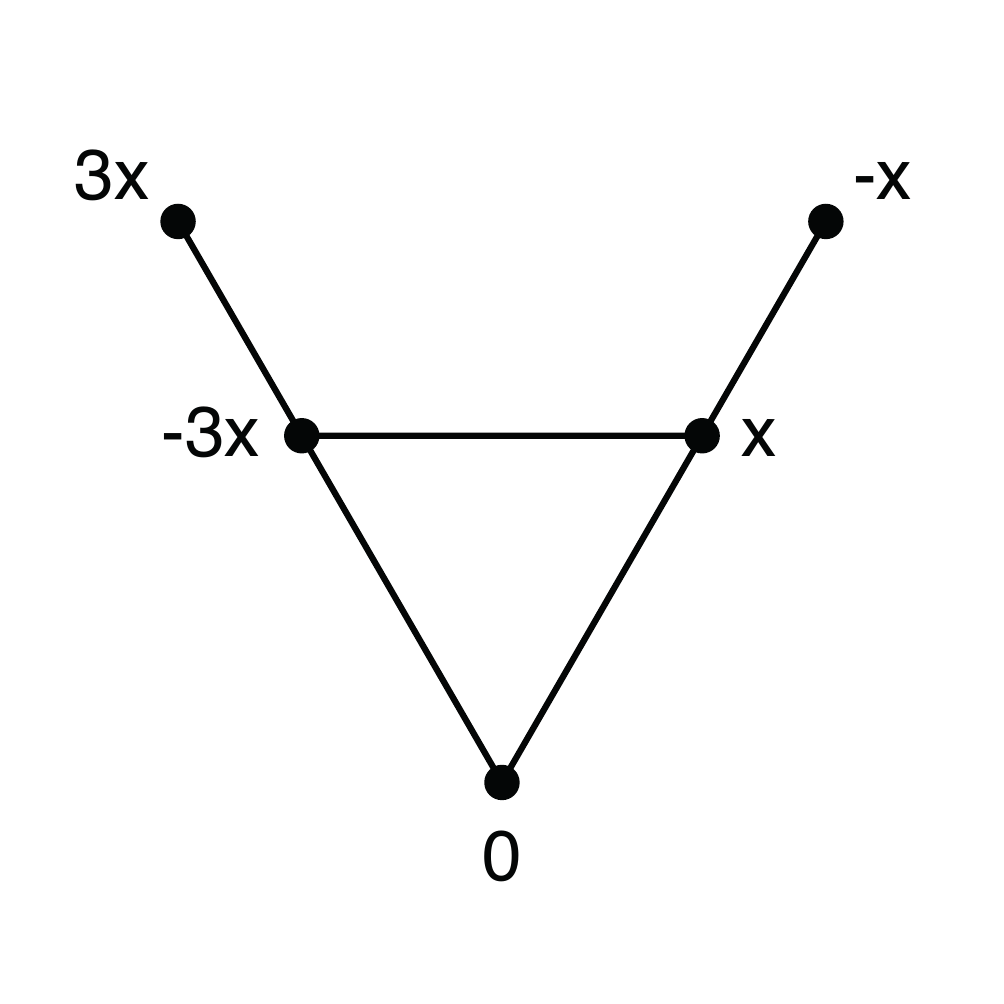}
    \caption{The good cycle $C_5^x$ and the good bull graph $G^x$.}
    \label{fig:C5xGx}
\end{figure}

\begin{figure}[ht]
    \centering 
     \includegraphics[width=0.4\textwidth]{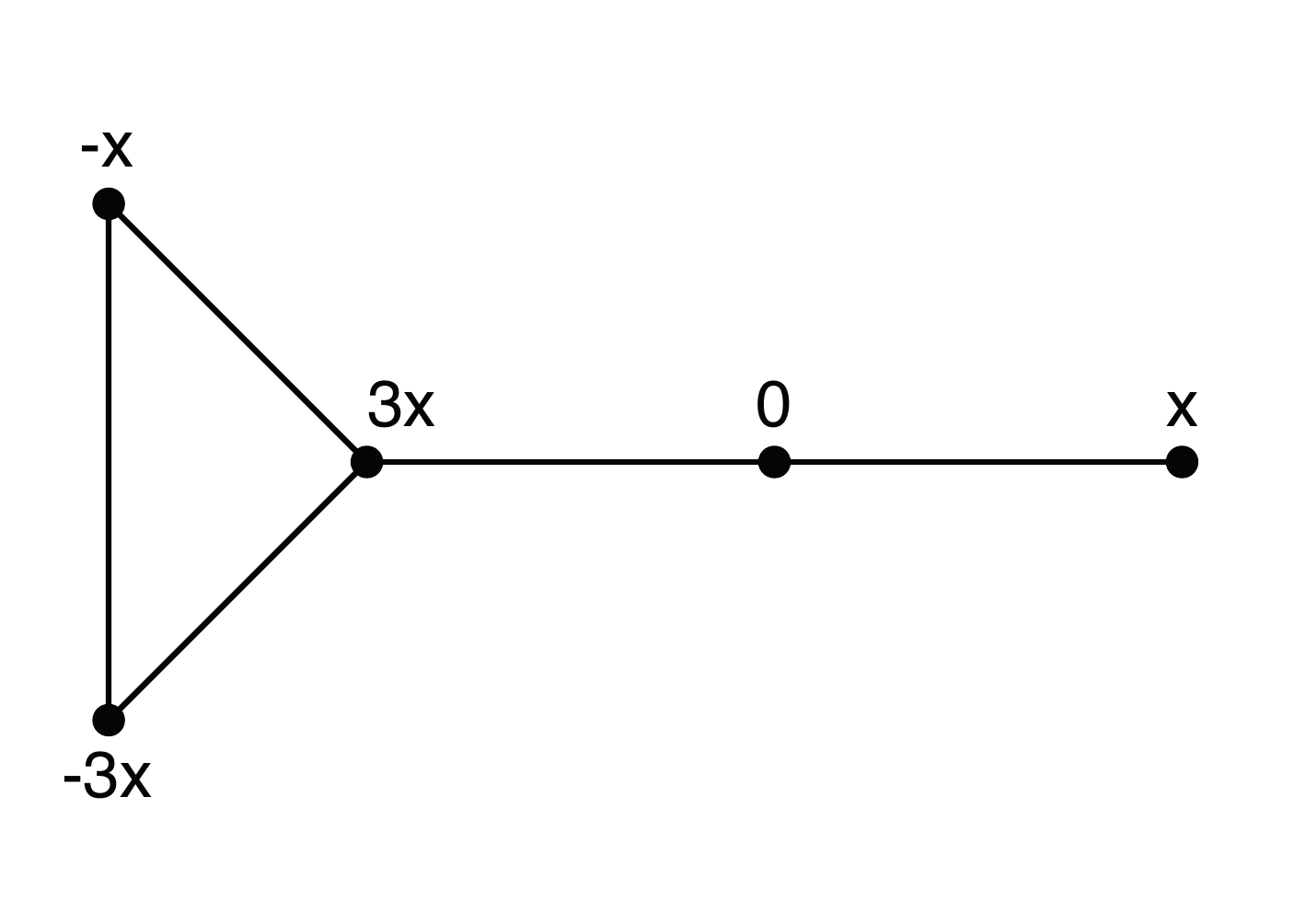}\quad 
    \includegraphics[width=0.4\textwidth]{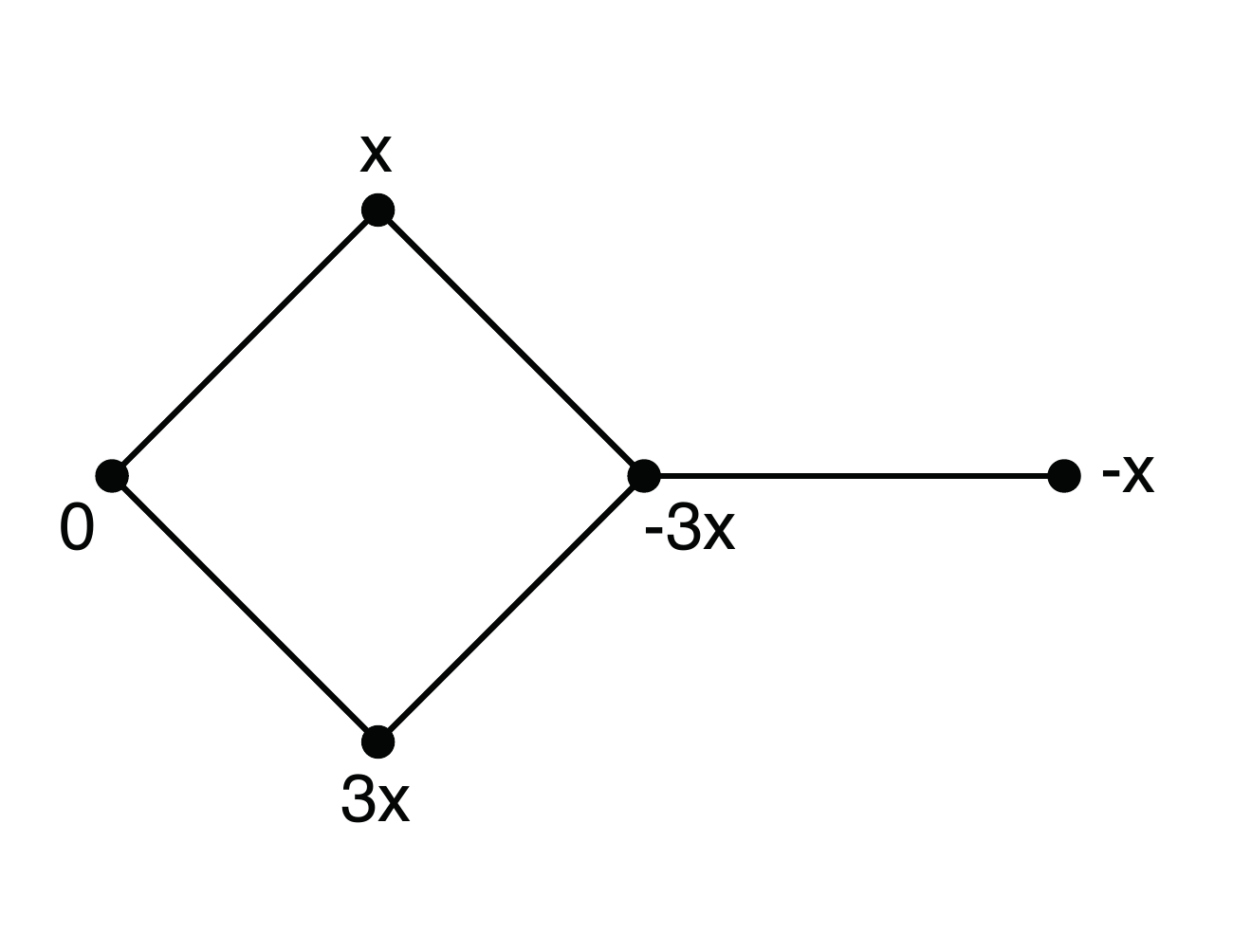} 
    \caption{The  good tadpole graphs $T_{3,2}^x$ and $T_{4,1}^x$.}
    \label{fig:tadpolex}
\end{figure}

Now we recall the concept of a difference family over graphs, for further details see \cite{BP}. Given a simple graph $\Gamma$ with vertices in $\mathbb{Z}_v$, the \emph{list of differences} of $\Gamma$ is the multiset $\Delta\Gamma$ containing all possible differences $x-y$ with $(x,y)$ an ordered pair of adjacent vertices of $\Gamma$. Given a graph $\Gamma$ and two positive integers $m$ and $v$, a $(mv,m,\Gamma,1)$ \emph{difference family} (DF, in brief) is a collection of graphs, called blocks, all isomorphic to $\Gamma$, with vertices in $\mathbb{Z}_{mv}$ whose list of differences contains exactly once each element of $\mathbb{Z}_{mv}\setminus \{0,v,2v,\ldots,(m-1)v\}$.
When $m=1$ we find the concept of a $(v,\Gamma,1)$-DF, say $\mathcal{F}$. Since in this case $\Delta \mathcal{F}=\mathbb{Z}_v\setminus \{0\}$, a necessary condition for its existence is $v\equiv 1\pmod{2|E(\Gamma)|}$.
Also, if $m=1$ and $\Gamma$ is the complete graph of order $k$ we find again the classical concept of a $(v,k,1)$-DF, see \cite{AB}.

We will say that a $(mv,m,\Gamma,1)$-DF, where $\Gamma$ is the cycle of length $5$, or the bull graph or the tadpole graph $T_{3,2}$ or $T_{4,1}$, is \emph{good} if all its blocks are good. Note that if the $5$-subset $C_x=\{0,x,-x,3x,-3x\}$ is of type $10$, then the underlying set of its list of differences coincides with the list of differences of the good cycle $C_5^x$, along with that of the good bull graph $G^x$ and with that of the good tadpole graphs $T_{3,2}^x$ and $T_{4,1}^x$. In other words, we have
$\bar\Delta C_x=\Delta C_5^x=\Delta G^x=\Delta T_{3,2}^x=\Delta T_{4,1}^x$.
It follows that a good $(mv,m,C_5,1)$-DF $\{C_5^{x_1},\ldots, C_5^{x_t}\}$ is equivalent to a good $(mv,m,G,1)$-DF $\{G^{x_1},\ldots, G^{x_t}\}$, to a good $(mv,m,T_{3,2},1)$-DF $\{T_{3,2}^{x_1},\ldots, T_{3,2}^{x_t}\}$ and to a good $(mv,m,T_{4,1},1)$-DF $\{T_{4,1}^{x_1},\ldots, T_{4,1}^{x_t}\}$, and that it gives a $(mv,5,3,1)$-OOC whose codeword-sets are $C_{x_1},\ldots,C_{x_t}$.

In particular the following holds.
\begin{proposition}
Suppose there exists a good $(mv,m,\Gamma,1)$-DF with $1\leq m\leq 5$ or $m=7$, and with $\Gamma=C_5^x$ or $G^x$ or $T_{3,2}^x$ or $T_{4,1}^x$. Then there exists an optimal $(mv,5,3,1)$-OOC.
\end{proposition}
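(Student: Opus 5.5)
The plan is to turn the good difference family into a code via the equivalence noted just before the statement, and then compare its size with the bound of Theorem \ref{thm:bound}.

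\textbf{Step 1: the code.} Let $\mathcal F=\{\Gamma^{x_1},\ldots,\Gamma^{x_t}\}$ be the given good $(mv,m,\Gamma,1)$-DF. Since each block has $5$ edges, $\Delta\Gamma^{x_i}$ has $10$ elements. These lists together cover $\mathbb{Z}_{mv}\setminus\{0,v,\ldots,(m-1)v\}$ exactly once, so
$$10t=mv-m=m(v-1).$$
As remarked above, $\Delta\Gamma^{x_i}=\bar\Delta C_{x_i}$ with $C_{x_i}=\{0,x_i,-x_i,3x_i,-3x_i\}$. Each multiplicity of $\Delta C_{x_i}$ is at most $3$: the $\pm$ pattern has multiplicities $(3,2,2,2,1)$ as computed in Proposition \ref{prop:tipo10}(a). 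This can also be checked directly: $\pm 2x$ comes from $(x,-x)$, $(3x,x)$, $(-x,-3x)$, and the pattern for the other differences is similar. The only thing to confirm is that the $10$ edge differences are pairwise distinct. This holds because each element occurs once in $\Delta\mathcal F$, so $C_{x_i}$ has type $10$. Since the supports are pairwise disjoint, the family $\{C_{x_1},\ldots,C_{x_t}\}$ is difference-disjoint and is therefore an $(mv,5,3,1)$-OOC of size $t=m(v-1)/10$.

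\textbf{Step 2: comparison with the bound.} Write $n=mv$, so that $t=(n-m)/10$. For $1\le m\le 9$ we get $t=\lfloor n/10\rfloor$ as soon as $10\mid n-m$, because $0\le m<10$. Hence $t$ equals the bound of Theorem \ref{thm:bound} in every case where that bound is the floor. The remaining cases are those $n$ in the exceptional classes, where the bound is $\lceil n/10\rceil$. Here $n\equiv 8,9,8,6\pmod{10}$ respectively, so $n\not\equiv 0 \pmod{10}$ and the ceiling equals the floor plus $1$.

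So it suffices to show that for $m\in\{1,2,3,4,5,7\}$, $n=mv$ never lies in $8+40\mathbb{Z}$, $9+90\mathbb{Z}$, $108+180\mathbb{Z}$ or $216+360\mathbb{Z}$. Note that $n\equiv m\pmod{10}$.
\begin{itemize}
\item The class $8\pmod{40}$ would force $m\equiv 8\pmod{10}$, which is impossible.
\item The class $9\pmod{90}$ would force $m\equiv 9$, impossible.
\item The class $108\pmod{180}$ would force $m\equiv 8$, impossible.
\item The class $216\pmod{360}$ would force $m\equiv 6$, impossible.
\end{itemize}
This is exactly why $m=6,8,9$ are excluded from the statement. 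Therefore $t$ attains the bound and the code is optimal.

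\textbf{Main obstacle.} The only delicate point is Step 1: verifying that $\mu(C_{x_i})\le 3$ and that the support of $\Delta C_{x_i}$ coincides with the edge-difference list of the good graph, including the involution case. If $mv$ is even, the element $mv/2$ could appear, and it would then have to appear twice in $\Delta\Gamma^{x_i}$. This would contradict the fact that each element is covered once, so exact covering rules it out automatically. The residue computation in Step 2 is routine.
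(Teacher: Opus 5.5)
Your proof is correct and follows the paper's argument. The code has size $(mv-m)/10=\lfloor mv/10\rfloor$ because $m\le 9$, and the four exceptional classes of Theorem \ref{thm:bound} would force $m\equiv mv\equiv 8,9,8,6 \pmod{10}$, which is impossible for $m\in\{1,\dots,5,7\}$. Your Step 1 also spells out why the sets $C_{x_i}$ form an OOC: exact covering gives type $10$, and the multiplicities are $(3,2,2,2,1)$. The paper instead takes this from the discussion preceding the statement.
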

\begin{proof}
Let $\mathcal{C}$ be the $(mv,5,3,1)$-OOC associated with a $(mv,m,\Gamma,1)$-DF with $\Gamma$ as in the statement. Then $\mathcal{C}$ has size $q=\frac{mv-m}{10}$, which implies $mv=10q+m$. Since by assumption $m$ does not exceed $9$,
we have that $q$ and $m$ are nothing but the quotient and the remainder of the Euclidean division of $mv$ by $10$. Hence $q=\lfloor \frac{mv}{10} \rfloor$. So, in view of Theorem \ref{thm:bound}, to prove that the code $\mathcal{C}$ is optimal and we have to exclude that one of the following cases can occur:
\begin{itemize}
    \item[(1)] $mv\equiv 8\pmod{40}$
    \item[(2)] $mv\equiv 9\pmod{90}$
    \item[(3)] $mv\equiv 108\pmod{180}$   
    \item[(4)] $mv\equiv 216\pmod{360}$
\end{itemize}
Recalling that $mv=10q+m$, we have that Cases (1) and (3) would imply $m=8$, while Cases (2) and (4) would imply $m=9$ and $m=6$, respectively. In any case we contradict the assumption on $m$, hence the code is optimal.
\end{proof}

\begin{corollary}
    If there exists a good $(v,\Gamma,1)$-DF with $\Gamma=C_5^x$ or $G^x$ or $T_{3,2}^x$ or $T_{4,1}^x$, then there exists a perfect $(v,5,3,1)$-OOC.
\end{corollary}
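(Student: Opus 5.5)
This is the case $m=1$ of the preceding proposition, with one extra check: perfection. Suppose there is a good $(v,\Gamma,1)$-DF $\mathcal{F}=\{\Gamma^{x_1},\ldots,\Gamma^{x_t}\}$, where $\Gamma$ is $C_5$, $G$, $T_{3,2}$ or $T_{4,1}$. Each good block has $5$ edges, so it contributes $10$ differences. Since $\Delta\mathcal{F}=\mathbb{Z}_v\setminus\{0\}$ with every element appearing exactly once, we get $v-1=10t$, that is, $v\equiv 1\pmod{10}$.

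The first step is to build the code. As observed before the preceding proposition, $\bar\Delta C_{x_i}=\Delta\Gamma^{x_i}$, where $C_{x_i}=\{0,x_i,-x_i,3x_i,-3x_i\}$. The differences of $\mathcal{F}$ are all distinct, so the ten elements of each $\Delta\Gamma^{x_i}$ are distinct. Hence $C_{x_i}$ has type $10$, and by Proposition \ref{prop:tipo10}(a) we have $\mu(C_{x_i})\le 3$, which gives the autocorrelation condition (\ref{AutoPropBis}). Two distinct blocks have disjoint lists of differences, so $\bar\Delta C_{x_i}\cap\bar\Delta C_{x_j}=\emptyset$ for $i\neq j$. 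This is exactly the cross-correlation condition (\ref{CrossPropBis}). So $\{C_{x_1},\ldots,C_{x_t}\}$ is a $(v,5,3,1)$-OOC of size $t=\frac{v-1}{10}$.

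The second step is optimality. Apply the preceding proposition with $m=1$, or argue directly. Since $v\equiv1\pmod{10}$, we have $\lfloor v/10\rfloor=(v-1)/10=t$. Moreover $v$ lies in none of the classes $8\pmod{40}$, $9\pmod{90}$, $108\pmod{180}$, $216\pmod{360}$, because each of these forces $v\equiv 8$, $9$ or $6\pmod{10}$. So by Theorem \ref{thm:bound} the bound is $\lfloor v/10\rfloor=t$, and the code is optimal.

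The last step is perfection. The union of the supports is $\bigcup_i\bar\Delta C_{x_i}=\bigcup_i\Delta\Gamma^{x_i}=\Delta\mathcal{F}=\mathbb{Z}_v\setminus\{0\}$, so zero is the only missing difference and the code is perfect. The one point needing care is the type-$10$ claim. It does not require checking $ord(x_i)\ge 11$, $\neq 12$ separately, because distinctness of the ten differences of each block already follows from the DF property.
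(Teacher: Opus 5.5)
Your proof is correct and matches the paper's argument, which the paper leaves implicit: the corollary is the $m=1$ case of the preceding proposition, and perfection follows because $\Delta\mathcal{F}=\mathbb{Z}_v\setminus\{0\}$, so the supports of the $C_{x_i}$ cover every nonzero element. You also make explicit a step the paper passes over: the list $\Delta\Gamma^{x_i}$ is the multiset $\pm\{x_i,2x_i,3x_i,4x_i,6x_i\}$, so the DF property itself forces these ten elements to be distinct, which gives each $C_{x_i}$ type $10$ and $\mu(C_{x_i})\le 3$.
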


Clearly, one can do the same reasoning with the other codeword-sets of type $10$. For example if $C=\{0,x,2x,-2x,3x\}$ one can check that the associated graphs are the ones described in Figure \ref{fig:2tipo10}.

\begin{figure}[ht]
    \centering 
     \includegraphics[width=0.23\textwidth]{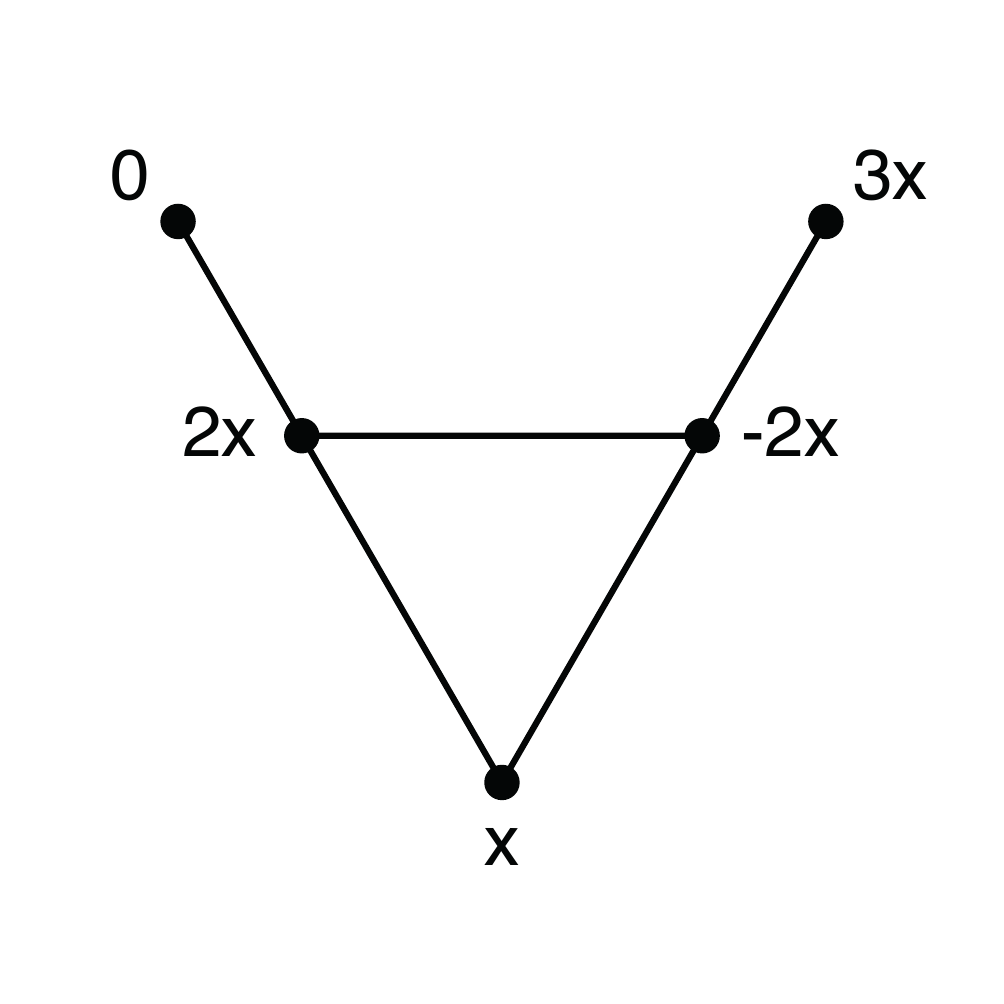}\quad 
    \includegraphics[width=0.32\textwidth]{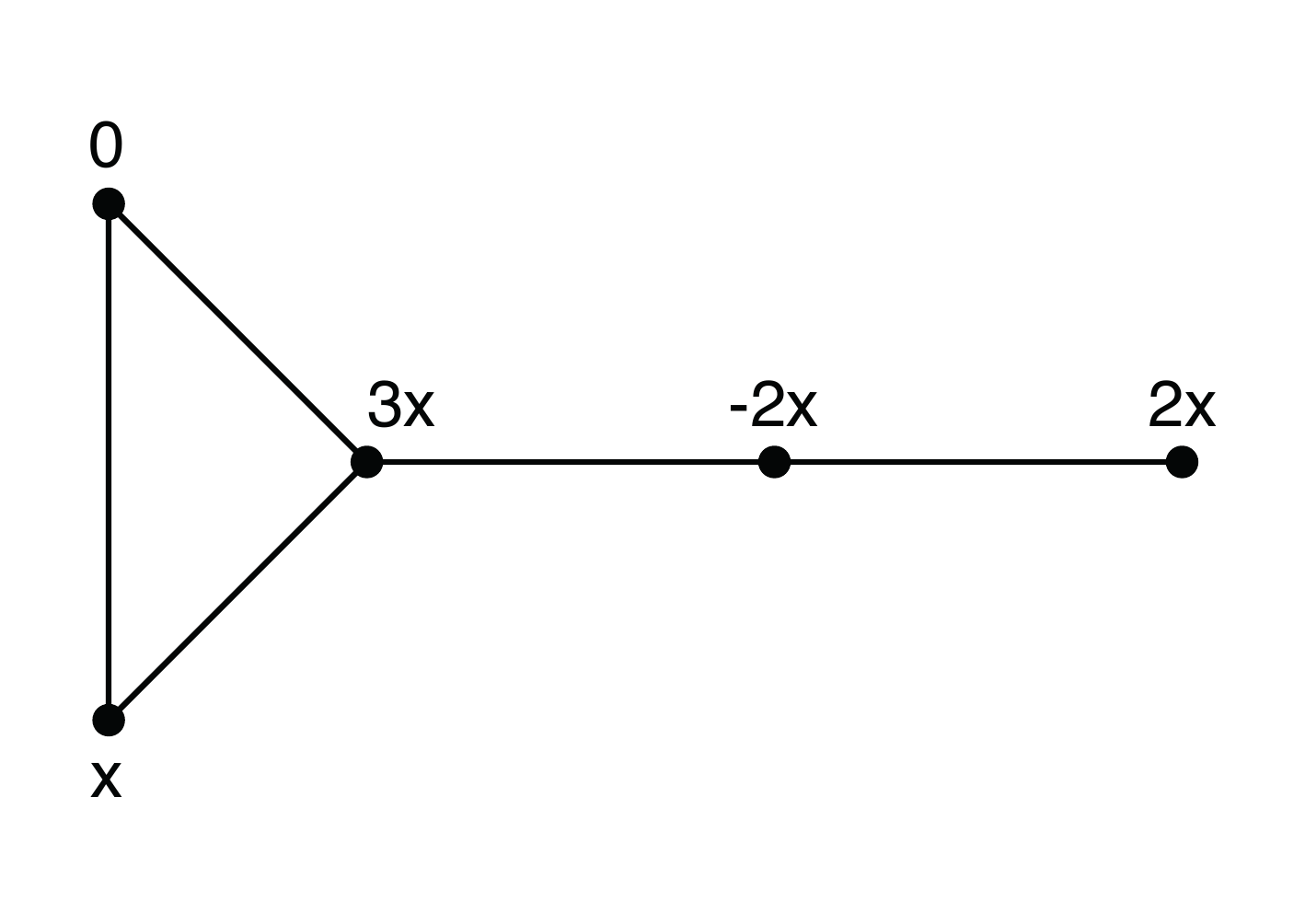} \quad
        \includegraphics[width=0.33\textwidth]{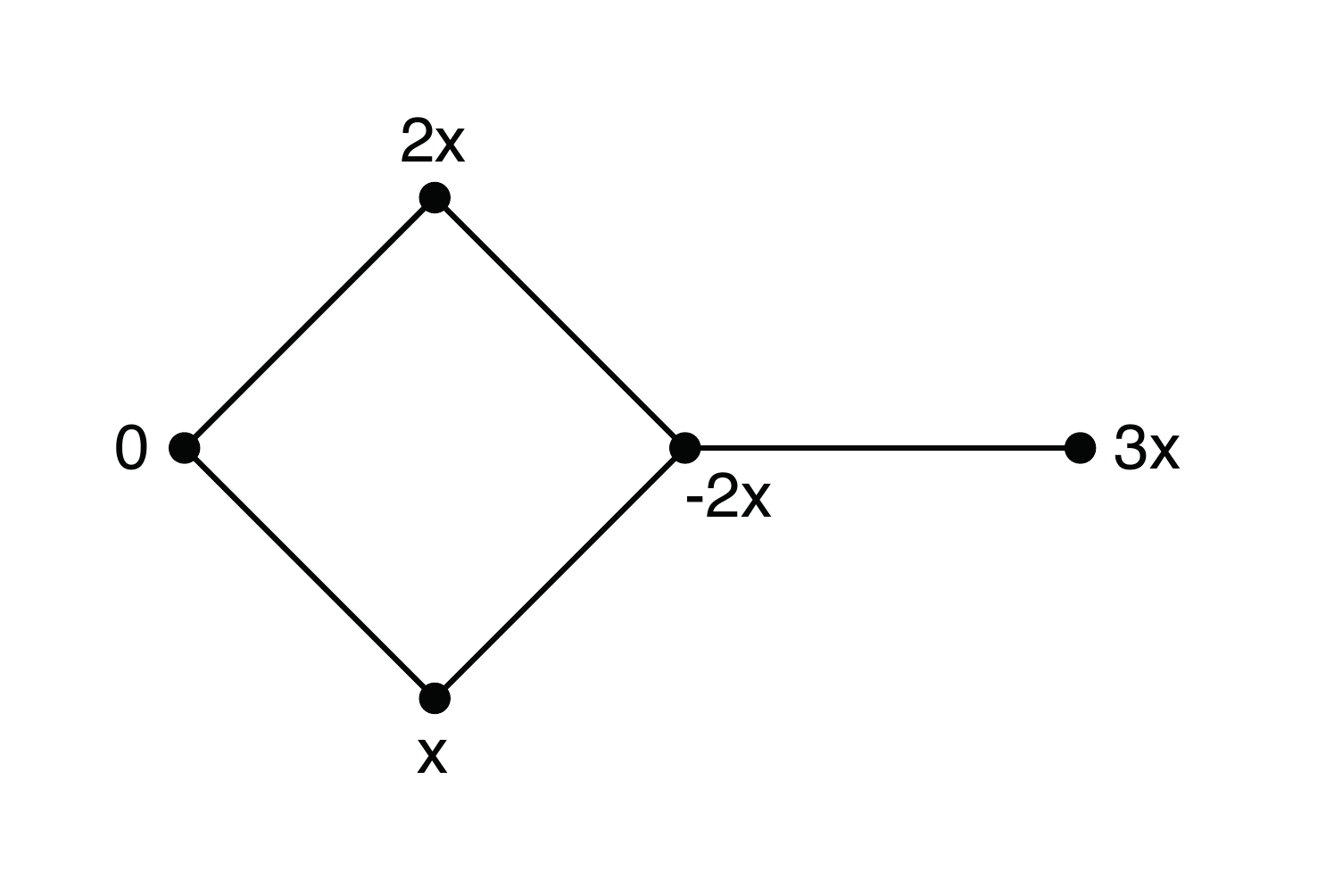}
        \caption{Graphs associated to $C=\{0,x,2x,-2x,3x\}$.}
    \label{fig:2tipo10}
\end{figure}
Hence the following holds:
\begin{proposition}
Suppose there exists a good $(mv,m,\Gamma,1)$-DF with $1\leq m\leq 5$ or $m=7$, where $\Gamma$ is one of the graphs of Figure \ref{fig:2tipo10}. Then there exists an optimal $(mv,5,3,1)$-OOC.
\end{proposition}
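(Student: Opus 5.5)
The argument is the same as for the analogous proposition for $C_5^x$, $G^x$, $T_{3,2}^x$ and $T_{4,1}^x$. The only new ingredient is that each graph of Figure \ref{fig:2tipo10}, built on $C=\{0,x,2x,-2x,3x\}$, has list of differences equal to $\bar\Delta C=\pm\{x,2x,3x,4x,5x\}$.

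\emph{Step 1.} For each of the three graphs in Figure \ref{fig:2tipo10}, I will check that its five edges have differences $\pm x,\pm 2x,\pm 3x,\pm 4x,\pm 5x$, one each. I will then argue that, in a good $(mv,m,\Gamma,1)$-DF, every block $\Gamma^{x_i}$ has ten distinct differences and that these lists are pairwise disjoint across blocks. Distinctness forces $ix\neq\pm jx$ and $5x\neq -5x$, hence $ord(x_i)\geq 11$. By Proposition \ref{prop:tipo10}(b), the set $C_{x_i}=\{0,x_i,2x_i,-2x_i,3x_i\}$ therefore has $\mu(C_{x_i})\leq 3$ and type $10$, with $\bar\Delta C_{x_i}=\Delta\Gamma^{x_i}$. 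This gives the autocorrelation condition (\ref{AutoPropBis}). Disjointness of the lists $\Delta\Gamma^{x_i}$ gives difference-disjointness (\ref{CrossPropBis}). So $\{C_{x_1},\dots,C_{x_t}\}$ is an $(mv,5,3,1)$-OOC.

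\emph{Step 2.} I count the blocks. The family covers $\mathbb{Z}_{mv}\setminus\{0,v,\dots,(m-1)v\}$ exactly once, with ten differences per block, so $t=\frac{mv-m}{10}$ and $mv=10t+m$. Since $1\le m\le 9$, the number $t$ is the quotient and $m$ the remainder of $mv$ upon division by $10$, so $t=\lfloor mv/10\rfloor$.

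\emph{Step 3.} By Theorem \ref{thm:bound}, it remains to rule out the four exceptional classes. Reducing mod $10$:
\begin{itemize}
\item $mv\equiv 8\pmod{40}$ and $mv\equiv 108\pmod{180}$ both force $m=8$;
\item $mv\equiv 9\pmod{90}$ forces $m=9$;
\item $mv\equiv 216\pmod{360}$ forces $m=6$.
\end{itemize}
All of these are excluded by the hypothesis $1\le m\le5$ or $m=7$. Hence the bound is $\lfloor mv/10\rfloor=t$, and the code is optimal.

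The only real work is Step 1: reading off the edge differences of the three pictured graphs and confirming that each gives exactly $\pm\{x,\dots,5x\}$. This is the analogue of the identity $\bar\Delta C_x=\Delta C_5^x=\dots$ noted earlier in the paper, and it is a direct check from the figure.
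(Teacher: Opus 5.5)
Your proposal is correct and follows the paper's route: the paper obtains this proposition by repeating the argument of the preceding one, where the code has $\frac{mv-m}{10}=\lfloor mv/10\rfloor$ codeword-sets and each exceptional class of Theorem~\ref{thm:bound} would force $m\in\{6,8,9\}$. Your Step~1 deduces $ord(x_i)\geq 11$ from the distinctness of the ten differences so that Proposition~\ref{prop:tipo10}(b) applies, which only makes explicit a check the paper leaves implicit.
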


\section{Cyclotomic constructions for perfect $(p,5,3,1)$-OOCs with $p$ prime}

In this section, we present some constructions for optimal and perfect $(p,5,3,1)$-OOCs with $p$ prime. We begin by showing some theoretical arguments that guarantee the existence of such a code by using a classical tiling argument. 

The problem of additive tilings and factorisations of groups by subsets was first introduced by Haj\`os in 1941 \cite{Haj}, and since then it has been investigated in different frameworks, see \cite{SS}. The problem considered here can be viewed as a partial tiling problem in the cyclic group $\mathbb{Z}_{\ell}$.






\begin{TP} 
Given a pair $(\ell,S)$, where $\ell$ is a positive integer and $S$ is a $z$-subset of $\mathbb{Z}_\ell$, with $\ell = z\cdot t+r$, does there exist a $t$-set $T\subset \mathbb{Z}_\ell$ such that $|S+T| = z\cdot t$? 
\end{TP}
If  $r<z,t$,  $T$ is said to be \textit{optimal}, while  $T$ is said to be \textit{perfect} if $r=0$ .
For short, we will refer to the above problem as the  $(\ell,S)$-\emph{tiling problem}.

We point out that the above problem can also be equivalently stated in terms of packings, see the problem on page 459 of  \cite{Bu96}.

The idea of our construction is the following: starting from the finite field ${\rm GF}(p)$ with $p$ prime, we take $g$ to be a primitive element of ${\rm GF}(p)$ and define $D_j^\ell = g^j\langle{g^{\ell}}\rangle$ to be the $j^{th}$ cyclotomic class of order $\ell$, where $0 \leq j \leq \ell-1$. In other words, $D_0^{\ell}$ is the multiplicative subgroup containing $\ell^{th}$ powers in ${\rm GF}(p)$ and each $D_j^{\ell}$ is a unique coset of this subgroup. From the primitive element $g$ of ${\rm GF}(p)$, we compute the discrete logarithms of the set of differences of a codeword-set $C$ modulo $\ell$, and we determine the set of indexes $S$ corresponding to the cyclotomic classes where these differences belong. From $S$ we have to find a solution $T$ to the  $(\ell,S)$-tiling problem. 
If such a solution exists, a  $(p,5,3,1)$-OOC can then be constructed by considering the codeword-sets:
\[
 g^{z+t}\cdot C, \qquad \text{ for $z\in S,$ $t \in T$.}
\]
Under suitable conditions on the parameter $\ell$, see Remark \ref{rem:cond_param_OOC}, we can show that the constructed $(p,5,3,1)$-OOC will be optimal.  Note that in Example \ref{ex:explicit_constr} we show some explicit constructions of optimal OOCs with this method.

In the following lemma we characterise when two non-zero elements belong to distinct cyclotomic classes of order $\ell$.
\begin{lemma}\label{lem:nec_cond}
Let $p\geq 11$ be an odd prime and let $n\ell \mid p-1$ for some positive integers $n,\ell$.  Then for any distinct $x,y \in GF(p)$ \[  xD_0^\ell \cap  yD_0^\ell = \emptyset \quad \Leftrightarrow \quad \{xy^{-1}\} \not\in D_0^\ell.\]
\end{lemma}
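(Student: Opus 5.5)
The plan is to use that $D_0^\ell$ is a subgroup of the multiplicative group ${\rm GF}(p)^*$, and that each set $xD_0^\ell$ with $x\neq 0$ is one of its cosets. The whole statement then reduces to the elementary fact that two cosets of a subgroup are either equal or disjoint. The hypotheses $p\geq 11$ and $n\ell\mid p-1$ play no real role beyond guaranteeing that $D_0^\ell$ is the subgroup of index $\ell$ generated by $g^\ell$. Throughout I read $x,y$ as nonzero, since $0\cdot D_0^\ell=\{0\}$ and the statement is about cyclotomic classes.

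First, $\langle g^\ell\rangle$ is a subgroup, so $D_0^\ell$ is closed under multiplication and inverses. Both directions then follow from a one-line computation.

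\emph{($\Rightarrow$, by contrapositive).} Suppose $xy^{-1}\in D_0^\ell$, say $xy^{-1}=h$ with $h\in D_0^\ell$. Then
\[
xD_0^\ell = yhD_0^\ell = yD_0^\ell ,
\]
since $hD_0^\ell=D_0^\ell$. This set is nonempty, as it contains $x$. So the intersection $xD_0^\ell\cap yD_0^\ell$ is nonempty, which proves this direction.

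\emph{($\Leftarrow$, by contrapositive).} Suppose there is some $z\in xD_0^\ell\cap yD_0^\ell$. Then $z=xh_1=yh_2$ with $h_1,h_2\in D_0^\ell$. Rearranging gives
\[
xy^{-1}=h_2h_1^{-1}\in D_0^\ell .
\]
So if $xy^{-1}\notin D_0^\ell$, the two sets must be disjoint.

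There is no genuine obstacle here. The only point needing care is the implicit requirement that $x,y\neq 0$, so that $y^{-1}$ exists and the sets $xD_0^\ell$, $yD_0^\ell$ really are cosets. I would state this explicitly at the start of the proof.
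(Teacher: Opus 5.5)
Your argument is correct and complete. Since $D_0^\ell=\langle g^\ell\rangle$ is a subgroup of ${\rm GF}(p)^*$, for nonzero $x,y$ the sets $xD_0^\ell$ and $yD_0^\ell$ are cosets, which coincide when $xy^{-1}\in D_0^\ell$ and are disjoint otherwise. The paper states this lemma without proof, relying implicitly on exactly this coset fact; its preceding sentence speaks of ``two non-zero elements'', which matches your caveat, and as you note, the hypotheses $p\geq 11$, $x\neq y$ and the factor $n$ play no role.
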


Given  ${\rm GF}(p)$, an integer $\ell$ dividing $p-1$,  and a primitive element $g$ we denote by $Log_{p,\ell}$ the logarithmic map modulo $\ell$:
\[
Log_{p,\ell}: \quad  g^\alpha  \in  \mathbb{Z}_p^\times  \mapsto \alpha \in \mathbb{Z}_\ell.
\] 
Note that $Log_{p,\ell}(D_0^\ell) = 0$. 

\begin{proposition}\label{prop:discr_log}
Let $p\geq 11$ be an odd prime and let $n \mid \frac{p-1}{2}$. Let $\ell=\frac{p-1}{2n}$ and let $g$ be a primitive element of $\mathbb{Z}_p$. Let $C$ be a codeword-set with weight $w$, autocorrelation $\lambda_a$, and list of differences $\Delta C = \pm S$. 


 If $ \{xy^{-1}\} \not\in D_0^\ell$ for every distinct $x,y \in S$ and there exists a solution $T$ to the $(\ell,\ Log_{p,\ell}(S))$-tiling problem, then there exists a $(p,w,\lambda_a,1)$-OOC with $n|T|$ codewords.
\end{proposition}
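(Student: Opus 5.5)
The plan is to build the code explicitly as $\mathcal{C}=\{g^{t}u\cdot C : t\in T,\ u\in U\}$, where $U$ is a set of coset representatives of a suitable subgroup. We then verify the autocorrelation and cross-correlation conditions in their difference form, that is, (\ref{AutoPropBis}) and (\ref{CrossPropBis}).

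\textbf{Autocorrelation.} Multiplying a set by a unit $\alpha\in\mathbb{Z}_p^\times$ multiplies its list of differences by $\alpha$, so $\Delta(\alpha C)=\alpha\Delta C$. Hence $\mu(\alpha C)=\mu(C)\le\lambda_a$ and $|\alpha C|=w$. So every candidate codeword already satisfies (\ref{AutoPropBis}), and only difference-disjointness (\ref{CrossPropBis}) remains to be checked.

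\textbf{Choice of multipliers.} Since $\ell=\frac{p-1}{2n}$, the subgroup $D_0^\ell$ has order $2n$ and contains $-1$, because $-1=g^{(p-1)/2}=g^{\ell n}$. Let $H\le D_0^\ell$ be a subgroup of order $n$ with $H\cdot\{\pm1\}=D_0^\ell$. We take $U$ to be coset representatives of $\{\pm1\}$ in $D_0^\ell$, for instance $U=\{g^{\ell j}: 0\le j<n\}$. Then $\pm U=D_0^\ell$, and $U$ has $n$ elements. (If $n$ is odd, $H=\langle g^{2\ell}\rangle$ works as well.) The codewords are $g^t u C$ for $t\in T$ and $u\in U$, which is $n|T|$ sets.

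\textbf{Cross-correlation.} The underlying set of $\Delta(g^tuC)$ is $\pm g^t u S$. Suppose two codewords share a difference. Then $\epsilon g^{t}u s=\epsilon' g^{t'}u' s'$ for some $s,s'\in S$ and signs $\epsilon,\epsilon'$. Taking $Log_{p,\ell}$, and using $u,u',\pm1\in D_0^\ell$, we get
\[
t+Log_{p,\ell}(s)=t'+Log_{p,\ell}(s') \quad\text{in } \mathbb{Z}_\ell .
\]

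First we need $Log_{p,\ell}$ to be injective on $S$, so that $|Log_{p,\ell}(S)|=|S|$. This is exactly what the hypothesis $xy^{-1}\notin D_0^\ell$ for distinct $x,y\in S$ gives, via Lemma \ref{lem:nec_cond}: the elements of $S$ lie in distinct cyclotomic classes.

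Since $T$ solves the tiling problem, $|Log_{p,\ell}(S)+T|=|S|\,|T|$, so all sums $z+t$ with $z\in Log_{p,\ell}(S)$ and $t\in T$ are distinct. The displayed equality therefore forces $t=t'$ and $s=s'$. We are left with $\epsilon u=\epsilon' u'$, that is, $u'u^{-1}\in\{\pm1\}$. Since $U$ is a transversal of $\{\pm1\}$, this gives $u=u'$, so the two codewords coincide. This proves (\ref{CrossPropBis}).

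The same argument also shows that the $n|T|$ sets are pairwise distinct, since $\Delta C\neq\emptyset$. Hence $\mathcal{C}$ is a $(p,w,\lambda_a,1)$-OOC of size $n|T|$.

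\textbf{Main obstacle.} The delicate step is the bookkeeping of the sign $\pm$ and of the coset $D_0^\ell$. One must confirm that $-1\in D_0^\ell$, which uses $2n\ell=p-1$, and that choosing only $n$ multipliers out of the $2n$ elements of $D_0^\ell$ is exactly what prevents $uC$ and $-uC$ from both appearing. Both would have the same difference set, so including both would violate disjointness.
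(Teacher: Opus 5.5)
Your proof is correct and follows the paper's argument: the paper likewise writes $D_0^\ell=\pm\{x_1,\dots,x_n\}$ and takes the codewords $x_iy_j\cdot C$, with $y_j$ representatives of $T$. As you do, it uses $Log_{p,\ell}(\pm x_i)=0$ together with the tiling property to force $a=b$ and $y_j=y_v$, and then $x_i^{-1}x_u=\pm1$. One harmless slip: a subgroup $H$ of order $n$ with $H\cdot\{\pm1\}=D_0^\ell$ exists only for odd $n$, since for even $n$ the unique subgroup of order $n$ of the cyclic group $D_0^\ell$ contains $-1$; delete that sentence, as your argument only uses the transversal $U=\{g^{\ell j}:0\le j<n\}$, which is valid.
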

\begin{proof}
Let $p,n$ and $\ell$ be as in the statement.
Since for every distinct $x,y \in S$, $ \{xy^{-1}\} \not\in D_0^\ell$  then,  from  Lemma \ref{lem:nec_cond},  $xD_0^\ell \cap  yD_0^\ell= \emptyset$ for every $x,y \in S$. Moreover, as $D_0^\ell$ has even order, $-1 \in D_0^\ell$,  hence we may write
 \[D_0^\ell = \pm \{x_1, x_2,\dotsc, x_n\}.
\]
Let now $g$ be a primitive element of ${\rm GF}(p)$, and let $T$ be a solution to the  $(\ell,\ Log_{p,\ell}(S))$-tiling problem; denote as $\{y_1,\dotsc, y_t\}$ a set of representatives for $(Log_{p,\ell})^{-1}(T)$. Construct the code $\mathcal{C}$ having as codeword-sets:
 \[
 (x_i  y_j) \cdot C, \qquad \text{for $i=1,\dotsc,n$ and $j=1,\dotsc, t$. }
 \]
Clearly, each of these codeword-sets has weight $w$ and autocorrelation $\lambda_a$, having as set of differences $\pm (x_i  y_j \cdot S)$. To show that the cross-correlation between two distinct codeword-sets is $1$, assume that two codeword-sets $C^1 = (x_i  y_j)\cdot C $ and   $C^2 = (x_u  y_v) \cdot C  $ of $\mathcal{C}$ share a common difference $a x_i  y_j = \pm bx_u  y_v $ for some $a,b\in S$. This implies that:
 \[
 Log_{p,\ell}(a x_i  y_j) =Log_{p,\ell}(a) + Log_{p,\ell}(y_j) =  Log_{p,\ell}(b) + Log_{p,\ell}(y_v) =  Log_{p,\ell}( \pm bx_u  y_v).
 \]
However, since $T$ is a solution to the   $(\ell,\ Log_{p,\ell}(S))$-tiling problem, we necessarily have $a=b$ and $y_j=y_v$, hence $ (x_i)^{-1}  x_u = \pm 1$ and $C^1=C^2$. 
\end{proof}
\begin{remark}\label{rem:coprimes}
Given any $a$ coprime with $\ell$, the existence of a solution to the $(\ell,S)$-tiling problem is equivalent to the existence of a solution to the $(\ell,a\cdot S)$-tiling problem. Hence previous construction does not depend on the choice of the primitive element.
\end{remark}

Let now $w=5$, $\lambda_a = 3$ and $|S|=5$. By writing $\ell=5t+r$ with
$0\leq r<5$, from Proposition \ref{prop:discr_log} we deduce that an optimal (resp. perfect) solution $T$ of the $(\ell,\ Log_{p,\ell}(S))$-tiling problem
yields an optimal (resp. perfect) $(p,5,3,1)$-OOC
whenever $nr<5$.
\begin{remark} \label{rem:cond_param_OOC}
Let $\ell = \frac{p-1}{2n}=5t+r$ for some $n\mid \frac{p-1}{2}$ and $0\leq r <5$. By examining the construction of Proposition \ref{prop:discr_log} in view of the upper bound of Theorem \ref{thm:bound}, we find the following conditions on the parameters for an optimal $(p,5,3,1)$-OOC:
\begin{center}
\begin{tabular}{c|l} 
$p \pmod{10}$ & $(n,r)$ \\ \hline
$1$ & $r=0$ and $n$ such that $10n\mid {p-1}$\\
$3$ & $(1,1)$ \\
$7$& $(1,3)$ or $(3,1)$ \\
$9$& $(1,4)$, $(2,2)$ or $(4,1)$ \\
\end{tabular}
\end{center}

\end{remark}

Note that if for a codeword-set $C$ its set of differences $\Delta C$ contains many elements in a geometric progression with common ratio $r$, the condition of Lemma \ref{lem:nec_cond} becomes quite easier to be verified. This is the case for the codeword-sets of  the following remark.
\begin{remark}\label{rem: multiset A}
In the following statements we will consider OOCs whose codeword-sets have two basic structures. In our constructions, for an odd prime $p \geq 11$ , a code $\mathcal{C}$ is formed by type $10$ codeword-sets  of the form:
\[
uC^1 = \{0,u,-u,3u,-3u\}\quad \text{ or}\quad uC^2=\{0,u,2u,-2u,3u\} \qquad u \in U \subset {\rm GF}(p);
\]
in the first case the list of differences  is  $\Delta(uC^1)= \pm\{u,2u,3u,4u,6u\}$, while in the second one  $\Delta(uC^2)= \pm\{u,2u,3u,4u,5u\}$. 

 If $p \equiv r \pmod{10}$, where $0\leq r < 10$, a  code $\mathcal{C}$ comprising $\lfloor\frac{p}{10}\rfloor$ codeword-sets of either type, say $uC^i$ for $u\in U \subset GF(p)$, is a $(p,5,3,1)$-OOC if $\bigcup_{u\in U} \Delta(uC^i)$ is a set containing $p-r$ distinct elements. Note that if $p \equiv 1 \pmod{10}$ this implies that $\bigcup_{u\in U} \Delta(uC^i)={\rm GF}(p)^*$.
\end{remark}

\begin{example}\label{ex:explicit_constr}
We construct an optimal  $(p,5,3,1)$-OOC for $p\in \{67,89,157\}$ based on the result of Proposition \ref{prop:discr_log}. In each case we have that the  codeword-set is of the form $uC$, with $C=\{0,1,-1,3,-3\}$, whose set of differences is $\pm \{1,2,3,4,6\}$.

We show in detail the construction for $p = 157$.  We choose $n=3$, hence $\ell=26$ and $D_0^{26} = \pm\{1,12,13\}$. Let $g=5$ be a primitive element of ${\rm GF}(157)^*$; we have $S = Log_{p,\ell}(\{1,2,3,4,6\}) = \{0,4,11,15,22\}$. It is an easy check to verify that $T=\{0,3,6,9,12\}$ is a solution to the $(26,S)$-tiling problem, as:
\[
S+T = \mathbb{Z}_{26}\setminus \{19\}.
\]
An optimal $(157,5,3,1)$-OOC is
\[
\{(\alpha \cdot \beta) C \mid \alpha \in \{1,12,13\}, \beta \in \{5^0,5^3,5^6,5^9,5^{12}\} \}.
\]

The interested reader may verify the cases $p \in \{67,89\}$ by following the table below; for completeness, we also reported the case $p=157$ shown above: 
\begin{center}
\begin{tabular}{ c|l|l|l}
$(p,n,\ell,g)$&$D_0^\ell$&$S$ &  $T$ \\ \hline
$(67,3,11,2)$&$\pm\{1,29,30\}$  &$\{0,1,2,6,7\}$& $\{0,3\}$ \\
$(89,2,22,7)$&$\pm\{1,34\}$&$\{0,3,4,7,8\}$& $\{0,2,11,13\}$ \\
$(157,3,26,5)$ & $\pm\{1,12,13\}$ & $\{0,4,11,15,22\}$ & $\{0,3,6,9,12\}$ \\
\end{tabular}
\end{center}
%
%

\end{example}


\begin{proposition}\label{perfect_nec_cond}
    Let $\mathcal{C}$ be a perfect $(p,5,3,1)$-OOC with $p = 10s+1$ prime and let $m$ denote the number of codeword-sets with differences $\pm \{u,2u,3u,4u,5u\}$. Then for every primitive element $g$:
    \[
    m Log_{p,5} \left(5 \cdot 6^{-1}\right) + s Log_{p,5}(144)\equiv 0 \pmod{5}.
    \]
\end{proposition}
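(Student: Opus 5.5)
Since $\mathcal{C}$ is perfect and $p=10s+1$, the code has $s$ codeword-sets. We have $p-1=10s$, so Theorem \ref{thm:bound} forces every codeword-set to have type exactly $10$: the total number of differences is $\sum\delta_i=p-1=10s$ with each $\delta_i\geq 7$, and the types $7,8,9$ are impossible here. Indeed, $p$ is prime and odd, while Lemmas \ref{lem:type7}, \ref{lem:type8} and \ref{lem:type9} require $8$, $9$, $10$ or $12$ to divide $v$. Since the differences of distinct codeword-sets are disjoint and together cover ${\rm GF}(p)^*$, the sets $\bar\Delta C_i$ partition ${\rm GF}(p)^*$.

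Next I apply Proposition \ref{prop:tipo10} with $v=p$ prime. Cases (d), (e), (f) cannot occur, since they need $3$, $11$ or $12$ to divide $p$, and for $p=11$ case (e) I would check separately or note that $p=11$ gives $s=1$. In $\mathbb{Z}_p$, "up to equivalence" means an affine image $mC+t$, and translation does not change $\Delta C$. So every codeword-set has
\[
\bar\Delta C_i = \pm u_i\{1,2,3,4,6\}\quad\text{(cases (a))},\qquad \bar\Delta C_i=\pm u_i\{1,2,3,4,5\}\quad\text{(cases (b),(c))},
\]
for some $u_i\in{\rm GF}(p)^*$. The parameter $m$ counts the codeword-sets of the second kind.

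The key step is to multiply together all nonzero elements of ${\rm GF}(p)$ in two ways, counting each $\pm$ pair once, i.e.\ working modulo $\pm1$. Since $-1$ is a fifth power (as $5\mid\frac{p-1}{2}$ and $-1=g^{(p-1)/2}$ with $(p-1)/2=5s$), signs are invisible to $Log_{p,5}$. For the same reason I can take a set $H$ of representatives of ${\rm GF}(p)^*/\{\pm1\}$ and work with $\sum_{h\in H}Log_{p,5}(h)$.

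On one side, the partition gives
\[
\sum_{h\in H} Log_{p,5}(h)\equiv \sum_{i=1}^{s}\bigl(5\,Log_{p,5}(u_i)+Log_{p,5}(\pi_i)\bigr)\equiv (s-m)Log_{p,5}(144)+m\,Log_{p,5}(120)\pmod 5,
\]
since $1\cdot2\cdot3\cdot4\cdot6=144$ and $1\cdot2\cdot3\cdot4\cdot5=120$; the factor $5$ kills the dependence on the $u_i$. This rearranges to $s\,Log(144)+m\,Log(120/144)=s\,Log(144)+m\,Log(5\cdot6^{-1})$.

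On the other side, the product of all elements of $H$ squares to $\pm\prod_{x\neq0}x=\pm(-1)=$ a fifth power, by Wilson. Equivalently, $\sum_{h\in H}Log(h)\equiv\frac12\sum_{j=0}^{p-2} j = \frac{(p-1)(p-2)}{4}\pmod 5$, taken with care since $2$ is invertible mod $5$. Because $(p-1)(p-2)/2=5s(p-2)\equiv0\pmod5$, the sum is $\equiv0$. Comparing the two computations gives the claim.

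The main obstacle is making the "modulo sign" bookkeeping rigorous, in particular justifying that choosing representatives $\pm h$ does not affect $Log_{p,5}$ precisely because $-1\in D_0^5$. A secondary obstacle is the clean exclusion of the non-(a),(b),(c) cases of Proposition \ref{prop:tipo10} for small $p$.
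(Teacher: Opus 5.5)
Your proof is correct and is essentially the paper's argument. Both compute the sum of $Log_{p,5}$ over $\mathrm{GF}(p)^*$ once as $0$ and once through the partition into the supports $\pm u\{1,2,3,4,6\}$ and $\pm u\{1,2,3,4,5\}$; the only cosmetic difference is that you sum over a half-system modulo $\pm 1$ (legitimate since $-1\in D_0^5$), whereas the paper sums over all of $\mathbb{Z}_p^*$ and cancels the factor $2$ at the end. Your preliminary step, showing via Lemmas \ref{lem:type7}--\ref{lem:type9} and Proposition \ref{prop:tipo10} that every codeword-set has type exactly $10$ with one of these two supports, fills in something the paper's proof takes for granted; the case $p=11$ is harmless because there both supports equal $\mathbb{Z}_{11}^*$.
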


\begin{proof}
    Let $\mathcal{C}$ be as in the statement; since the code is perfect, $\bar \Delta \mathcal{C}$ is $\mathbb{Z}_p^*$.  Consider the map $Log_{p,5}:   g^\alpha  \in  \mathbb{Z}_p^\times  \mapsto \alpha \in \mathbb{Z}_5$. From now on, we implicitly assume that the computations are performed in $\mathbb{Z}_5$.
It is easy to see that:
\[
\sum_{x \in \mathbb{Z}_p^*} Log_{p,5}(x) = \frac{p-1}{5}(0+1+2+3+4)  =0. \]
 Note that $Log_{p,5}(-1)=0$, hence $Log_{p,5}(-x)= Log_{p,5}(x) $. For a codeword-set $C$ with $\bar \Delta C=\pm \{u,2u,3u,4u,5u\}$, the sum of $Log_{p,5}$ of the elements in $\bar \Delta C$ is:
\[
\begin{aligned}
&2(Log_{p,5}(u) + Log_{p,5}(2u) +Log_{p,5}(3u) + Log_{p,5}(4u) +Log_{p,5}(5u)) \\ &= 10  Log_{p,5}(u) +2\left(3Log_{p,5}(2) + Log_{p,5}(3) + Log_{p,5}(5)  \right) \\
&=  2\left(3Log_{p,5}(2) + Log_{p,5}(3) + Log_{p,5}(5)  \right). 
\end{aligned}
\]
Similarly, for a codeword-set $C$ with $\bar \Delta C = \pm \{u,2u,3u,4u,6u\}$ the sum of $Log_{p,5}$ of the elements in $\bar \Delta C$ is:
\[
 4\left(2Log_{p,5}(2) + Log_{p,5}(3)\right). 
\]
Since $\mathcal{C}$ is a perfect code, $\sum_{C\in \mathcal{C}} \sum_{x \in \bar \Delta C}Log_{p,5}(x) =  \sum_{x \in \mathbb{Z}_p^*} Log_{p,5}(x) =0$, and $\mathcal{C}$ has $m$ and $s-m$ codeword-sets whose difference support are respectively  $\pm \{u,2u,3u,4u,5u\}$ and  $\pm \{u,2u,3u,4u,6u\}$:
\[
\begin{aligned}
\sum_{C\in \mathcal{C}} \sum_{x \in \bar \Delta C}Log_{p,5}(x) = 2m\left(3Log_{p,5}(2) + Log_{p,5}(3) + Log_{p,5}(5)  \right)  \\+ 2(s-m)\left(4Log_{p,5}(2) + 2 Log_{p,5}(3)\right)  = 0. 
\end{aligned}
\]
We then have 
\[
m(Log_{p,5}(5) - Log_{p,5}(2) - Log_{p,5}(3)) + s (4Log_{p,5}(2)+2Log_{p,5}(3) ) =0,
\]
which is equivalent to the statement. 
\end{proof}
Note that also here it does not depend on the chosen primitive element.

    Given a prime $p\equiv 1 \pmod{10}$, from Proposition \ref{perfect_nec_cond} we can directly determine modulo $5$ the number $m$ of codeword-sets with differences $\pm \{u,2u,3u,4u,5u\}$ from the values of $Log_{p,5}(5\cdot 6^{-1}) $ and $ Log_{p,5}(144)$. Moreover, the following corollary immediately follows:
    \begin{corollary}
        Let $p \geq 31$ be a prime with $5\cdot 6^{-1} \in D_0^5$. If $p\not \equiv 1 \pmod{50}$ and $144 \pmod{p}$ is not a fifth power, then there does not exist a perfect $(p,5,3,1)$-OOC.
    \end{corollary}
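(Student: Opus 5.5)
The plan is to feed a hypothetical perfect code into Proposition \ref{perfect_nec_cond} and show that the two hypotheses on $5\cdot 6^{-1}$ and $144$ force $p\equiv 1 \pmod{50}$, which is excluded. So suppose, by way of contradiction, that $\mathcal{C}$ is a perfect $(p,5,3,1)$-OOC with $p\geq 31$ prime.

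\textbf{Step 1: every codeword-set has type $10$.} By Lemmas \ref{lem:type7}, \ref{lem:type8} and \ref{lem:type9}, a codeword-set of type $7$, $8$ or $9$ exists only if $v$ is divisible by $8$, $9$, $10$ or $12$. A prime $p\geq 31$ is divisible by none of these, so every codeword-set has type exactly $10$.

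\textbf{Step 2: $p=10s+1$.} Since the code is perfect, $\bar\Delta\mathcal{C}=\mathbb{Z}_p^*$. The supports are pairwise disjoint, so $p-1=10s$, where $s$ is the size of $\mathcal{C}$.

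\textbf{Step 3: only two difference supports occur.} We apply Proposition \ref{prop:tipo10} with $v=p$. Cases (d), (e) and (f) need $3\mid p$, $11\mid p$ or $12\mid p$, so they are impossible. Each codeword-set is therefore equivalent to a set of form (a), (b) or (c), and in these cases $ord(x)=p\geq 31$ holds automatically.

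Equivalence $C'=m'C+t$ with $\gcd(m',p)=1$ multiplies the difference list by a unit. Hence the support of each codeword-set is
\[
\pm u\{1,2,3,4,6\} \quad \text{(case (a))} \qquad \text{or} \qquad \pm u\{1,2,3,4,5\} \quad \text{(cases (b) and (c))}
\]
for some unit $u$. Note that forms (b) and (c) give the same support. This is exactly the setting of Proposition \ref{perfect_nec_cond}, with $m$ counting the codeword-sets of forms (b) and (c).

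\textbf{Step 4: conclude.} Proposition \ref{perfect_nec_cond} gives
\[
m\,Log_{p,5}(5\cdot 6^{-1}) + s\,Log_{p,5}(144)\equiv 0 \pmod 5 .
\]
Since $5\cdot 6^{-1}\in D_0^5$, the first term vanishes, leaving $s\,Log_{p,5}(144)\equiv 0\pmod 5$. Since $144$ is not a fifth power, $Log_{p,5}(144)\not\equiv 0$, and it is invertible in $\mathbb{Z}_5$. Hence $5\mid s$, so $50\mid p-1$, that is $p\equiv 1\pmod{50}$. This contradicts the hypothesis.

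\textbf{Main obstacle.} The only delicate point is Step 3, the reduction to type-$10$ codeword-sets with one of the two supports required by Proposition \ref{perfect_nec_cond}. It needs the classification in Proposition \ref{prop:tipo10} together with the primality of $p$. In particular, one must note that case (c) has the same support as case (b), so that Proposition \ref{perfect_nec_cond} legitimately applies.
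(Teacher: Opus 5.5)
Your Step 4 is exactly the paper's argument. The corollary is read off from Proposition~\ref{perfect_nec_cond}: $Log_{p,5}(5\cdot 6^{-1})=0$ and $Log_{p,5}(144)$ is a unit in $\mathbb{Z}_5$, so $5\mid s$ and $p=10s+1\equiv 1\pmod{50}$. Your Step 3 is correct and usefully makes explicit what the paper leaves implicit: for prime $p\ge 31$ only forms (a), (b), (c) of Proposition~\ref{prop:tipo10} survive, and (b), (c) share the support $\pm u\{1,2,3,4,5\}$.

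There is, however, a non-sequitur in Step 1. Lemmas~\ref{lem:type7}, \ref{lem:type8} and \ref{lem:type9} only exclude types $7,8,9$, so they give $t(C)\ge 10$, not $t(C)=10$. The type of a $5$-subset can be as large as $20$. For instance, $\{0,1,3,7,12\}$ has twenty distinct differences in $\mathbb{Z}_p$ for every prime $p\ge 31$, and it is a legitimate codeword-set since $\mu=1\le 3$. Because your Step 2 derives $p-1=10s$ from Step 1, it is unsupported as well. Perfection alone does not give $p=10s+1$: for $p\equiv 3\pmod{10}$ the counting allows one codeword-set of type $12$.

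The repair is a counting argument run in the opposite order.
\begin{enumerate}
\item The hypotheses force $5\mid p-1$. The notation $D_0^5$ presupposes it, and in any case if $5\nmid p-1$ then $x\mapsto x^5$ is a bijection of $\mathrm{GF}(p)^*$, so $144$ would be a fifth power. Since $p$ is odd, $p\equiv 1\pmod{10}$.
\item A perfect code is optimal, and a prime $p\ge 31$ lies in none of the exceptional classes of Theorem~\ref{thm:bound}. Hence $s=\lfloor p/10\rfloor=(p-1)/10$.
\item Finally, $\bar\Delta\mathcal{C}=\mathbb{Z}_p^*$ with pairwise disjoint supports gives $\sum_i t(C_i)=p-1=10s$. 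Together with $t(C_i)\ge 10$, this forces every type to be exactly $10$.
\end{enumerate}
With this in place, Steps 3 and 4 go through as you wrote them.
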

From this corollary, we have that there does not exist a perfect $(p,5,3,1)$-OOC for
\[
p \in \{31, \ 61, \   191, \   311,\    491,\    821,\    881,\ 1021, \   1381,\   1811,\dotsc\}
\]

In what follows, we present two cyclotomic constructions of perfect $(v,5,3,1)$-OOCs, showing the  cyclotomic conditions that must be met to obtain examples of perfect $(v,5,3,1)$-OOCs using these constructions. \\

We begin by considering perfect $(p,5,3,1)$-OOCs consisting of codeword-sets of the form $C = \{0,u,-u,3u,-3u\}$, where $u \in {\rm GF}(p)$.

\begin{lemma}\label{lemma: generalised coset reps cycle1}
    Let $p \equiv 1 \pmod{10}$ be a prime. Let $5^r$ be the largest $5^{th}$ power dividing $p-1$, and let $i \in \{1,2,\ldots,r\}$. Then $\{1,2,3,4,6\}$ is a complete system of coset representatives for $D_0^{5^i}$ 
    in $D_0^{5^{i-1}}$ 
    if and only if $2 \in D_{(5^{i-1})j}^{5^i}$ for $j \neq 0$ and $12 \in D_0^{5^i}$. 
\end{lemma}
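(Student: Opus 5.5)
The plan is to reduce everything to the cyclic quotient group $Q=D_0^{5^{i-1}}/D_0^{5^i}$. Since $5^i\mid p-1$, this quotient is cyclic of order $5$. Writing $D_0^{5^{i-1}}=\langle g^{5^{i-1}}\rangle$, the cosets of $D_0^{5^i}$ inside it are exactly $D_{5^{i-1}j}^{5^i}$ for $j=0,1,2,3,4$. So the statement "$\{1,2,3,4,6\}$ is a complete system of coset representatives" means two things: each of $1,2,3,4,6$ lies in $D_0^{5^{i-1}}$, and their images in $Q$ are pairwise distinct, hence exhaust $Q$. Throughout, it is convenient to use the logarithm $Log_{p,5^i}$ restricted to $D_0^{5^{i-1}}$. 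This identifies $Q$ with the subgroup $5^{i-1}\mathbb{Z}_{5^i}\cong\mathbb{Z}_5$, with $\phi(x)=Log_{p,5^i}(x)/5^{i-1}$.

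First I will handle membership. Suppose $2\in D_0^{5^{i-1}}$ and $12\in D_0^{5^{i-1}}$; the latter holds since $D_0^{5^i}\subseteq D_0^{5^{i-1}}$. Then $3=12\cdot 2^{-2}$, $4=2^2$ and $6=12\cdot 2^{-1}$ all lie in the subgroup $D_0^{5^{i-1}}$. Conversely, if the five elements are coset representatives, then in particular $2,3\in D_0^{5^{i-1}}$. Hence membership of all of $\{1,2,3,4,6\}$ is equivalent to $2,3\in D_0^{5^{i-1}}$. The hypothesis "$2\in D_{5^{i-1}j}^{5^i}$" already encodes $2\in D_0^{5^{i-1}}$, and $12\in D_0^{5^i}$ then forces $3\in D_0^{5^{i-1}}$.

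Next, set $a=\phi(2)$ and $b=\phi(3)$ in $\mathbb{Z}_5$. The images of $1,2,3,4,6$ are $0,a,b,2a,a+b$. I will check by elementary case analysis in $\mathbb{Z}_5$ that these are pairwise distinct if and only if $a\neq 0$ and $b=-2a$, i.e.\ $\phi(12)=2a+b=0$.

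For sufficiency, if $b=-2a$ with $a\neq0$, the list becomes $0,a,-2a,2a,-a$. Since $a\neq 0$ and $5$ is prime, these are the five distinct elements $0,\pm a,\pm 2a$.

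For necessity, distinctness forces $a\neq 0$, since otherwise $\phi(1)=\phi(2)$. Scaling by $a^{-1}$, we may take $a=1$. Then $b\notin\{0,1,2\}$ (to avoid collisions with $1,2,4$), and $a+b=1+b\notin\{0,1,2,b\}$, which excludes $b\in\{4,0,1\}$. The only value left is $b=3=-2a$, as claimed.

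Finally, $a\neq0$ is exactly $2\in D_{5^{i-1}j}^{5^i}$ with $j\neq0$, and $\phi(12)=0$ is exactly $12\in D_0^{5^i}$, which gives the lemma. The main obstacle is purely bookkeeping. The condition $12\in D_0^{5^i}$ must be interpreted correctly, together with the implicit requirement that $2$ and $3$ lie in $D_0^{5^{i-1}}$, so that the reduction to $\mathbb{Z}_5$ is legitimate. Once that is set up, the $\mathbb{Z}_5$ case check is routine.
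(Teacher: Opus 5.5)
Your proof is correct and takes essentially the same route as the paper's: both map into $\mathbb{Z}_5$ with $\phi(4)=2\phi(2)$ and $\phi(6)=\phi(2)+\phi(3)$, and show that distinctness is equivalent to $\phi(2)\neq 0$ together with $\phi(12)=2\phi(2)+\phi(3)=0$. The only differences are that your normalization $a=1$ replaces the paper's explicit list of the four admissible index tuples, and that you state explicitly that $3\in D_0^{5^{i-1}}$ (via $3=12\cdot 2^{-2}$), which the paper uses without comment.
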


\begin{proof}
     In ${\rm GF}(p)^*$, consider the cyclotomic classes $D_0^{5^i}$ and $D_0^{5^{i-1}}$; note that $D_0^{5^i}$ is a subgroup $D_0^{5^{i-1}}$ of index $5$. Let $x \in D_{5^{i-1}i_x}^{5^i}$, where $x \in \{1,2,3,4,6\}$.   Clearly when $x = 1$, $D_{5^{i-1}i_1}^{5^i} = D_0^{5^i}$. \\ 
    For the forward implication, assume that $\{1,2,3,4,6\}$ is a complete system of coset representatives of $D_0^{5^i}$ in $D_0^{5^{i-1}}$. Then $2 \in D_{5^{i-1}i_2}^{5^{i}}$ and $3 \in D_{5^{i-1}i_3}^{5^{i}}$, where $1 \leq i_2 \neq i_3 \leq 4$. It then follows that $4 \in D_{5^{i-1}i_4}^{5^{i}} = D_{2(5^{i-1})i_2}^{5^i}$ and $6 \in D_{5^{i-1}i_6}^{5^i}$, where $i_6 = i_2 + i_3$. It can be checked that the only possibilities for $(i_1,i_2,i_3,i_4,i_6)$ are $(0,1,3,2,4)$, $(0,2,1,4,3)$, $(0,3,4,1,2)$ and $(0,4,2,3,1)$. In all cases, $2 \in D_{(5^{i-1})j}^{5^i}$ for $j \neq 0$ and $i_2 + i_6 = i_3 + i_4 = 0$, which implies $12 \in D_0^{5^i}$.  \\
    For the reverse direction, $2 \in D_{5^{i-1}i_2}^{5^{i}}$, where $1 \leq i_2 \leq 4$, implies that $i_4 = 2i_2$. As $12 \in D_{5^{i-1}(i_2+i_6)}^{5^i} = D_{5^{i-1}(i_3+i_4)}^{5^i} = D_0^{5^i}$, we can write $i_2 + i_6 = i_3 + i_4 \equiv 0 \pmod 5$. From $i_2 + i_6 \equiv 0 \pmod 5$, we obtain $i_6 = -i_2 = 4i_2$. Moreover, using the identity $i_4 = 2i_2$, we may rewrite the equation $i_2 + i_6 = i_3 + i_4$ as $i_6 = i_3 + i_2$, thus obtaining $i_3 = 3i_2$ using the identity $i_6 = 4i_2$. We can then see that $(i_1,i_2,i_3,i_4,i_5)$ is a complete set of coset representations for $D_0^{5^i}$ in $D_0^{5^{i-1}}$, completely determined by the value of $i_2$. In fact, the  possible options for $(i_1,i_2,i_3,i_4,i_5)$ are $(0,1,3,2,4)$, $(0,2,1,4,3)$, $(0,3,4,1,2)$, $(0,4,2,3,1)$.    
\end{proof}

By combining Remark \ref{rem: multiset A} and Lemma \ref{lemma: generalised coset reps cycle1}, we get the following family of perfect $(p,5,3,1)$-OOCs. 

\begin{theorem}\label{theorem: generalised cycle conditions}
    Let $p \equiv 1 \pmod{10}$ be a prime and let $5^r$ be the largest power of $5$ which divides $p-1$. If $2 \in D_{(5^{i-1})j}^{5^i}$ for $j \neq 0$ and $12 \in D_0^{5^i}$ for some $i$ in $\{1,2,\ldots,r\}$, then there exists a perfect $(p,5,3,1)$-OOC.
\end{theorem}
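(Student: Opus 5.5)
By Lemma \ref{lemma: generalised coset reps cycle1}, the hypotheses on $2$ and $12$ say exactly that $\{1,2,3,4,6\}$ is a complete system of coset representatives for $D_0^{5^i}$ in $D_0^{5^{i-1}}$. So I will build the code from multiplicative translates of $C^1=\{0,1,-1,3,-3\}$, whose list of differences is $\pm\{1,2,3,4,6\}$, and choose the multipliers so that the difference lists tile ${\rm GF}(p)^*$. By Remark \ref{rem: multiset A}, it then suffices to exhibit a set $U$ of size $\frac{p-1}{10}$ with $\bigcup_{u\in U}\Delta(uC^1)={\rm GF}(p)^*$, each element appearing exactly once.

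First I will check that every $uC^1$ is a legitimate codeword-set of type $10$. By Proposition \ref{prop:tipo10}(a), this requires $ord(u)\ge 11$ and $ord(u)\neq 12$ in $\mathbb{Z}_p$. Since $p$ is prime and $p\ge 11$ (as $p\equiv 1\pmod{10}$), every nonzero $u$ has additive order $p\geq 11$, and $p\ne12$, so this holds. Alternatively, the distinctness of $1,2,3,4,6$ modulo $D_0^{5^i}$ forces these five elements to be distinct and nonzero, and also to avoid $\pm$ collisions, since $-1\in D_0^{5^i}$.

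Next comes the tiling. Let $H=D_0^{5^i}$, which has even order $\frac{p-1}{5^i}$, because $p-1$ is even and $5^i$ is odd. Hence $-1\in H$, and I can write $H=\pm\{h_1,\dots,h_{m}\}$ with $m=\frac{p-1}{2\cdot 5^i}$. Let $R=\{g^0,g^1,\dots,g^{5^{i-1}-1}\}$ be coset representatives of $D_0^{5^{i-1}}$ in ${\rm GF}(p)^*$. I take $U=\{h_a g^b : 1\le a\le m,\ 0\le b<5^{i-1}\}$, so $|U|=m5^{i-1}=\frac{p-1}{10}$. For $u=h_ag^b$, the list $\Delta(uC^1)=\pm h_ag^b\{1,2,3,4,6\}$ contains one element $\pm h_a g^b x$ from each coset $g^b xH$, for $x\in\{1,2,3,4,6\}$. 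The cosets $g^bxH$, with $x$ in this set and $0\le b<5^{i-1}$, are pairwise distinct and cover ${\rm GF}(p)^*$. Distinctness within each $b$ is the coset representative property; across different $b$, the cosets lie in different cosets of $D_0^{5^{i-1}}$, because $x\in D_0^{5^{i-1}}$. Within a fixed coset $g^bxH$, the elements $\pm h_ag^bx$ for $a=1,\dots,m$ run exactly once through $g^bxH$. So the union of all difference lists is ${\rm GF}(p)^*$, with no repetition.

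Finally, this gives a difference-disjoint family of $\frac{p-1}{10}=\lfloor p/10\rfloor$ codeword-sets, each with $\mu\le 3$. This meets the bound of Theorem \ref{thm:bound}, since $p\equiv1\pmod{10}$ falls in the ``otherwise'' case, and every nonzero difference is covered, so the code is perfect. The only delicate step is the coset-counting bookkeeping in the tiling. It is essentially the case $T$ equal to the full set $\{0,5,\dots\}$ of Proposition \ref{prop:discr_log}, with $\ell=5^i$; I could instead invoke that proposition with $n=m$ and $T=\{0\}$ when $i=1$, and with $T=5\mathbb{Z}_{5^i}$ in general.
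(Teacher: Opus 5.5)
Your proof is correct and is essentially the paper's argument: $U$ is the product of a transversal of $\{\pm1\}$ in $D_0^{5^i}$ with a transversal of $D_0^{5^{i-1}}$ in ${\rm GF}(p)^*$, and your coset count supplies the verification that the paper leaves to the reader. The one slip is in your closing aside: for $i\ge 2$ you have $Log_{p,5^i}(\{1,2,3,4,6\})=5^{i-1}\mathbb{Z}_{5^i}\subseteq 5\mathbb{Z}_{5^i}$, so $T=5\mathbb{Z}_{5^i}$ does not tile, and the correct tiling set is $T=\{0,1,\dots,5^{i-1}-1\}$, the logarithms of your own $R$.
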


\begin{proof}
    Let $i \in \{1,2,\ldots,r\}$ and suppose that $\{1,2,3,4,6\}$ is a complete system of representatives for $D_0^{5^i}$ in $D_0^{5^{i-1}}$. Then it follows that $\{1,2,3,4,6\}*D_0^{5^i}  = D_0^{5^{i-1}}$. As $5^i$ is odd, it follows that $\{1,-1\}$ is a subgroup of $D_0^{5^i}$. Let $S$ be a complete system of representatives for the cosets of $\{1,-1\} \in D_0^{5^i}$, that is $\{1,-1\}*S= D_0^{5^i}$. Finally, let $T$ be a complete system of representatives for the cosets of $D_0^{5^{i-1}} \in {\rm GF}(p)^*$, that is $D_0^{5^{i-1}}*T = {\rm GF}(p)^*$. Set $C=\{0,3,-3,1,-1\}$ and $U = S*T$. Then $\{C*u \,|\, u \in U\}$ is a perfect $(p,5,3,1)$-OOC.
\end{proof}

The following example is the smallest non-trivial example of the construction outlined in Theorem \ref{theorem: generalised cycle conditions}.

\begin{example}
    In ${\rm GF}(211)$, when $2$ is used to multiplicatively generate ${\rm GF}(211)^*$, there is a perfect $(211,5,3,1)$-OOC consisting of $\lfloor\frac{211}{10}\rfloor = 21$ blocks of the form $\{0,u,-u,3u,-3u\}$, where $u \in S = \{1,2^{5},\ldots,2^{95}\}$. By Theorem \ref{theorem: generalised cycle conditions}, this follows from the fact that when $2$ is the multiplicative generator of this field then $1 \in D_0^5$, $2 \in D_1^5$, $4 \in D_2^5$, $3 \in D_3^5$ and $6 \in D_4^5$.
\end{example}

In the special case where $i = 1$, using Theorem \ref{theorem: generalised cycle conditions}, we can generate the following $(p,5,3,1)$-OOC in finite fields of order less than $1000$. 

\begin{center}
    \begin{tabular}{c|c}
        $p$ & Codeword-sets \\
        \hline 
        $11$ & $\{0,1,3,8,10\}$ \\ 
        $211$ & $\{0,2^{5d},3(2^{5d}),208(2^{5d}),210(2^{5d})\}\ \textrm{for}\  0 \leq d \leq 20$ \\
        $271$ & $\{0,6^{5d},3(6^{5d}),268(6^{5d}),270(6^{5d})\}\ \textrm{for}\ 0 \leq d \leq 26$ \\
        $811$ & $\{0,3^{5d},3(3^{5d}),808(3^{5d}),810(3^{5d})\}\ \textrm{for}\ 0 \leq d \leq 80$ \\
        $941$ & $\{0,2^{5d},3(2^{5d}),938(2^{5d}),940(2^{5d})\}\  \textrm{for}\ 0 \leq d \leq 93$ \\
    \end{tabular}
\end{center}

For $2\leq i \leq 5$, the smallest $p$ for which a $(p,5,3,1)$-OOC  can be constructed in each case using Theorem \ref{theorem: generalised cycle conditions} is listed in the table below:

\begin{center}
    \begin{tabular}{c|c}
        $p$  & $i$ \\
        \hline 
        $77101$ & $2$ \\ 
        $1832251$ & $3$ \\
        $1948481251$ & $4$ \\
        $159246187501$ & $5$
    \end{tabular}
\end{center}

We now repeat the previous process to identify the cyclotomic conditions that need to be met in order to construct  perfect $(p,5,3,1)$-OOCs consisting of codeword-sets of the form $C = \{0,u,2u,-2u,3u\}$, where $u \in {\rm GF}(p)$.  
\begin{lemma}\label{lemma: generalised coset reps cycle}
    Let $p \equiv 1 \pmod{10}$ be a prime. Let $5^r$ be the largest $5^{th}$ power dividing $p-1$, and let $i \in \{1,2,\ldots,r\}$. Then $\{1,2,3,4,5\}$ is a complete system of coset representatives for $D_0^{5^i}$ 
    in $D_0^{5^{i-1}}$ 
    if and only if $2 \in D_{(5^{i-1})j}^{5^i}$ for $j \neq 0$ and either: 
    \begin{itemize}
        \item [(a)] $6,20 \in D_0^{5^i}$ or
        \item [(b)] $10,12 \in D_0^{5^i}$.
    \end{itemize} 
\end{lemma}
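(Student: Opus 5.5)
The plan is to mirror the proof of Lemma \ref{lemma: generalised coset reps cycle1}: translate everything into exponents in the cyclic quotient $D_0^{5^{i-1}}/D_0^{5^i}\cong\mathbb{Z}_5$. First note that $2,3,5\in D_0^{5^{i-1}}$ is implicit in either direction, since we require all of $1,2,3,4,5$ to lie in $D_0^{5^{i-1}}$. For $x\in\{2,3,5\}$ write $x\in D^{5^i}_{5^{i-1}i_x}$ with $i_x\in\mathbb{Z}_5$. The class map is a homomorphism, so $i_1=0$ and $i_4=2i_2$. The statement then becomes: $\{0,i_2,i_3,2i_2,i_5\}=\mathbb{Z}_5$ if and only if $i_2\neq 0$ and either (a) $i_2+i_3=0$ and $2i_2+i_5=0$ (that is, $6,20\in D_0^{5^i}$), or (b) $i_2+i_5=0$ and $2i_2+i_3=0$ (that is, $10,12\in D_0^{5^i}$).

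Next I would normalise. If $i_2\neq0$, replace each exponent by its product with $i_2^{-1}$, which is harmless because multiplication by a unit permutes $\mathbb{Z}_5$. Then $i_2=1$, $i_4=2$, and the condition is $\{i_3,i_5\}=\{3,4\}$. That leaves exactly two possibilities. In the first, $i_3=4,\ i_5=3$, which gives $i_2+i_3=0$ and $i_4+i_5=0$, hence $6=2\cdot3$ and $20=4\cdot5$ lie in $D_0^{5^i}$; this is case (a). In the second, $i_3=3,\ i_5=4$, which gives $i_2+i_5=0$ and $i_4+i_3=0$, hence $10$ and $12$ lie in $D_0^{5^i}$; this is case (b).

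For the forward direction, $2$ must represent a nontrivial coset, so $i_2\neq0$, i.e. $2\in D^{5^i}_{5^{i-1}j}$ with $j\neq0$. The normalisation above then yields (a) or (b). For the converse, from $i_2\neq0$ and (a) we get $i_3=-i_2$ and $i_5=-2i_2$. This makes $(i_1,\dots,i_5)=(0,i_2,-i_2,2i_2,-2i_2)$, which is all of $\mathbb{Z}_5$. From (b) we get $i_5=-i_2$ and $i_3=-2i_2$, with the same conclusion. In the language of the earlier lemma, I would also list the four explicit tuples in each case, $(0,1,4,2,3)$, $(0,2,3,4,1)$, etc.

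The only delicate point is the justification that $2,3,5\in D_0^{5^{i-1}}$ in the converse direction. I expect this to be the main obstacle, since it is not among the stated hypotheses. I would handle it exactly as in Lemma \ref{lemma: generalised coset reps cycle1}, by reading the hypothesis ``$2\in D^{5^i}_{(5^{i-1})j}$'' as already placing $2$ in $D_0^{5^{i-1}}$. Then, in case (a), $6\in D_0^{5^i}$ forces $3\in D_0^{5^{i-1}}$, and $20\in D_0^{5^i}$ forces $5\in D_0^{5^{i-1}}$. Case (b) works similarly via $10$ and $12$.
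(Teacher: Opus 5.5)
Your proposal is correct and takes the same route as the paper. You pass to exponents in $D_0^{5^{i-1}}/D_0^{5^i}\cong\mathbb{Z}_5$, use $i_4=2i_2$, and read (a) and (b) as the linear conditions $i_2+i_3=2i_2+i_5=0$ and $i_2+i_5=2i_2+i_3=0$; normalising to $i_2=1$ is a tidier replacement for the paper's enumeration of the eight admissible tuples. The point you flag as delicate is no real obstacle, since $D^{5^i}_{5^{i-1}j}\subseteq D_0^{5^{i-1}}$ by definition, and your deduction of $3,5\in D_0^{5^{i-1}}$ from $6,20$ (or from $10,12$) correctly fills a step the paper leaves implicit.
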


\begin{proof}
     In ${\rm GF}(p)^*$, consider the cyclotomic classes $D_0^{5^i}$ and $D_0^{5^{i-1}}$; note that $D_0^{5^i}$ is a subgroup $D_0^{5^{i-1}}$ of index $5$. Let $x \in D_{5^{i-1}i_x}^{5^i}$, where $x \in \{1,2,3,4,5\}$.   Clearly when $x = 1$, $D_{5^{i-1}i_1}^{5^i} = D_0^{5^i}$. \\ 
    The proof of the forward implication is analogous to the proof of the forward implication in Lemma \ref{lemma: generalised coset reps cycle1}. It can be checked that the only possibilities for $(i_1,i_2,i_3,i_4,i_5)$ are: $(0,1,3,2,4)$, $(0,1,4,2,3)$, $(0,2,1,4,3)$, $(0,2,3,4,1)$, $(0,3,2,1,4)$, $(0,3,4,1,2)$, $(0,4,1,3,2)$ and $(0,4,2,3,1)$. In all cases, $2 \in D_0^{5^{i-1}}\backslash{D_0^{5^i}}$ and, further, we can deduce that either $6,20 \in D_0^{5^i}$ or $10,12 \in D_0^{5^i}$. \\
    For the reverse direction, we prove case $(a)$; case $(b)$ follows analogously. As $2 \in D_{5^{i-1}i_2}^{5^{i}}$, where $1 \leq i_2 \leq 4$, this implies that $i_4 = 2i_2 \neq 0$. As $6 \in D_{5^{i-1}(i_2+i_3)}^{5^i} = D_0^{5^i}$, we can write $i_2 + i_3 \equiv 0 \pmod 5$, and therefore $i_3 = -i_2 = 4i_2$. Moreover, as $20 \in D_{5^{i-1}(i_4+i_5)}^{5^i} = D_0^{5^i}$, it follows that $i_5 = -2i_2 = 3i_2$. We can therefore see that $(i_1,i_2,i_3,i_4,i_5)$ is a complete set of coset representations for $D_0^{5^i}$ in $D_0^{5^{i-1}}$, completely determined by the value of $i_2$. In fact, the possible options for $(i_1,i_2,i_3,i_4,i_5)$ are $(0,1,3,2,4)$, $(0,2,1,4,3)$, $(0,3,4,1,2)$, $(0,4,2,3,1)$.    
\end{proof}

We can then use cyclotomic conditions to establish the following family of perfect $(p,5,3,1)$-OOCs. The proof of this result is analogous to the proof of Theorem \ref{theorem: generalised cycle conditions}.

\begin{theorem}\label{theorem: generalised bull conditions}
    Let $p \equiv 1 \pmod{10}$ be a prime and let $5^r$ be the largest power of $5$ which divides $p-1$. If $2 \in D_0^{5^{i-1}}\backslash D_0^{5^i}$ and either $6,20 \in D_0^{5^i}$ or $10,12 \in D_0^{5i}$ for some $i$ in $\{1,2,\ldots,r\}$, then there exists a perfect $(p,5,3,1)$-OOC.
\end{theorem}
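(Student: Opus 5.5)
The plan mirrors the proof of Theorem \ref{theorem: generalised cycle conditions}, with $C=\{0,1,2,-2,3\}$ in place of $\{0,1,-1,3,-3\}$. Fix $i\in\{1,\ldots,r\}$ satisfying the hypotheses. The condition $2\in D_0^{5^{i-1}}\setminus D_0^{5^i}$ says exactly that $2\in D_{(5^{i-1})j}^{5^i}$ for some $j\neq 0$. Together with $6,20\in D_0^{5^i}$ (respectively $10,12\in D_0^{5^i}$), Lemma \ref{lemma: generalised coset reps cycle} then says that $\{1,2,3,4,5\}$ is a complete system of coset representatives for $D_0^{5^i}$ in $D_0^{5^{i-1}}$. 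Hence $\{1,2,3,4,5\}\cdot D_0^{5^i}=D_0^{5^{i-1}}$, with each element of $D_0^{5^{i-1}}$ written uniquely as a product $xd$ with $x\in\{1,2,3,4,5\}$ and $d\in D_0^{5^i}$.

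The construction proceeds as follows.

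\begin{enumerate}
\item Since $|D_0^{5^i}|=(p-1)/5^i$ is even, $-1\in D_0^{5^i}$. Choose $S$ with $\{1,-1\}\cdot S=D_0^{5^i}$, and choose $T$ a system of representatives of the cosets of $D_0^{5^{i-1}}$ in ${\rm GF}(p)^*$.
\item Put $U=S\cdot T$ and take the code $\{uC : u\in U\}$.
\end{enumerate}

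For each $u$, $\Delta(uC)=\pm\{u,2u,3u,4u,5u\}$ by Remark \ref{rem: multiset A}.

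It remains to check that these lists are pairwise disjoint and together cover ${\rm GF}(p)^*$. Any nonzero $y$ lies in a unique coset $tD_0^{5^{i-1}}$. Then $y/t=xd$ uniquely, and $d=\pm s$ uniquely. So $y=\pm x(st)$, which determines $u=st\in U$, the multiplier $x$ and the sign uniquely. Hence the lists $\Delta(uC)$, $u\in U$, partition ${\rm GF}(p)^*$. Each list consists of $10$ distinct elements, so every $uC$ has type $10$; indeed $ord(u)=p\ge 11$, so $uC$ is of the form in Proposition \ref{prop:tipo10}(b). Thus $\mu(uC)\le 3$, and difference-disjointness gives cross-correlation $1$.

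Finally, $|U|=\frac{(p-1)/5^i}{2}\cdot 5^{i-1}=\frac{p-1}{10}=\lfloor p/10\rfloor$. This matches the bound of Theorem \ref{thm:bound}, since $p\equiv1\pmod{10}$ falls in the floor case. The only missing difference is $0$, so the code is perfect.

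The main obstacle is only the bookkeeping in the uniqueness of the factorisation: one needs $-1\in D_0^{5^i}$, so that the sign is absorbed into $S$ and not into the representatives $\{1,\ldots,5\}$. One also needs to note that the hypothesis $10,12\in D_0^{5i}$ should read $D_0^{5^i}$ (a typo), which is the form in which Lemma \ref{lemma: generalised coset reps cycle} is invoked.
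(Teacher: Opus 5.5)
Your proposal is correct and is essentially the paper's argument. The paper only says the proof is analogous to that of Theorem~\ref{theorem: generalised cycle conditions}; that means invoking Lemma~\ref{lemma: generalised coset reps cycle} to get $\{1,2,3,4,5\}\cdot D_0^{5^i}=D_0^{5^{i-1}}$, then taking $U=S\cdot T$ with $\{\pm1\}\cdot S=D_0^{5^i}$ and $T$ a transversal of $D_0^{5^{i-1}}$, applied to $C=\{0,1,2,-2,3\}$. Your unique-factorisation check, the count $|U|=\frac{p-1}{10}$, the appeal to Proposition~\ref{prop:tipo10}(b) for $\mu(uC)\le 3$, and your reading of $D_0^{5i}$ as the typo for $D_0^{5^i}$ are all correct.
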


In finite fields of order less than $1000$, we can obtain the following $(p,5,3,1)$-OOCs using Theorem \ref{theorem: generalised bull conditions}. Note that $i=1$ in all of these cases.  

\begin{center}
    \begin{tabular}{c|c}
        $p$ & Codeword-sets \\
        \hline 
        $11$ & $\{0,1,2,3,9\}$ \\ 
        $421$ & $\{0,2^{5d},2(2^{5d}),3(2^{5d}),419(2^{5d})\}\ \textrm{for}\ 0 \leq d \leq 42$ \\
        $701$ & $\{0,2^{5d},2(2^{5d}),3(2^{5d}),699(2^{5d})\}\ \textrm{for}\ 0 \leq d \leq 70$ \\
    \end{tabular}
\end{center}

For $i =2,3$, the smallest $p$ for which a $(p,5,3,1)$-OOC can be generated in each case is given in the table below.

\begin{center}
    \begin{tabular}{c|c}
       $p$ & $i$ \\
        \hline 
        $3878701$ & $2$ \\ 
        $1550857001$ & $3$ \\
    \end{tabular}
\end{center}

\section{A recursive construction}\label{sec:recursive}
In this section we propose a general recursive construction for optimal or perfect $(v,5,3,1)$-OOCs. 

\begin{proposition}\label{prop:recursive}
    If there exists a perfect $(v,5,3,1)$-OOC with all the blocks 
    of the form $\{0,x,-x,3x,-3x\}$ and/or $\{0,x,2x,-2x,3x\}$ and/or $\{0,x,2x,4x,5x\}$ 
    and a perfect (respectively, optimal) $(w,5,3,1)$-OOC with $\gcd(w,30)=1$, then there exists a perfect (respectively, optimal) $(vw,5,3,1)$-OOC.
    \end{proposition}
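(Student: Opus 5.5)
We use the standard recursive construction for difference families, lifting both codes into $\mathbb{Z}_{vw}$. Let $\mathcal{A}$ be the perfect $(v,5,3,1)$-OOC, whose blocks are $A=\{0,x,\pm\ldots\}$, each of the form $x\cdot B_j$ with $B_1=\{0,1,-1,3,-3\}$, $B_2=\{0,1,2,-2,3\}$, $B_3=\{0,1,2,4,5\}$. Let $\mathcal{B}$ be the perfect (resp. optimal) $(w,5,3,1)$-OOC. The new code $\mathcal{C}$ in $\mathbb{Z}_{vw}$ will consist of two families.

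The first family is $w\cdot\mathcal{B}'$, where for each block $\{b_1,\dots,b_5\}$ of $\mathcal{B}$ we take $\{wb_1,\dots,wb_5\}$ in $\mathbb{Z}_{vw}$. This is wrong: elements of $w\mathbb{Z}_{vw}$ form a copy of $\mathbb{Z}_v$. The correct choice is $\{vb_1,\dots,vb_5\}\subset v\mathbb{Z}_{vw}\cong\mathbb{Z}_w$. Its differences cover $v\cdot(\bar\Delta\mathcal{B})$, hence all of $v\mathbb{Z}_{vw}\setminus\{0\}$ when $\mathcal{B}$ is perfect, or all but $r$ of them when $\mathcal{B}$ is optimal. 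Multiplicities are unchanged, so autocorrelation stays at most $3$.

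The second family covers the differences not in $v\mathbb{Z}_{vw}$, that is, elements $d$ with $d\not\equiv 0\pmod w$. For each block $xB_j\in\mathcal{A}$ and each $t\in\mathbb{Z}_w$, take
\[
C_{x,t}=\{\tilde{x}\,\beta+ v t\beta \mid \beta\in B_j\}=(\tilde x+vt)\cdot B_j,
\]
where $\tilde x\in\mathbb{Z}_{vw}$ is a fixed lift of $x$. Then $\bar\Delta C_{x,t}=\pm(\tilde x+vt)\{1,2,3,4,6\}$ or $\pm(\tilde x+vt)\{1,\dots,5\}$. Reducing modulo $v$ gives $\bar\Delta(xB_j)$. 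For a fixed difference $e=\pm c(\tilde x+vt)$ with $c\in\{1,\dots,6\}$, its residue modulo $v$ determines $x$ and $c$ uniquely, by perfectness and difference-disjointness of $\mathcal{A}$. Its residue modulo $w$ is $c(\tilde x+vt)\bmod w$. Since $\gcd(v,w)=1$, as $t$ ranges over $\mathbb{Z}_w$, $\tilde x+vt$ ranges over all residues modulo $w$. Since $\gcd(w,30)=1$, every $c\le 6$ (and $\pm1$) is invertible modulo $w$, so $ct$ determines $t$. By the CRT, the differences of the second family are pairwise distinct and cover exactly the set $\{d\in\mathbb{Z}_{vw}: d\bmod v\neq 0\}$. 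This is the complement of the first family's differences together with $0$.

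One check remains: each $C_{x,t}$ must have autocorrelation at most $3$, i.e. its differences $\pm(\tilde x+vt)c$ must be distinct. This already holds modulo $v$ because $xB_j$ has type $10$, so the multiplicities are exactly those of $\Delta B_j$. Those multiplicities are at most $3$ by Proposition~\ref{prop:tipo10}.

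Counting: the code has $|\mathcal{A}|w+|\mathcal{B}|=\frac{v-1}{10}w+|\mathcal{B}|$ blocks, and $\Delta\mathcal{C}$ misses only $0$ together with the $v$-multiples missed by $\mathcal{B}$. In the perfect case this gives $\frac{vw-1}{10}$ blocks, so $\mathcal{C}$ is perfect. In the optimal case, we compare $\frac{(v-1)w}{10}+s_w$ with the bound of Theorem~\ref{thm:bound} for $vw$. We have $v\equiv1\pmod{10}$ and $w$ coprime to $30$, so $vw\equiv w\pmod{10}$, and $vw$ is odd and not divisible by $9$ or $10$ exactly when $w$ is not. Hence the exceptional classes of Theorem~\ref{thm:bound} apply to $vw$ iff they apply to $w$; the only one possible for odd $w$ is $9\pmod{90}$, which is excluded since $3\nmid w$. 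So the bound is $\lfloor vw/10\rfloor=\frac{(v-1)w}{10}+\lfloor w/10\rfloor$, matching the size of $\mathcal{C}$.

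The main obstacle is the disjointness of the second family, in particular making sure no collision $c(\tilde x+vt)=-c'(\tilde x'+vt')$ arises. We handle this by noting the residue modulo $v$ already forces $x=x'$, $c=c'$ and the sign, after which invertibility of $c$ modulo $w$ (the role of $\gcd(w,30)=1$) forces $t=t'$.
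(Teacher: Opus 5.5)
Your construction is exactly the paper's: the blocks $(\tilde x+vt)\cdot B_j$ for $t\in\mathbb{Z}_w$, together with $v\cdot\mathcal{B}$. You should also delete the false start with $w\cdot\mathcal{B}'$.

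\textbf{Main gap.} It is in the covering and disjointness step for the second family, where you write ``Since $\gcd(v,w)=1$'' and then invoke the CRT. Coprimality of $v$ and $w$ is not a hypothesis; only $\gcd(w,30)=1$ is assumed. The paper's own application takes $v=w=11$ to produce a perfect $(121,5,3,1)$-OOC. There $\tilde x+vt\equiv\tilde x\pmod w$ for every $t$, so the residue modulo $w$ carries no information about $t$, and $\mathbb{Z}_{vw}\not\cong\mathbb{Z}_v\times\mathbb{Z}_w$. As written, your argument proves the proposition only when $\gcd(v,w)=1$.

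The repair, which is the paper's argument, works with $H=v\mathbb{Z}_{vw}$, the kernel of reduction modulo $v$, and $H\cong\mathbb{Z}_w$ via $t\mapsto vt$.
\begin{itemize}
\item For $d\notin H$, the residue $d\bmod v=\epsilon cx$ fixes the block, the sign $\epsilon$ and $c$, exactly as you say.
\item Then $d-\epsilon c\tilde x=vs\in H$ for a unique $s\in\mathbb{Z}_w$.
\item Now $d=\epsilon c(\tilde x+vt)$ holds iff $v(\epsilon ct-s)=0$ in $\mathbb{Z}_{vw}$, i.e.\ iff $\epsilon ct\equiv s\pmod w$.
\item This congruence has exactly one solution $t$, because $\gcd(c,w)=1$.
\end{itemize}
So every element of $\mathbb{Z}_{vw}\setminus H$ occurs as a difference of the second family exactly once, with no condition on $\gcd(v,w)$.

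\textbf{Smaller point (optimal case).} You assert that $vw$ is not divisible by $9$ ``exactly when $w$ is not.'' Since $3\nmid w$, this amounts to $9\nmid v$, which you have not justified. It matters: if $9\mid v$ and $w\equiv 9\pmod{10}$, then $vw\equiv 9\pmod{90}$, the bound is $\lceil vw/10\rceil$, and your code would be one block short.

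The claim does hold, for the following reason.
\begin{itemize}
\item We have $v\equiv 1\pmod{10}$, and each block has $ord(x)\ge 11$ with $\gcd(ord(x),10)=1$.
\item Hence for $c\in\{1,\dots,6\}$ one has $\gcd(c,ord(x))\in\{1,3\}$, so $ord(cx)\ge ord(x)/3>3$.
\item So if $3\mid v$, the element $v/3$ would never be a difference, contradicting perfectness.
\end{itemize}
Thus $3\nmid v$, and the rest of your counting goes through. The paper itself leaves this point to ``it can be verified.''
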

    \begin{proof}
       Let $\mathcal{C}_1$ be a perfect $(v,5,3,1)$-OOC as in the statement and let $\mathcal{C}_2$ be a perfect (respectively, optimal) $(w,5,3,1)$-OOC with $\gcd(w,30)=1$. 
      
       For any $C\in \mathcal{C}_1$ and for any $y\in \mathbb{Z}_w$ set
$$C_y=\begin{cases}
    \{0,x+vy,-x-vy,3x+3vy,-3x-3vy\}\,&{\rm if }\,C=\{0,x,-x,3x,-3x\}, \\
    \{0,x+vy,2x+2vy,-2x-2vy,3x+3vy\}\, &{\rm if }\, C=\{0,x,2x,-2x,3x\},\\
    \{0,x+vy,2x+2vy,4x+4vy,5x+5vy\}\, &{\rm if } \,C=\{0,x,2x,4x,5x\}.
\end{cases}$$
Note that the support of $C_y$ is:
$$\bar\Delta C_y=\pm\{x+vy,2x+2vy,3x+3vy,4x+4vy, 6x+6vy\}$$ if $C=\{0,x,-x,3x,-3x\}$, while we have
$$\bar\Delta C_y=\pm\{x+vy,2x+2vy,3x+3vy,4x+4vy, 5x+5vy\}$$
if $C=\{0,x,2x,-2x,3x\}$ or $C=\{0,x,2x,4x,5x\}$.
In all cases, the reader can verify that no element occurs more than three times in the multiset $\Delta C_y$.

Now set:
$$\mathcal{C}_3=\{C_y \mid C\in \mathcal{C}_1, y\in \mathbb{Z}_w\} \quad
\textrm{and}\quad
\mathcal{C}_4=\{vB \mid B\in \mathcal{C}_2\}.$$
It can be verified that $\mathcal{C}=\mathcal{C}_3 \cup \mathcal{C}_4$ is a perfect (respectively, optimal) $(vw,5,3,1)$-OOC. We have already observed that every element in the support of $C_y$ occurs with multiplicity at most $3$ in $\Delta C_y$, for every $C_y \in \mathcal{C}_3$. Further, since $\mathcal{C}_4$ is obtained by embedding $\mathcal{C}_2$ into $\mathbb{Z}_{vw}$, it is immediate that for any $B \in \mathcal{C}_2$, any element in the support of $vB$ occurs at most $3$ times in $\Delta(vB)$.

Now we prove that for any two sets $X,Y \in \mathcal{C}$, $\Delta X \cap \Delta Y = \emptyset$. Obviously, ${\rm H} = \langle{v}\rangle = \{0,v,2v,\ldots,(w-1)v\}$ is a subgroup of $\mathbb{Z}_{vw}$. Note that each element of $vB \subset {\rm H}$, and moreover, observe that the supports of the blocks of $\mathcal{C}_4 = \{vB:B \in \mathcal{C}_2\}$ are disjoint.

For each $C_y \in \mathcal{C}_3$, we next show that $\Delta C_y \cap {\rm H} = \emptyset$. Every difference in $\Delta C_y$ takes the form $i(x+vy)$, where $i$ is determined by the block type. It is immediate that $ivy \in {\rm H}$, therefore $i(x+vy)$ is in a non-trivial coset of H if and only if $ix \not\equiv 0 \pmod v$ for every $i$. Since $v \nmid ix$ (this comes from the fact that $ix \in \mathbb{Z}_v^*$), it is immediate that the supports of $\mathcal{C}_3$ are disjoint from the blocks of $\mathcal{C}_4$.

It remains to prove that $\Delta C_y \cap \Delta C_{y'} = \emptyset$ for distinct blocks $C_y, C_{y'} \in \mathcal{C}_3$. This can be achieved by establishing that $$i(x+vy) \not\equiv \pm j(x'+vy') \pmod {vw}$$ for all $x,x' \in \mathbb{Z}_v$, $y,y' \in \mathbb{Z}_w$ and $i,j$ determined by the block type. In the case where $ix = jx'$, the differences are only the same if $vy = vy'$, since the elements of the subgroup ${\rm H} = \{vy \mid y \in \mathbb{Z}_w\}$, this happens only if $y = y' \in \mathbb{Z}_w$. When $ix \neq jx'$, then $$i(x+vy) \equiv \pm j(x'+vy') \pmod {vw}$$ if and only $ix$ and $jx'$ lie in the same coset of ${\rm H}$. Since each $C \in \mathcal{C}_1$ guarantees that $ix \neq \pm jx' \pmod v$, it follows that the elements $ix$ and $jx'$ must lie in distinct cosets of ${\rm H}$.
\end{proof}

As an immediate consequence of previous proposition we have the following:
    \begin{corollary}
    If there exist a perfect $(v,5,3,1)$-OOC with all the blocks of the form $\{0,x,-x,3x,-3x\}$ and a perfect (respectively, optimal) $(w,5,3,1)$-OOC with $\gcd(w,6)=1$, then there exists a perfect (respectively, optimal) $(vw,5,3,1)$-OOC.
\end{corollary}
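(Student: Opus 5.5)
This corollary is a direct specialisation of Proposition \ref{prop:recursive}, so the plan is to check that its hypotheses hold. The hypothesis on the first code is immediate: a perfect $(v,5,3,1)$-OOC whose blocks are all of the form $\{0,x,-x,3x,-3x\}$ is one of the codes allowed there, since that proposition permits blocks of this form ``and/or'' the other two forms. The real work is weakening the coprimality requirement from $\gcd(w,30)=1$ to $\gcd(w,6)=1$.

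\textbf{Where $\gcd(w,30)=1$ is used.} In the proof of Proposition \ref{prop:recursive}, coprimality of $w$ enters only through the multipliers $i$ in the differences $i(x+vy)$ of the blocks $C_y$. Two facts are needed. First, multiplication by $i$ must act bijectively on $\mathbb{Z}_w$, equivalently on ${\rm H}=\langle v\rangle$. Second, the condition $v\nmid ix$ must lift correctly: if $ix\not\equiv 0 \pmod v$, then $i(x+vy)\notin {\rm H}$. For blocks of type $\{0,x,-x,3x,-3x\}$, the support is $\pm\{x,2x,3x,4x,6x\}$, so $i\in\{1,2,3,4,6\}$, and the only primes dividing some $i$ are $2$ and $3$. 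The prime $5$ arises only from the multiplier $5$ in the supports $\pm\{x,\dots,5x\}$ of the other two block types, and these types are now absent.

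\textbf{Re-running the argument.} I will therefore go through the proof of Proposition \ref{prop:recursive} restricted to blocks of the first type.

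First, for each $C_y$, $\mu(C_y)\le 3$ and $|\bar\Delta C_y|=10$. This follows from Proposition \ref{prop:tipo10}(a) once we know $ord(x+vy)\ge 11$ and $\ne 12$ in $\mathbb{Z}_{vw}$. The image of $x+vy$ in $\mathbb{Z}_v$ is $x$, and $ord(x)$ already satisfies these conditions. The order in $\mathbb{Z}_{vw}$ is a multiple of $ord(x)$, with the extra factor coprime to $6$. Hence it is $\ge 11$, and it equals $12$ only if $ord(x)=12$, which is excluded.

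Second, $\Delta C_y\cap {\rm H}=\emptyset$, because $ix\not\equiv 0\pmod v$ for $i\in\{1,2,3,4,6\}$.

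Third, distinct blocks of $\mathcal{C}_3$ have disjoint differences. Suppose $i(x+vy)=\pm j(x'+vy')$. Reducing mod $v$ gives $ix=\pm jx'$ in $\mathbb{Z}_v$, which by perfection of $\mathcal{C}_1$ forces the same block and $i=j$ with the same sign. Then $ivy=ivy'$ in ${\rm H}\cong\mathbb{Z}_w$, and since $\gcd(i,w)=1$ this gives $y=y'$.

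Finally, the blocks $vB$ of $\mathcal{C}_4$ cover ${\rm H}\setminus\{0\}$, perfectly or optimally as $\mathcal{C}_2$ does.

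\textbf{Counting and the main obstacle.} The size is $\frac{v-1}{10}\,w+|\mathcal{C}_2|$. In the perfect case this equals $\frac{vw-1}{10}$; in the optimal case it equals $\frac{(v-1)w}{10}+\lfloor w/10\rfloor$, matching the bound of Theorem \ref{thm:bound}, exactly as in the proposition. The main obstacle I expect is the third step: making sure the coset argument in $\mathbb{Z}_{vw}$ genuinely requires only invertibility of $i\in\{2,3,4,6\}$ modulo $w$, since $\mathbb{Z}_{vw}$ need not split as $\mathbb{Z}_v\times\mathbb{Z}_w$ when $\gcd(v,w)>1$. I will handle this by working only with the reduction map $\mathbb{Z}_{vw}\to\mathbb{Z}_v$, whose kernel is ${\rm H}$, and never with a direct-product decomposition.
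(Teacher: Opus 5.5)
Your route is the paper's: the corollary is Proposition~\ref{prop:recursive} re-run with multipliers $i\in\{1,2,3,4,6\}$. You correctly see that the prime $5$ is needed only for the multiplier $5$ of the other two block shapes, which is the justification the paper leaves implicit when it calls the corollary immediate.

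\textbf{The gap.} Your first and third steps, and your block count $|\mathcal{C}_1|=\frac{v-1}{10}$, all rest on an unstated assumption: every block of $\mathcal{C}_1$ has type $10$, i.e.\ the ten elements $\pm ix$, $i\in\{1,2,3,4,6\}$, are distinct in $\mathbb{Z}_v$. Perfection does not give this. Difference-disjointness only forces two equal differences to come from the same block; it does not force $i=j$.

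A concrete counterexample is $v=9$, $\mathcal{C}_1=\{\{0,1,-1,3,-3\}\}=\{\{0,1,3,6,8\}\}$.
\begin{itemize}
\item This block has $\mu=3$, type $8$ and $\bar\Delta C=\mathbb{Z}_9^*$. Since $9\equiv 9\pmod{90}$, the bound of Theorem~\ref{thm:bound} is $\lceil 9/10\rceil=1$, so $\mathcal{C}_1$ is perfect and satisfies the corollary's hypothesis.
\item Your first step fails, since $ord(1)=9$.
\item Your third step fails, since $3\cdot 1=-6\cdot 1$ in $\mathbb{Z}_9$ with $i\neq j$.
\end{itemize}
With $w=11$ the construction genuinely breaks down. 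In $\mathbb{Z}_{99}$ the blocks $C_0=\{0,1,98,3,96\}$ and $C_9=\{0,82,17,48,51\}$ both have $3$ as a difference. In any case the construction outputs $12$ blocks, while the bound for $99\equiv 9\pmod{90}$ is $10$. The paper's proof of Proposition~\ref{prop:recursive} makes the same tacit assumption in its claim $ix\neq\pm jx'\pmod v$, so the statement as written needs an extra hypothesis, not just your argument.

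\textbf{The repair.} Assume $v\equiv 1\pmod{10}$; this is the setting of the cyclotomic codes for primes $p\equiv 1\pmod{10}$. Blocks of the shape $\{0,x,-x,3x,-3x\}$ have type at most $10$. Perfection then gives $\sum\delta_i=v-1=10|\mathcal{C}_1|$, so every block has type exactly $10$, and your three steps go through verbatim.

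\textbf{A second omission.} In the optimal case you should check, rather than cite ``exactly as in the proposition'', that the count meets the bound.
\begin{itemize}
\item Since $\gcd(w,6)=1$, $w$ lies in none of the exceptional classes of Theorem~\ref{thm:bound}, so $|\mathcal{C}_2|=\lfloor w/10\rfloor$.
\item $vw$ is odd, so it is exceptional only if $vw\equiv 9\pmod{90}$, which would require $9\mid v$.
\item This is impossible because $3\nmid v$. An element of order $3$ in $\mathbb{Z}_v$ would have to equal $\pm ix$ with $i\in\{1,2,3,4,6\}$ and $x$ of odd order at least $11$. But $ord(ix)=3$ forces $ord(x)\in\{3,9\}$.
\end{itemize}
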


    \begin{example}
Since $\mathcal{C}_1=\{\{0,1,-1,3,-3\}\}$ is a perfect $(11,5,3,1)$-OOC, if we take  $\mathcal{C}_1=\mathcal{C}_2$,
following the proof of the previous proposition, we get 
 $\mathcal{C}_3=\{\{0, 12, 109, 36, 85\},$ $\{0, 23, 98, 69, 52\},$  $ \{0, 34, 87, 102, 19\}, \{0, 45, 76, 14, 107\},$ $ \{0, 56, 65, 47, 74\}, \{0, 67, 54, 80, 41\},$ $\{0, 78, 43, 113, 8\},$ $\{0, 89, 32, 25, 96\}, \{0, 100, 21, 58, 63\},$ $\{0, 111, 10, 91, 30\}, 
\{0, 1, 120, 3, 118\}\}$ and $\mathcal{C}_4=\{\{0, 11, 110, 33, 88\}\}.$
Then $\mathcal{C}_3 \cup \mathcal{C}_4$ is a perfect $(121,5,3,1)$-OOC.

Note that $\{\{0,1,-1,3,-3\}\}$ is also an optimal $(13,5,3,1)$-OOC, hence $\mathcal{C}_3 \cup \mathcal{C}_4$ is also an optimal $(143,5,3,1)$-OOC.
\end{example}

\begin{example}
As already remarked $\mathcal{C}_1=\{\{0,1,-1,3,-3\}\}$ is a perfect $(11,5,3,1)$-OOC. Additionally, we take $$\mathcal{C}_2= \{
\{0, 1, 2, 3, 26 \},
\{0, 9, 19, 29, 38 \},
\{0, 5, 12, 17, 33 \},
\{0, 4, 8, 22, 35 \}
 \}$$ an optimal $(49,5,3,1)$-OOC obtained from \cite{BT16}.\\ Following the proof of Proposition \ref{prop:recursive}, we get 

\begin{center}
 \begin{small}
\begin{tabular}{|llll|}\hline
\multicolumn{4}{|c|}{$\mathcal{C}_3$} \\ \hline
$\{ 0, 1, 3, 536, 538 \},$&
$\{ 0, 12, 36, 503, 527 \},$&
$\{ 0, 23, 69, 470, 516 \},$&
$\{ 0, 34, 102, 437, 505 \}$ \\
$\{ 0, 45, 135, 404, 494 \}$,&
$\{ 0, 56, 168, 371, 483 \}$,&
$\{ 0, 67, 201, 338, 472 \}$,&
$\{ 0, 78, 234, 305, 461 \}$, \\
$\{ 0, 89, 267, 272, 450 \}$,&
$\{ 0, 100, 239, 300, 439 \}$,&
$\{ 0, 111, 206, 333, 428 \}$,&
$\{ 0, 122, 173, 366, 417 \}$, \\
$\{ 0, 133, 140, 399, 406 \}$,&
$\{ 0, 107, 144, 395, 432 \}$,&
$\{ 0, 74, 155, 384, 465 \}$,&
$\{ 0, 41, 166, 373, 498 \}$, \\
$\{ 0, 8, 177, 362, 531 \}$,&
$\{ 0, 25, 188, 351, 514 \}$,&
$\{ 0, 58, 199, 340, 481 \}$,&
$\{ 0, 91, 210, 329, 448 \}$, \\
$\{ 0, 124, 221, 318, 415 \}$,&
$\{ 0, 157, 232, 307, 382 \}$,&
$\{ 0, 190, 243, 296, 349 \}$,&
$\{ 0, 223, 254, 285, 316 \}$, \\
$\{ 0, 256, 265, 274, 283 \}$,&
$\{ 0, 250, 263, 276, 289 \}$,&
$\{ 0, 217, 252, 287, 322 \}$,&
$\{ 0, 184, 241, 298, 355 \}$, \\
$\{ 0, 151, 230, 309, 388 \}$,&
$\{ 0, 118, 219, 320, 421 \}$,&
$\{ 0, 85, 208, 331, 454 \}$,&
$\{ 0, 52, 197, 342, 487 \}$, \\
$\{ 0, 19, 186, 353, 520 \}$,&
$\{ 0, 14, 175, 364, 525 \}$,&
$\{ 0, 47, 164, 375, 492 \}$,&
$\{ 0, 80, 153, 386, 459 \}$, \\
$\{ 0, 113, 142, 397, 426 \}$,&
$\{ 0, 131, 146, 393, 408 \}$,&
$\{ 0, 120, 179, 360, 419 \}$,&
$\{ 0, 109, 212, 327, 430 \}$, \\
$\{ 0, 98, 245, 294, 441 \}$,&
$\{ 0, 87, 261, 278, 452 \}$,&
$\{ 0, 76, 228, 311, 463 \}$,&
$\{ 0, 65, 195, 344, 474 \}$, \\
$\{ 0, 54, 162, 377, 485 \}$,&
$\{ 0, 43, 129, 410, 496 \}$,&
$\{ 0, 32, 96, 443, 507 \}$,& 
$\{ 0, 21, 63, 476, 518 \}$, \\
$\{ 0, 10, 30, 509, 529 \}$. &&& \\
\hline
\end{tabular}
\end{small}
\end{center}

and $$\mathcal{C}_4= \{
\{0, 11, 22, 33, 286 \},
\{0, 99, 209, 319, 418 \},
\{0, 55, 132, 187, 363 \},\\
\{0, 44, 88, 242, 385 \}
\}.$$
Then $\mathcal{C}_3 \cup \mathcal{C}_4$ is an optimal $(539,5,3,1)$-OOC.
\end{example}

\begin{example}
Take again $\mathcal{C}_1=\{\{0,1,-1,3,-3\}\}$ as a perfect $(11,5,3,1)$-OOC. Additionally, we take $$\mathcal{C}_2= \{
\{ 0, 1, 2, 3, 5 \},
\{ 0, 10, 20, 30, 50 \},
\{ 0, 12, 24, 36, 60 \},
\{ 0, 8, 31, 39, 62 \},$$
$$\{ 0, 6, 19, 32, 64 \},
\{ 0, 9, 18, 52, 61 \},
\{ 0, 7, 14, 21, 35 \}
 \}$$ an optimal $(77,5,3,1)$-OOC obtained from \cite{BT16}. Following the proof of Proposition \ref{prop:recursive}, we get

\begin{center}
\begin{small}
\begin{tabular}{|llll|}\hline
\multicolumn{4}{|c|}{$\mathcal{C}_3$} \\ \hline
$\{ 0, 1, 3, 844, 846 \}$,&
$\{ 0, 12, 36, 811, 835 \}$,&
$\{ 0, 23, 69, 778, 824 \}$,&
$\{ 0, 34, 102, 745, 813 \}$, \\ 
$\{ 0, 45, 135, 712, 802 \}$,&
$\{ 0, 56, 168, 679, 791 \}$,&
$\{ 0, 67, 201, 646, 780 \}$,&
$\{ 0, 78, 234, 613, 769 \}$, \\
$\{ 0, 89, 267, 580, 758 \}$,&
$\{ 0, 100, 300, 547, 747 \}$,&
$\{ 0, 111, 333, 514, 736 \}$,&
$\{ 0, 122, 366, 481, 725 \}$, \\
$\{ 0, 133, 399, 448, 714 \}$,&
$\{ 0, 144, 415, 432, 703 \}$,&
$\{ 0, 155, 382, 465, 692 \}$,&
$\{ 0, 166, 349, 498, 681 \}$, \\
$\{ 0, 177, 316, 531, 670 \}$,&
$\{ 0, 188, 283, 564, 659 \}$,&
$\{ 0, 199, 250, 597, 648 \}$,&
$\{ 0, 210, 217, 630, 637 \}$, \\
$\{ 0, 184, 221, 626, 663 \}$,&
$\{ 0, 151, 232, 615, 696 \}$,&
$\{ 0, 118, 243, 604, 729 \}$,&
$\{ 0, 85, 254, 593, 762 \}$, \\
$\{ 0, 52, 265, 582, 795 \}$,&
$\{ 0, 19, 276, 571, 828 \}$,&
$\{ 0, 14, 287, 560, 833 \}$,&
$\{ 0, 47, 298, 549, 800 \}$, \\
$\{ 0, 80, 309, 538, 767 \}$,&
$\{ 0, 113, 320, 527, 734 \}$,&
$\{ 0, 146, 331, 516, 701 \}$,&
$\{ 0, 179, 342, 505, 668 \}$, \\
$\{ 0, 212, 353, 494, 635 \}$,&
$\{ 0, 245, 364, 483, 602 \}$,&
$\{ 0, 278, 375, 472, 569 \}$,&
$\{ 0, 311, 386, 461, 536 \}$, \\
$\{ 0, 344, 397, 450, 503 \}$,&
$\{ 0, 377, 408, 439, 470 \}$,&
$\{ 0, 410, 419, 428, 437 \}$,&
$\{ 0, 404, 417, 430, 443 \}$, \\
$\{ 0, 371, 406, 441, 476 \}$,&
$\{ 0, 338, 395, 452, 509 \}$,&
$\{ 0, 305, 384, 463, 542 \}$,&
$\{ 0, 272, 373, 474, 575 \}$, \\
$\{ 0, 239, 362, 485, 608 \}$,&
$\{ 0, 206, 351, 496, 641 \}$,&
$\{ 0, 173, 340, 507, 674 \}$,&
$\{ 0, 140, 329, 518, 707 \}$, \\
$\{ 0, 107, 318, 529, 740 \}$,&
$\{ 0, 74, 307, 540, 773 \}$,&
$\{ 0, 41, 296, 551, 806 \}$,&
$\{ 0, 8, 285, 562, 839 \}$, \\
$\{ 0, 25, 274, 573, 822 \}$,&
$\{ 0, 58, 263, 584, 789 \}$,&
$\{ 0, 91, 252, 595, 756 \}$,&
$\{ 0, 124, 241, 606, 723 \}$, \\
$\{ 0, 157, 230, 617, 690 \}$,&
$\{ 0, 190, 219, 628, 657 \}$,&
$\{ 0, 208, 223, 624, 639 \}$,&
$\{ 0, 197, 256, 591, 650 \}$, \\
$\{ 0, 186, 289, 558, 661 \}$,&
$\{ 0, 175, 322, 525, 672 \}$,&
$\{ 0, 164, 355, 492, 683 \}$,&
$\{ 0, 153, 388, 459, 694 \}$, \\
$\{ 0, 142, 421, 426, 705 \}$,&
$\{ 0, 131, 393, 454, 716 \}$,&
$\{ 0, 120, 360, 487, 727 \}$,&
$\{ 0, 109, 327, 520, 738 \}$, \\
$\{ 0, 98, 294, 553, 749 \}$,&
$\{ 0, 87, 261, 586, 760 \}$,&
$\{ 0, 76, 228, 619, 771 \}$,&
$\{ 0, 65, 195, 652, 782 \}$, \\
$\{ 0, 54, 162, 685, 793 \}$,&
$\{ 0, 43, 129, 718, 804 \}$,&
$\{ 0, 32, 96, 751, 815 \}$,&
$\{ 0, 21, 63, 784, 826 \}$, \\
$\{ 0, 10, 30, 817, 837 \}$. &&& \\
\hline
\end{tabular}
\end{small}
\end{center}

and $$\mathcal{C}_4= \{
\{ 0, 11, 22, 33, 55 \},
\{ 0, 110, 220, 330, 550 \},
\{ 0, 132, 264, 396, 660 \},
\{ 0, 88, 341, 429, 682 \},$$
$$\{ 0, 66, 209, 352, 704 \},
\{ 0, 99, 198, 572, 671 \},
\{ 0, 77, 154, 231, 385 \}
\}.$$
Then $\mathcal{C}_3 \cup \mathcal{C}_4$ is an optimal $(847,5,3,1)$-OOC.
\end{example}
 \begin{remark}
    Note that a class of perfect $(v,5,3,1)$-OOCs with all the blocks of the form $\{0,x,-x,3x,-3x\}$ has been constructed in Theorem \ref{theorem: generalised cycle conditions}, while a class of perfect $(v,5,3,1)$-OOCs with all the blocks of the form $\{0,x,2x,-2x,3x\}$ has been constructed in Theorem \ref{theorem: generalised bull conditions}. This ensures that Proposition \ref{prop:recursive} gives an infinite family of new codes.
\end{remark} 

\subsection*{Acknowledgments}
This work arose from the workshop on ``Identifying constructions of Optical Orthogonal codes in which the weight of the sequence is not fixed" funded by the Heilbronn Focused Research Grant. \\
We would like to thank Sophie Huczynska and Siaw-Lynn Ng for their helpful discussions and contributions to the early results presented in this paper.

\end{document}